\documentclass[12pt]{amsart}

\voffset=-1.4mm
\oddsidemargin=17pt \evensidemargin=17pt
\headheight=9pt     \topmargin=26pt
\textheight=576pt   \textwidth=440.8pt
\parskip=0pt plus 4pt

\usepackage{amsmath}
\usepackage{amssymb}
\usepackage{amsthm}
\usepackage{bm}
\usepackage{mathrsfs}
\usepackage{hyperref}
\usepackage{graphicx}
\usepackage{xcolor}
\usepackage{enumitem}
\setlist[enumerate]{parsep=0pt plus 4pt,topsep=0pt plus 4pt}
\usepackage{url}
\usepackage{mathtools}
\allowdisplaybreaks
\usepackage{epsfig}
\usepackage{xspace}

\newcommand\excise[1]{}

\newtheorem{thm}{Theorem}[section]
\newtheorem{lemma}[thm]{Lemma}
\newtheorem{prop}[thm]{Proposition}

\newtheorem{cor}[thm]{Corollary}

\newtheorem*{claim*}{Claim}
\theoremstyle{definition}
\newtheorem{defn}[thm]{Definition}
\newtheorem{remark}[thm]{Remark}
\newtheorem{example}[thm]{Example}

\newtheorem{hyp}[thm]{Hypotheses}

\numberwithin{equation}{section}




\newcounter{separated-sec}
\newcounter{collapse-sec}

\newcommand{\Ring}[1]{\ensuremath{\mathbb{#1}}}

\renewcommand\>{\rangle}
\newcommand\<{\langle}

\newcommand\II{\mathbb{I}}
\newcommand\JJ{\mathbb{J}}
\newcommand\KK{\mathcal{K}}
\newcommand\LL{\mathcal{L}}
\newcommand\MM{\mathcal{M}}

\newcommand\OO{\mathcal{O}}
\newcommand\RR{\Ring{R}}
\newcommand\TT{T}
\newcommand\XX{\mathcal{X}}

\newcommand\dd{\mathbf{d}}
\newcommand\ee{\mathbf{e}}
\newcommand\pp{\mathfrak{p}}

\newcommand\bP{\mathbf{P}}

\newcommand\cS{\mathcal{S}}

\newcommand\fc{C}

\newcommand\oC{{\hspace{.2ex}\overline{\hspace{-.2ex}C}\hspace{.2ex}}}

\newcommand\tM{\MM}

\newcommand\ve{\varepsilon}
\newcommand\vT{\smash{\makebox[0pt][l]{$T$}%
                \raisebox{1.4ex}{\scalebox{1.1}[.5]{$^{\rightarrow\!\!}$}}}}%

\newcommand\bmu{{\bar\mu}}
\newcommand\bnu{{\bar\nu}}
\newcommand\hmu{{\widehat\mu}}

\newcommand\Ci{C_\infty}



\newcommand\Ti{T_\infty}

\newcommand\Cmu{C_{\hspace{-.2ex}\mu}}
\newcommand\Emu{E_\mu}

\newcommand\Fmu{F}

\newcommand\Smu{S_\bmu\MM}
\newcommand\SpM{S_p\MM}
\newcommand\TZM{\vT_Z\MM}
\newcommand\TZR{\vT_Z R}
\newcommand\TZX{\vT_Z\XX}
\newcommand\Tmu{T_\bmu\MM}
\newcommand\TpM{T_p\MM}
\newcommand\bPi{\bP_\infty}


\newcommand\dis{\displaystyle}
\newcommand\muL{\mu^\LL}

\newcommand\Chmu{C_{\hspace{-.2ex}\hmu}}
\newcommand\Cmuz{C_{\hspace{-.2ex}\mu_Z}}
\newcommand\Ehmu{E_{\hspace{.1ex}\hmu}}
\newcommand\Emuz{E_{\mu_Z}}



\newcommand\Tmux{T_\bmu\XX}
\newcommand\bnui{\bnu_\infty}

\newcommand\oCmu{\oC_{\hspace{-.45ex}\mu}}



\newcommand\nablaO{\nabla_{\!\OO}}
\newcommand\nablaq{\nabla_{\hspace{-.25ex}q\hspace{.25ex}}}

\newcommand\nablabmu{\nabla_{\hspace{-.25ex}\bmu}}

\renewcommand\implies{\Rightarrow}


\newcommand\wh[1]{{\widehat{#1}}}

\newcommand\Cni[1]{C_{\nu_{#1}}}
\newcommand\interior[1]{{\kern0pt#1}^{\mathrm{o}}}


\DeclareMathOperator*\argmin{argmin}
\DeclareMathOperator\CAT{CAT}

\DeclareMathOperator\hull{hull}
\DeclareMathOperator\supp{supp}
\DeclareMathOperator\codim{codim}

\DeclarePairedDelimiter\abs{\lvert}{\rvert}
\DeclarePairedDelimiter\norm{\lVert}{\rVert}



\setlength\marginparwidth{3cm}
\setlength\marginparsep{1mm}
\definecolor{teal}{HTML}{029386}
%

\begin{document}

\mbox{}\vspace{-4.5ex}
\title[Geometry of measures on smoothly stratified metric spaces]%
{Geometry of measures on\\smoothly stratified metric spaces}
\author{Jonathan C. Mattingly}
\address{\rm Departments of Mathematics and of Statistical Sciences,
Duke University, Durham, NC 27708}
\urladdr{\url{https://scholars.duke.edu/person/jonathan.mattingly}}
\author{Ezra Miller}
\address{\rm Departments of Mathematics and of Statistical Sciences,
Duke University, Durham, NC 27708}
\urladdr{\url{https://scholars.duke.edu/person/ezra.miller}}
\author[Do Tran]{Do Tran}%
\address{\rm Georg-August Universit\"at at G\"ottingen, Germany}

\makeatletter
  \@namedef{subjclassname@2020}{\textup{2020} Mathematics Subject Classification}
\makeatother
\subjclass[2020]{Primary: 60D05, 53C23, 28C99, 57N80, 58A35, 62G20,
49J52, 62R20, 62R07, 53C80, 58Z05, 60B05, 62R30;
Secondary: 60F05, 58K30, 57R57, 92B10}
%
%

\date{11 November 2023}

\begin{abstract}
Any measure~$\mu$ on a $\CAT(\kappa)$ space~$\MM$ that is stratified
as a finite union of manifolds and has local exponential maps near the
Fr\'echet mean~$\bmu$ yields a continuous \emph{tangential collapse}
$\LL: \Tmu \to \RR^m$ from the tangent cone of~$\MM$ at~$\bmu$ to a
vector space that preserves the Fr\'echet mean, restricts to an
isometry on the \emph{fluctuating cone} of directions in which the
Fr\'echet mean can vary under perturbation of~$\mu$, and preserves
angles between arbitrary and fluctuating tangent vectors at the
Fr\'echet mean.
\vspace{-4ex}
\end{abstract}

\maketitle
\tableofcontents
\vspace{-4.73ex}

\section*{Introduction}\label{s:intro}

\noindent
With the increasing recognition of singular spaces as sample spaces in
geometric statistics (see \cite{shadow-geom} and references listed
there), it becomes important to understand how the geometry of
singular spaces interacts with measures on them.  The most basic
questions in this context concern how to minimize
\begin{equation}\label{eq:FrechetF}
  F_\mu(p) = \frac12\int_\MM d(p,x)^2\,\mu(dx),
\end{equation}
the \emph{Fr\'echet function} of the given population measure~$\mu$ on
a singular space~$\MM$.  More generally, the intent is to carry out
statistical analysis with measures on singular spaces, including
estimators, summaries, confidence regions, and so on, based on
mathematical foundations like laws of large numbers and central limit
theorems.  This endeavor first requires the identification of spaces
with enough structure to carry through the relevant probability theory
while allowing maximal freedom regarding the types of singularities.
The goal here is therefore to identify a class of such spaces and
prove that their local properties around the Fr\'echet mean of a
measure~$\mu$ grant enough control to get a handle on variation of
Fr\'echet means upon perturbation of~$\mu$.

Spaces with curvature bounded above by~$\kappa$ in the sense of
Alexandrov, also called $\CAT(\kappa)$ spaces, always admit local
logarithm maps because they locally admit unique length-minimizing
geodesics (see \cite[Proposition~\textrm{II}.1.4]{bridson2013metric},
for example).  To suit the purposes of geometric probability, one
additional requirement in Definition~\ref{d:stratified-space} is that
this logarithm be locally invertible nearby~$\bmu$, which allows
information to be transferred from~$\MM$ to its tangent cone and back.
The other requirement, designed to control the singularities directly,
is that~$\MM$ be a~finite union of manifold strata, which seems
reasonable in statistical contexts, as the spaces involved are usually
semialgebraic
(and thus can be triangulated \cite[\S II.2]{shiota97}) or are
explicitly described in terms of manifolds, such as from methods based
on manifold learning.
\enlargethispage{.75ex}%
(Manifold stratification is automatic for spaces with curvature
bounded below \cite{perelman1994}, but not above.)

The resulting \emph{smoothly stratified metric spaces} introduced in
Section~\ref{s:strat-spaces} provide
a strong handle on the geometry of their measures.  In smooth
settings, variation of Fr\'echet means in the limit takes place on the
tangent space \cite{bhattacharya-patrangenaru2003,
bhattacharya-patrangenaru2005}, which of course is a vector space.
The analogue for $\CAT(\kappa)$ spaces---the stratification and
exponential map are not required---is that the tangent cone is always
$\CAT(0)$
(see \cite[Section~1.2]{shadow-geom}
for an exposition tailored to the current setting).  However, our
proofs of singular central limit theorems \cite{escape-vectors} reduce
further to the linear case, which necessitates the main result here
(Theorem~\ref{t:collapse}): the measure~$\mu$ has a continuous
\emph{tangential collapse} (Definition~\ref{d:collapse})
$$
  \LL: \Tmu \to \RR^m
$$
that preserves the Fr\'echet mean, restricts to an isometry on the
\emph{fluctuating cone~$\Cmu$} of directions in which the Fr\'echet
mean can vary under perturbation of~$\mu$
(Section~\ref{b:fluctuating-cone}), and preserves angles between
arbitrary and fluctuating tangent vectors.

The collapse~$\LL$ is constructed in Section~\ref{s:collapse} by
\emph{d\'evissage} (see especially Remarks~\ref{r:devissage},
\ref{r:resolved}, and~\ref{r:sticky}, along with
Definitions~\ref{d:step-devissage} and~\ref{d:dévissage}), a term from
algebraic geometry that means iteratively approaching a singular point
from less singular strata; e.g.,~see~\cite[Theorem~14.4 and its
proof]{Eis95}.  These tangential approaches are the \emph{limit
logarithm maps} from shadow geometry \cite{shadow-geom}.  To get
tangential collapse they are followed by convex geodesic projection
onto the relevant smooth stratum
(Definition~\ref{d:terminal-projection}).

The geometric arguments here require fairly weak hypotheses on the
measure~$\mu$ (Section~\ref{s:localized}): unique Fr\'echet mean and
no mass near its cut locus.  These conditions, with their resulting
convexity of the directional derivatives of the Fr\'echet function
(Section~\ref{b:conv_dd}), also suffice to prove a central limit
theorem for random tangent fields \cite{random-tangent-fields}, whose
convergence is the basis for more traditional central limit theorems
\cite{escape-vectors}, although additional analytic hypotheses are
required for that.

\subsection*{Acknowledgements}
DT was partially funded by DFG HU 1575/7.  JCM thanks the NSF RTG
grant DMS-2038056 for general support.
\vspace{-.5ex}

\section{Geometric prerequisites}\label{s:prereqs}

\noindent
Limit log maps on $\CAT(\kappa)$ spaces from the prequel
\cite{shadow-geom} form the basis for tangential collapse.  Only the
basics and cited prerequisites are reviewed here; see the prequel for
additional detail and relevant exposition.

\begin{defn}\label{d:angle}
The \emph{angle} between geodesics $\gamma_i: [0,\ve_i) \to \MM$ for
$i = 1,2$ emanating from~$p$ in a $\CAT(\kappa)$ space~$(\MM,\dd)$ and
parametrized by arclength is characterized by
$$
  \cos\bigl(\angle(\gamma_1,\gamma_2)\bigr)
  =
  \lim_{t,s\to 0}\frac{s^2+t^2-\dd^2(\gamma_1(s),\gamma_2(t))}{2st}.
$$
The geodesics~$\gamma_i$ are \emph{equivalent} if the angle between
them is~$0$.  The set $\SpM$ of equivalence classes is the \emph{space
of directions} at~$p$.
\end{defn}

\begin{lemma}\label{l:angular-metric}
The notion of angle makes the space~$\SpM$ of directions into a length
space whose \emph{angular metric}~$\dd_s$ satisfies
$$
  \dd_s(V, W) = \angle(V, W)
  \text{ whenever } V, W \in \SpM
  \text{ with } \angle(V, W) < \pi
$$
\end{lemma}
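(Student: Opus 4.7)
The plan is to verify that $\angle$ is a well-defined symmetric function on $\SpM \times \SpM$ with values in $[0,\pi]$ that satisfies the triangle inequality, and then to realize it as a length metric in the claimed range $\angle < \pi$. The quantity inside the limit in Definition~\ref{d:angle} is the cosine of the vertex angle at the apex of the Euclidean triangle with side lengths $s,\,t,\,\dd(\gamma_1(s),\gamma_2(t))$; hence it lies in $[-1,1]$, is symmetric in $\gamma_1,\gamma_2$, and vanishes when $\gamma_1 = \gamma_2$. So $\angle$ takes values in $[0,\pi]$, is symmetric, vanishes on the diagonal, and descends to a well-defined function on equivalence classes of geodesics emanating from~$p$.

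The central step is the triangle inequality. Given three geodesics $\gamma_1,\gamma_2,\gamma_3$ from~$p$ and small $s_1,s_2,s_3 > 0$, the four-point $\CAT(\kappa)$ condition furnishes a subembedding of $p, \gamma_1(s_1), \gamma_2(s_2), \gamma_3(s_3)$ into a model space of curvature~$\kappa$ that preserves the radial distances from~$p$ while not increasing the chord distances. The angles at the apex of this subembedded configuration satisfy the spherical triangle inequality, and each bounds from above the $\kappa$-comparison angle at~$p$ of the geodesic triangle $\triangle\bigl(p,\gamma_i(s_i),\gamma_j(s_j)\bigr)$; as $s_i, s_j \to 0$ the comparison angle converges to $\angle(\gamma_i, \gamma_j)$ by the defining formula. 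Passing to the limit yields $\angle(\gamma_1,\gamma_3) \le \angle(\gamma_1,\gamma_2) + \angle(\gamma_2,\gamma_3)$; this is the classical argument (\cite{bridson2013metric}).

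Define $\dd_s(V,W)$ to be the infimum of $\angle$-lengths of continuous paths from~$V$ to~$W$ in~$\SpM$. The triangle inequality telescopes across partitions to give $\dd_s \geq \angle$ unconditionally. For the reverse bound when $\angle(V,W) < \pi$, fix unit-speed representatives $\gamma_V, \gamma_W$ and choose $s > 0$ small enough that $\gamma_V(s)$ and $\gamma_W(s)$ lie in a convex neighbourhood of~$p$ where $\CAT(\kappa)$ supplies a unique minimizing geodesic $\sigma_s : [0,1] \to \MM$ between them. The map $r \mapsto [p,\sigma_s(r)] \in \SpM$ is a continuous path from~$V$ to~$W$; the $\CAT(\kappa)$ comparison bounds its $\angle$-length by the apex angle of the model triangle with side lengths $s,\,s,\,\dd(\gamma_V(s),\gamma_W(s))$, and this apex angle converges to $\angle(V,W)$ as $s \to 0$. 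Hence $\dd_s(V,W) \leq \angle(V,W)$.

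The main obstacle is the triangle inequality, which requires converting the $\CAT(\kappa)$ side-length comparison into a statement about angles at the apex and handling the simultaneous limit in the three radial parameters consistently. The hypothesis $\angle(V,W) < \pi$ is essential for the length-realization step: at angle~$\pi$ the path $r \mapsto [p,\sigma_s(r)]$ can have $\angle$-length strictly less than~$\pi$, so $\dd_s(V,W)$ may drop below~$\angle(V,W)$ in that case.
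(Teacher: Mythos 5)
Your argument is correct in outline, but it takes a genuinely different route from the paper, which proves this lemma purely by citation: it is \cite[Proposition~1.7]{shadow-geom}, which in turn is \cite[Lemma~9.1.39]{BBI01}. What you have written is essentially a reconstruction of that classical proof: the triangle inequality for upper angles gives $\dd_s \geq \angle$ by telescoping, and the chord construction $r \mapsto [p,\sigma_s(r)]$ together with Alexandrov's lemma (which is what lets you sum the comparison angles of the subtriangles and bound the total by the comparison angle of the big triangle, and which you should name explicitly) gives $\dd_s \leq \angle$ when $\angle < \pi$. The self-contained version buys independence from the reference at the cost of rederiving standard metric geometry. Two imprecisions are worth fixing. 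First, in the triangle-inequality step the direction of your subembedding inequality is suspect: if the chord distances do not increase under the subembedding, the model apex angles bound the comparison angles from \emph{below}, not above; the standard proof (\cite[Theorem~3.6.34]{BBI01}) is a direct comparison-triangle argument valid in any metric space and does not route through the four-point condition, so you should either use that argument or state the four-point condition with the correct inequalities. Second, your closing remark that at angle~$\pi$ the quantity $\dd_s(V,W)$ ``may drop below $\angle(V,W)$'' contradicts your own (correct) observation that $\dd_s \geq \angle$ holds unconditionally; the actual failure mode at angle~$\pi$ is that $\dd_s$ can strictly \emph{exceed}~$\pi$ (e.g.\ two of the three directions at the branch point of a tripod have angle~$\pi$ but infinite angular distance), which is exactly why the hypothesis $\angle(V,W) < \pi$ appears in the statement.
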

\begin{proof}
This is \cite[Proposition~1.7]{shadow-geom}, which in turn is
\cite[Lemma~9.1.39]{BBI01}.
\end{proof}

\begin{defn}\label{d:tangent-cone}
The \emph{tangent cone} at a point $p$ in a $\CAT(\kappa)$ space~$\MM$
is
$$
  \TpM = \SpM \times [0,\infty) / \SpM \times \{0\},
$$
whose \emph{apex} is often also called~$p$ (although it can be called
$\OO$ if necessary for clarity).  The \emph{length} of a vector $W =
W_p \times t$ with $W_p \in \SpM$ is $\|W\| = t$ in~$\TpM$.  The
\emph{unit tangent sphere}~$\SpM$ of length~$1$ vectors in~$\TpM$ is
identified with the space of~directions.
\end{defn}

\begin{defn}\label{d:inner-product}
Given tangent vectors $V, W \in \TpM$, their \emph{inner product} is
$$
  \<V,W\>_p = \|V\|\|W\| \cos\bigl(\angle(V,W)\bigr).
$$
The subscript $p$ is suppressed when the point~$p$ is clear from
context.
\end{defn}

\begin{lemma}\label{l:inner-product-is-continuous}
For a fixed basepoint in a $\CAT(\kappa)$ space, the inner product
function $\<\,\cdot\,,\,\cdot\,\>_p: \TpM \times \TpM \to \RR$ is
continuous.
\end{lemma}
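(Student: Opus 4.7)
The plan is to reduce continuity of the inner product to continuity of the angle on $\SpM\times\SpM$ and continuity of the norm, handling the apex separately.

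First I would address the easy case. If $V$ is the apex (so $\|V\|=0$), then $\<V,W\>_p = 0$, and for any sequences $V_n \to V$, $W_n \to W$, the factorization
$$
  \bigl\lvert\<V_n,W_n\>_p\bigr\rvert = \|V_n\|\,\|W_n\|\,\bigl\lvert\cos\angle(V_n,W_n)\bigr\rvert \le \|V_n\|\,\|W_n\|
$$
together with $\|V_n\|\to 0$ and boundedness of $\|W_n\|$ forces convergence to~$0$.  The case where $W$ is the apex is symmetric.

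Away from the apex, I would use that the tangent cone topology makes the length function $V\mapsto\|V\|$ and the normalization map $V\mapsto V/\|V\|\in\SpM$ continuous on $\TpM\setminus\{\OO\}$ (this is immediate from the cone construction in Definition~\ref{d:tangent-cone}).  Hence the problem reduces to proving that $\angle\colon\SpM\times\SpM\to[0,\pi]$ is continuous in the angular metric~$\dd_s$, since $\cos$ is continuous.  The key input is Lemma~\ref{l:angular-metric}, which identifies $\dd_s$ with~$\angle$ on the subset where $\angle<\pi$, and the general inequality $\dd_s\ge\angle$ which holds in any length space where $\angle$ is a local angle.  Given $V_n\to V$ and $W_n\to W$, the metric $\dd_s$ itself is continuous, so $\dd_s(V_n,W_n)\to\dd_s(V,W)$.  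If $\angle(V,W)<\pi$, then eventually $\dd_s(V_n,W_n)<\pi$ as well, so the lemma gives $\angle(V_n,W_n)=\dd_s(V_n,W_n)\to\dd_s(V,W)=\angle(V,W)$.  If $\angle(V,W)=\pi$, I would argue by contradiction: any subsequence with $\angle(V_{n_k},W_{n_k})\le\pi-\delta$ would satisfy $\dd_s(V_{n_k},W_{n_k})=\angle(V_{n_k},W_{n_k})\le\pi-\delta$, contradicting $\dd_s(V_n,W_n)\to\dd_s(V,W)\ge\pi$.

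Combining the three pieces by the usual limit laws finishes the argument: for $V,W$ away from the apex,
$$
  \<V_n,W_n\>_p = \|V_n\|\,\|W_n\|\,\cos\angle(V_n/\|V_n\|,\,W_n/\|W_n\|) \longrightarrow \|V\|\,\|W\|\,\cos\angle(V,W) = \<V,W\>_p,
$$
and at the apex the Cauchy--Schwarz-type bound above handles the limit.

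The main obstacle is the borderline case $\angle(V,W)=\pi$, since Lemma~\ref{l:angular-metric} does not identify $\dd_s$ with~$\angle$ there; one cannot simply transport continuity of $\dd_s$ into continuity of~$\angle$ directly.  The contradiction argument above circumvents this by exploiting that the only way $\dd_s$ and~$\angle$ can differ is at the value~$\pi$, so once $\dd_s(V_n,W_n)$ is below~$\pi$ the two functions coincide, pinning down the limit.
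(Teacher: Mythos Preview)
Your argument is correct and is essentially the derivation the paper alludes to: the paper's own proof is simply a citation to \cite[Lemma~1.21]{shadow-geom}, noting that it is ``derived from Lemma~\ref{l:angular-metric},'' and your write-up carries out exactly that derivation. The one point worth tightening is the inequality $\dd_s \ge \angle$: rather than invoking it as a general fact about ``any length space where $\angle$ is a local angle,'' you can observe directly that $\angle \le \pi$ by definition while Lemma~\ref{l:angular-metric} forces $\dd_s = \angle$ whenever $\angle < \pi$, so $\angle = \min(\dd_s,\pi)$---which is all your contradiction argument at $\angle(V,W)=\pi$ actually uses.
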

\begin{proof}
This is \cite[Lemma~1.21]{shadow-geom},
where it is derived from Lemma~\ref{l:angular-metric}.
\end{proof}

The angular metric $\dd_s$ induces a metric on the tangent cone
$\TpM$ which makes $\TpM$ into a length space.
  
\begin{defn}\label{d:conical-metric}
The \emph{conical metric} on the tangent cone $\TpM$ of a
$\CAT(\kappa)$ space is
$$
  \dd_p(V,W)
  =
  \sqrt{\|V\|^2 + \|W\|^2 - 2\<V,W\>}\
  \text{ for } V,W \in \TpM.
$$
\end{defn}

\begin{lemma}[{\cite[Lemma~3.6.15]{BBI01}}]\label{l:flat-triangle}
Any geodesic triangle in $\TpM$ with one vertex at the apex is
isometric to a triangle in~$\RR^2$.
\end{lemma}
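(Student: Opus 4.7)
The plan is to realize any such triangle, with apex vertex $\OO$ and other vertices $V, W \in \TpM$, inside an isometrically embedded Euclidean sector of~$\RR^2$. I would first dispose of the degenerate cases $\|V\|\|W\| = 0$ or $\angle(V,W) \in \{0, \pi\}$, each of which confines the triangle to at most two rays from the apex and is thus trivially collinear. So assume $\alpha := \angle(V, W) \in (0, \pi)$, and invoke Lemma~\ref{l:angular-metric} to pick an arclength-parametrized geodesic $\sigma : [0, \alpha] \to \SpM$ joining the directions of $V$ and $W$ in the space of directions.

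The core step is to show that the cone over~$\sigma$, namely the set parametrized by $\Psi(\theta, r) := \sigma(\theta) \times r$ inside $\TpM$, is identified with the planar sector of opening angle~$\alpha$ via the polar map $\Phi(\theta, r) := (r\cos\theta, r\sin\theta)$, and that this identification is an isometry. The single ingredient needed is the arclength-equals-angle identity $\angle(\sigma(s), \sigma(t)) = |t - s|$ for $0 \le s, t \le \alpha$, which I would establish by combining two observations: any subsegment of a minimizing geodesic is itself minimizing, so the $\dd_s$-distance between $\sigma(s)$ and $\sigma(t)$ equals $|t-s|$; and since this quantity is at most $\alpha < \pi$, Lemma~\ref{l:angular-metric} turns it into the angle. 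Plugging this identity into Definitions~\ref{d:inner-product} and~\ref{d:conical-metric} reduces the conical distance $\dd_p(\Psi(s, r_1), \Psi(t, r_2))$ to
$$
  \sqrt{r_1^2 + r_2^2 - 2 r_1 r_2 \cos(t - s)},
$$
matching the Euclidean distance between $\Phi(s, r_1)$ and $\Phi(t, r_2)$ by the law of cosines.

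Once this isometric identification is in hand, the two radial sides $\OO V$ and $\OO W$ of the triangle in $\TpM$ correspond exactly to the two bounding radii of the sector. The planar segment from $\Phi(0, \|V\|)$ to $\Phi(\alpha, \|W\|)$ has length $\sqrt{\|V\|^2 + \|W\|^2 - 2\|V\|\|W\|\cos\alpha} = \dd_p(V, W)$, so its preimage under the identification is a length-minimizing curve from~$V$ to~$W$ in~$\TpM$; since $\TpM$ is $\CAT(0)$ and so has unique geodesics between any two points, this preimage must coincide with the third side of the triangle. All three sides therefore lie in the cone over~$\sigma$, and the restriction of the isometry to the triangle yields the claimed isometry onto a Euclidean triangle.

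The main obstacle is the arclength-equals-angle identity along~$\sigma$, which is where the hypothesis $\alpha < \pi$ does its actual work and which is ultimately a restatement of Lemma~\ref{l:angular-metric} applied to subgeodesics. Everything after that reduces to matching the conical-metric formula against the Euclidean law of cosines and invoking uniqueness of geodesics in the $\CAT(0)$ tangent cone.
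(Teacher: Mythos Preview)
The paper does not prove this lemma---it simply cites \cite[Lemma~3.6.15]{BBI01}---so there is no in-paper argument to compare against. Your proof is the standard one for Euclidean cones and is sound in outline, with one gap: you invoke Lemma~\ref{l:angular-metric} to ``pick an arclength-parametrized geodesic $\sigma : [0, \alpha] \to \SpM$,'' but that lemma only makes $\SpM$ a \emph{length} space, and a length space need not have minimizing geodesics without completeness or local compactness, neither of which is assumed for a general $\CAT(\kappa)$ space at this point in the paper.

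The fix is to obtain $\sigma$ from the data you already have. The third side $VW$ of the geodesic triangle exists by hypothesis, and since $\alpha < \pi$ forces $\dd_p(V,W) < \|V\| + \|W\|$ it avoids the apex; the structure theory of geodesics in Euclidean cones (precisely what the cited portion of \cite{BBI01} develops) then shows that its radial projection to~$\SpM$, reparametrized by arclength, is a minimizing geodesic of length~$\alpha$ between the directions of $V$ and~$W$. With $\sigma$ supplied this way, the remainder of your argument---the arclength-equals-angle identity via Lemma~\ref{l:angular-metric}, matching the conical metric against the planar law of cosines, and the appeal to geodesic uniqueness in the $\CAT(0)$ cone---goes through unchanged.
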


\begin{defn}\label{d:log-map}
Fix a point $p$ in a $\CAT(\kappa)$ space~$(\MM,\dd)$.  For each point
$v$ in the set $\MM' \subseteq \MM$ of points with a unique shortest
path to~$p$, write $\gamma_v$ for the unit-speed shortest path
from~$p$ to~$v$ and $V = \gamma_v'(0)$ for its tangent vector at~$p$.
Define the \emph{log~map}~by
\begin{align*}
  \log_p : \MM' & \to \TpM
\\
              v &\mapsto \dd(p,v) V.
\end{align*}
$\MM$ is \emph{conical} with \emph{apex}~$p$ if $\MM' = \MM$ and
$\log_p: \MM \to \TpM$ is an isometry.
\end{defn}

\begin{prop}[{\cite[Definition~3.1]{shadow-geom}}]\label{p:limit-tangent-cone}
Given a $\CAT(\kappa)$ space~$\MM$ and $\XX \!=\! \TpM$ with
apex~$\OO$, fix $Z \in T_\OO\XX$.  For points $q$ and~$q'$ in the
segment $\OO z$ between $\OO$ and $z =\nolinebreak \exp_\OO Z$, there
is a radial transport map $\pp_{q \to q'}$ that identifies $T_q\XX$
with $T_{q'}\XX$.  The \emph{limit tangent cone} along~$Z$ is the
direct limit
$$
  \TZX = \varinjlim_{q \in \OO z} T_q\XX.
$$
\end{prop}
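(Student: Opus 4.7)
The statement packages a definition (of the radial transport maps) together with the existence of a directed system whose colimit is $\TZX$. The substantive content to verify is twofold: first, that the maps $\pp_{q \to q'}: T_q\XX \to T_{q'}\XX$ are well defined for every ordered pair $q, q'$ on the segment $\OO z$; and second, that they satisfy the cocycle identity $\pp_{q' \to q''} \circ \pp_{q \to q'} = \pp_{q \to q''}$, so that the direct limit makes sense as a colimit in a suitable category of pointed length spaces.

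The construction would exploit the fact that $\XX = \TpM$ is itself a $\CAT(0)$ cone with apex $\OO$, and hence admits a canonical homothety $h_t: \XX \to \XX$ for each $t > 0$ sending a point at conical distance $r$ from $\OO$ in direction $v \in S_\OO\XX$ to the point at distance $tr$ in the same direction. Because $\XX$ is a cone, $h_t$ is a metric similarity scaling all distances by factor $t$. The segment $\OO z$ is a radial ray from $\OO$, so taking $t = \|q'\|/\|q\|$ the homothety $h_t$ carries $q$ to $q'$. Since angles (Definition~\ref{d:angle}) are scale-invariant, the differential of $h_t$ at $q$ induces an angle-preserving bijection $\pp_{q \to q'}: T_q\XX \to T_{q'}\XX$ (with radial length scaled by $t$), defined concretely by sending the equivalence class of a geodesic $\gamma: [0,\varepsilon) \to \XX$ emanating from $q$ to the equivalence class of $h_t \circ \gamma$ emanating from $q'$.

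Compatibility then reduces to the functorial identity $h_{st} = h_s \circ h_t$ for homotheties centered at~$\OO$, which lifts to tangent cones by the chain rule for differentials of length-space maps. The required colimit exists by general category-theoretic principles: a directed system of pointed length spaces with uniformly angle-preserving transition maps admits a direct limit, and the cone structure of~$\XX$ ensures that each $T_q\XX$ comes equipped with a compatible outward radial direction to serve as a basepoint for the colimit.

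The main obstacle would be verifying rigorously that $\pp_{q \to q'}$, defined by postcomposition with $h_t$ on geodesic representatives, respects the equivalence relation defining the tangent cone and preserves the angular metric of Lemma~\ref{l:angular-metric}. Here Lemma~\ref{l:flat-triangle} is the crucial tool: geodesic triangles in $\XX$ with one vertex at~$\OO$ are flat, so the configuration of any two geodesics issuing from~$q$ together with~$\OO$ can be analyzed inside a Euclidean plane. Applying $h_t$ to such a configuration is a Euclidean scaling centered at~$\OO$, which preserves angles at the scaled point; this reduces the angle-preservation claim for $\pp_{q \to q'}$ to an elementary fact of plane geometry.
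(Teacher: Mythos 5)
The paper offers no proof of this statement: it is a construction imported wholesale from the prequel \cite{shadow-geom} (radial transport is \cite[Definition~2.7 and Proposition~2.8]{shadow-geom}, the limit tangent cone is \cite[Definition~3.1]{shadow-geom}), so there is no argument here to compare against line by line. Your homothety construction is the standard way to realize radial transport along a radial segment of a cone and is correct in substance: homogeneity of the conical metric in Definition~\ref{d:conical-metric} makes $h_t$ a similarity of ratio~$t$, hence it carries geodesics to geodesics and preserves angles, the cocycle identity follows from $h_{st}=h_s\circ h_t$, and since all transition maps are isomorphisms the direct limit is canonically any one of the $T_q\XX$. Two small caveats: to obtain a genuine \emph{identification} (isometry) of tangent cones you should rescale lengths by $1/t$ after taking the differential of $h_t$, and your construction applies only to interior points $q,q'\neq\OO$ of the segment --- the transport $\pp_{\OO\to q}$ emanating from the apex that appears in Definition~\ref{d:limit-log} is a genuinely different map (a contraction, generally non-injective), which is exactly what makes the limit log map nontrivial.
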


\begin{defn}[{\cite[Definition~3.3]{shadow-geom}}]\label{d:limit-log}
Given a $\CAT(\kappa)$ space~$\MM$ whose tangent cone $\XX
=\nolinebreak \TpM$ has apex~$\OO$, fix $Z \in T_\OO\XX$.  The
\emph{limit log map} along $Z$~is
\begin{align*}
  \LL_Z: T_\OO\XX & \to \TZX
\\
             tV &\mapsto tV_Z \text{ for all } t \geq 0,
\end{align*}
where $V_Z$ is the image in the limit tangent space~$\TZX$ of
$\pp_{\OO\to q} V$ for any $q \neq \OO$ in the geodesic segment
joining $\OO$ to~$z = \exp_\OO Z$.
\end{defn}

\begin{remark}\label{r:TvM-vs-TvX}
When $\XX$ is any $\CAT(0)$ cone with apex~$\OO$, such as $\XX =
\TpM$, the exponential map at~$\OO$ naturally identifies $\XX$
with~$T_\OO\XX$, so the limit log map~$\LL_Z$ naturally induces a map
$\XX \to \TZX$, also denoted~$\LL_Z$.  This convention is used often
in Section~\ref{b:mu-lim-log}.  To avoid cumbersome notation
like~$\vT_Z(\TpM)$, write $\TZM$ to denote~$\TZX$.  Let us restate
some results from \cite{shadow-geom} under this convention for use in
Section~\ref{b:mu-lim-log}.
\end{remark}

\begin{prop}\label{p:limit-log-iso-on-small-geo}
Let~$\MM$ be $\CAT(\kappa)$ and $\XX = \TpM$.  Suppose that $Z \in
\SpM$ and $\gamma: [0,1] \to \SpM$ is a geodesic of length $\alpha <
\pi$ such that there is at most one point $t_0\in [0,1]$ with
$\angle\bigl(Z,\gamma(t_0)\bigr) \geq \pi$.  Then $\LL_Z$
isometrically maps $\gamma$ onto its image.
\end{prop}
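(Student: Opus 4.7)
The plan is to reduce to the flat triangle structure of the tangent cone $\XX = \TpM$ (which is $\CAT(0)$), then pass to the direct limit defining $\TZM$.

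First, parametrize $\gamma$ by arclength and write $V_t = \gamma(t) \in \SpM$, so that $\angle(V_s, V_t) = \alpha\lvert t - s\rvert$ for all $s, t \in [0,1]$.  Because radial transport along the geodesic $\OO z$ in a $\CAT(0)$ cone preserves norms, each image $\LL_Z(V_t) = (V_t)_Z$ is a unit vector in $\TZM$.  It therefore suffices to verify that, whenever neither $V_s$ nor $V_t$ is antipodal to $Z$, the angle between $(V_s)_Z$ and $(V_t)_Z$ at the basepoint of $\TZM$ coincides with $\angle_\OO(V_s, V_t) = \alpha\lvert t - s\rvert$; the full isometry on $\gamma$ then follows by continuity across the at most one antipodal parameter $t_0$.

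The core geometric step uses Lemma~\ref{l:flat-triangle} applied to the triangles $\OO V_s z$, $\OO V_t z$, and $\OO V_s V_t$ in $\XX$: each is isometric to a flat Euclidean triangle.  Fix an interior point $q$ of the segment $\OO z$; the sub-triangles $\OO V_s q$ and $\OO V_t q$ inherit Euclidean structure, and from them one reads off the radial transports $\pp_{\OO \to q} V_s$ and $\pp_{\OO \to q} V_t$ in $T_q\XX$ together with the angle each makes at $q$ with the geodesic toward $\OO$.  Letting $q \to \OO$ along the ray to~$z$, these flat configurations stabilize in the direct limit $\TZM$ of Proposition~\ref{p:limit-tangent-cone}, and the continuity statements of Lemma~\ref{l:inner-product-is-continuous} and Lemma~\ref{l:angular-metric} ensure that the angle between the limiting vectors $(V_s)_Z$ and $(V_t)_Z$ equals the angle $\angle_\OO(V_s, V_t)$ read off from the flat triangle $\OO V_s V_t$ that shares the apex $\OO$ with the two triangles above.

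The main obstacle lies in this last step: carefully carrying out the angle comparison between the apex-$\OO$ configuration and the limit-$Z$ configuration without the luxury of a single ambient flat space, since three individually flat triangles sharing an edge need not fit together flatly in a general $\CAT(0)$ cone.  The antipodal hypothesis is essential precisely here: if $\angle(V_t, Z) = \pi$, the flat triangle $\OO V_t z$ degenerates to a segment and the geodesic from $q$ to $V_t$ is not unique, so the transport direction is not well defined; continuity of $\LL_Z$ across the single permitted antipodal parameter $t_0$ then patches the isometry conclusion on the two halves of $\gamma$ into an isometry on all of~$\gamma$.
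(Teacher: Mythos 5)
There is a genuine gap, and you have in fact pointed at it yourself.  First, note that the paper does not prove this proposition at all: its ``proof'' is a citation to \cite[Propositions~3.9 and~3.14]{shadow-geom}, so you are attempting to reconstruct an imported result from scratch.  The entire content of the statement is the equality $\angle\bigl(\LL_Z\gamma(s),\LL_Z\gamma(t)\bigr)=\angle\bigl(\gamma(s),\gamma(t)\bigr)$, and your argument for it is the sentence claiming that the flat configurations ``stabilize in the direct limit'' and that continuity ``ensures'' the limiting angle equals the apex angle.  That is an assertion of the conclusion, not a derivation, and you then concede in your final paragraph that ``three individually flat triangles sharing an edge need not fit together flatly in a general $\CAT(0)$ cone'' --- which is precisely why the step fails as written.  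Indeed, if the argument were valid it would prove far too much: nowhere in the core step do you use the hypotheses that $\gamma$ has length $\alpha<\pi$ and meets the antipodal set of~$Z$ in at most one point, so the same reasoning would show that $\LL_Z$ preserves \emph{all} angles.  But Proposition~\ref{p:limit-log-contracts} says $\LL_Z$ is in general only a contraction, and it is genuinely strict in singular examples (on an open book, limit log along a direction into one page folds the other pages together, identifying distinct non-antipodal directions).  Any correct proof must locate exactly where the hypothesis on $\gamma$ forbids this collapse; your reduction to ``neither endpoint antipodal to $Z$'' mislocates it, since the relevant condition constrains the whole segment and its position relative to~$Z$, not merely its endpoints.

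The concluding patch is also not sound as stated: a continuous map that restricts to an isometry on $\gamma|_{[0,t_0]}$ and on $\gamma|_{[t_0,1]}$ separately can still fold at $\gamma(t_0)$ and fail to be an isometry on all of~$\gamma$.  Ruling out such a fold requires an angle-sum argument at the image of $\gamma(t_0)$ (in the spirit of Lemma~\ref{l:sum=pi}), not continuity alone.  Two smaller points: the norm preservation of radial transport, while true, is not something you can read off from the material quoted in this paper and should be justified or cited; and the observation that the degenerate triangle $\OO\,\gamma(t_0)\,z$ makes the transport ``not well defined'' is not by itself an explanation of why one antipodal point is harmless while two are fatal.
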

\begin{proof}
This is a restatement of \cite[Propositions~3.9
and~3.14]{shadow-geom}.
\end{proof}

\begin{lemma}\label{l:limit-log-preserve-npc}
If~$\MM$ is $\CAT(\kappa)$ and $Z \in \TpM$ then $\TpM$ and~$\TZM$ are
$\CAT(0)$~spaces.
\end{lemma}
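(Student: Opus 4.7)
The plan is to establish the two $\CAT(0)$ claims in sequence, both ultimately resting on the same blow-up principle that tangent cones of Alexandrov spaces with upper curvature bound are nonpositively curved. Accordingly, after setting up the first assertion, the second will reduce to an iterated application to the cone $\XX = \TpM$ followed by a passage to a direct limit.

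First, for $\TpM$, I would cite the classical result (e.g.\ \cite[Theorem~II.3.19]{bridson2013metric} or the discussion in \cite[\S9.1]{BBI01}) that the tangent cone at any point of a $\CAT(\kappa)$ space is $\CAT(0)$. The underlying mechanism is rescaling: dilating the metric by a factor $n$ near $p$ turns $(\MM, n\dd)$ into a $\CAT(\kappa/n^2)$ space, and the pointed Gromov--Hausdorff limit as $n \to \infty$ is $\TpM$ in its conical metric from Definition~\ref{d:conical-metric}. Since the $\CAT$ inequality is preserved under Gromov--Hausdorff convergence and $\kappa/n^2 \to 0$, the limit is $\CAT(0)$. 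Alternatively, this fact is already implicit in the framework of \cite{shadow-geom} used here.

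Second, for $\TZM$, apply the first part to the $\CAT(0)$ cone $\XX = \TpM$: each $T_q\XX$ for $q$ in the segment $\OO z$ is then the tangent cone of a $\CAT(0)$ space and hence is $\CAT(0)$. By Proposition~\ref{p:limit-tangent-cone}, $\TZM = \varinjlim_{q \in \OO z} T_q\XX$ is a direct limit along the radial transport maps $\pp_{q \to q'}$, which are isometric identifications built in \cite{shadow-geom}. I would then check that the $\CAT(0)$ property survives this colimit: the four-point (or triangle-comparison) condition involves only finitely many points, and any such finite configuration in $\TZM$ is realized isometrically inside some $T_q\XX$ sufficiently far along the directed system; since that $T_q\XX$ is $\CAT(0)$, the comparison inequality holds in $\TZM$.

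The main obstacle is the last verification, i.e.\ confirming that the $\CAT(0)$ inequality transfers through the direct limit. This is a finitary check once the isometric nature of the radial transports is in hand, so the argument reduces to quoting or lightly repackaging results from the prequel \cite{shadow-geom}; no new geometric content is required beyond the classical tangent-cone-is-$\CAT(0)$ theorem applied twice.
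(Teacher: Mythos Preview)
Your proposal is correct and follows essentially the same route as the paper: both assertions reduce to the classical fact that tangent cones of $\CAT(\kappa)$ spaces are $\CAT(0)$, applied first to~$\MM$ and then to $\XX = \TpM$. The paper's proof is terser than yours on the second point: it simply observes that ``limit log produces a tangent cone,'' i.e.\ since the radial transports $\pp_{q\to q'}$ in Proposition~\ref{p:limit-tangent-cone} are isometric identifications, the direct limit $\TZM$ is isometric to any single $T_q\XX$ with $q$ in the open segment, so your finitary direct-limit verification, while not wrong, is unnecessary.
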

\begin{proof}
The tangent cone at any point of~$\MM$ is $\CAT(0)$
\cite[Proposition~1.18]{shadow-geom}.
The claim follows because limit log produces a tangent cone by
Definition~\ref{d:limit-log}.
\end{proof}

\begin{prop}\label{p:limit-log-contracts}
If~$\MM$ is $\CAT(\kappa)$ and $\XX = \TpM$ has apex~$\OO$, then the
limit log map is a contraction: for $\XX = \TpM$ and $Z
\in\nolinebreak T_\OO\XX$ and any $V,W \in \XX$,
$$
  \angle(\LL_Z V, \LL_Z W) \leq \angle(V,W),
$$
and equality holds if\/~$W = Z$.
\end{prop}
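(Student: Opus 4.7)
The plan is to establish the equality at $W = Z$ directly via Proposition~\ref{p:limit-log-iso-on-small-geo}, and then deduce the general contraction through a path-length argument in the unit sphere of~$\TZM$, which is $\CAT(1)$ by Lemma~\ref{l:limit-log-preserve-npc}.  Since $\LL_Z$ is positively homogeneous (Definition~\ref{d:limit-log}) and angles are scale-invariant, I first reduce to the case that $V, W \in \SpM$ are unit vectors.

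For the equality $\angle(\LL_Z V, \LL_Z Z) = \angle(V, Z) =: \alpha$, suppose first that $\alpha < \pi$.  The unit-speed geodesic $\gamma$ from $Z/\|Z\|$ to~$V$ in~$\SpM$ has length~$\alpha$, and the triangle inequality forces every point $\gamma(t)$ to lie within distance $\alpha < \pi$ of~$Z$, so $\gamma$ meets no antipodes of~$Z$.  Proposition~\ref{p:limit-log-iso-on-small-geo} then produces an isometric image $\LL_Z\gamma$ in the unit sphere of~$\TZM$, whose endpoints have angular distance $\alpha$ by Lemma~\ref{l:angular-metric}; positive homogeneity extends this identity to $\angle(\LL_Z V, \LL_Z Z) = \alpha$.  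The boundary case $\alpha = \pi$ then follows by continuity of the inner product (Lemma~\ref{l:inner-product-is-continuous}) along a sequence $V_n \to V$ with $\angle(V_n, Z) < \pi$.

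For the contraction $\angle(\LL_Z V, \LL_Z W) \leq \angle(V, W) =: \beta$ for general $V, W$, the case $\beta = \pi$ is vacuous, so assume $\beta < \pi$ and let $\gamma\colon [0, \beta] \to \SpM$ be the unit-speed geodesic from~$V$ to~$W$.  Partition $[0, \beta]$ at the crossings of~$\gamma$ with the antipodal set of~$Z$ into sub-arcs each containing at most one such antipode; Proposition~\ref{p:limit-log-iso-on-small-geo} then shows $\LL_Z$ is isometric on each sub-arc.  Concatenating, $\LL_Z\circ\gamma$ is a path in the unit sphere of~$\TZM$ of total length exactly~$\beta$, and the triangle inequality in that unit sphere yields $\angle(\LL_Z V, \LL_Z W) \leq \beta$, as desired.

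The delicate point is the partition step when $\gamma$ meets the antipodal set of~$Z$ in a non-discrete subset---for instance if~$\SpM$ contains a spherical join factor whose dual factor $\gamma$ traverses---because then no finite breakpoint set makes every sub-arc contain at most one antipode.  I would address this by perturbing~$Z$ to nearby directions $Z_n \to Z$ whose antipodal sets meet~$\gamma$ in only finitely many points, proving the bound for each~$Z_n$, and passing to the limit using continuity of the inner product (Lemma~\ref{l:inner-product-is-continuous}) together with continuity of the limit log construction in its defining basepoint.
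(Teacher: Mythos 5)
You should first note that the paper does not prove this proposition internally: its ``proof'' is a citation to \cite[Proposition~3.13 and Corollary~3.12]{shadow-geom}, so any self-contained argument is necessarily doing work the paper outsources. Your strategy---reduce to unit vectors, use Proposition~\ref{p:limit-log-iso-on-small-geo} on the geodesic from $Z$ to $V$ for the equality case, and chain isometric sub-arcs of the geodesic from $V$ to $W$ for the contraction---is a sensible reconstruction, and the equality case with $\angle(V,Z)<\pi$ is essentially correct. (Two small inaccuracies: the unit sphere of $\TZM$ being $\CAT(1)$ does not follow from Lemma~\ref{l:limit-log-preserve-npc}, which only gives $\CAT(0)$ for the cone; and for the final step you only need that the space of directions of $\TZM$ is a length space, so that path length bounds distance.)

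There are, however, two genuine gaps. First, the partition step: you correctly identify that the set $A=\{t:\angle(\gamma(t),Z)\geq\pi\}$ may be infinite, but your fix does not work as stated. Perturbing $Z$ to $Z_n$ changes the \emph{target} of the map---the limit tangent cone $\TZM$ is built from radial transport along the ray toward $\exp_\OO Z$, so $\vT_{Z_n}\MM$ and $\TZM$ are different spaces---and ``continuity of the limit log construction in its defining basepoint'' is nowhere established in this paper, nor is it clear how to even formulate it without first comparing the varying targets. Moreover a nearby $Z_n$ need not have a discrete antipodal set along $\gamma$. Since $t\mapsto\dd_s(\gamma(t),Z)$ is $1$-Lipschitz and $A$ is closed, you can split off the two outer sub-arcs $[0,\inf A]$ and $[\sup A,\beta]$, each containing exactly one antipode, but you still owe an argument for the middle arc $[\inf A,\sup A]$, which may consist entirely of antipodes of $Z$; nothing you cite controls $\LL_Z$ there. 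Second, the boundary case $\angle(V,Z)=\pi$ of the equality: you need (a) a sequence $V_n\to V$ with $\angle(V_n,Z)<\pi$, which need not exist (if $\dd_s(V,Z)>\pi$, every nearby point still has angle exactly $\pi$ with $Z$), and (b) continuity of $\LL_Z$ itself in order to pass to the limit on the left-hand side. But in this paper continuity of the limit log maps is \emph{deduced from} the contraction property (Proposition~\ref{p:ppt-L-v}.\ref{i:inductive-continuity} cites Proposition~\ref{p:limit-log-contracts}), so invoking it here is circular. Both gaps are exactly where the heavy lifting of \cite[Section~3]{shadow-geom} lives, and they need either that external input or a genuinely new argument.
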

\begin{proof}
This is \cite[Proposition~3.13]{shadow-geom} and, for the equality
case,
\cite[Corollary~3.12]{shadow-geom}.
\end{proof}

\begin{lemma}\label{l:sum=pi}
Fix smoothly stratified $\MM$ and $Z \in \SpM$ pointing toward a
stratum~$R$ of~$\MM$.  Then $\TZR$ is a vector space, so $-\LL_Z(Z)\in
\TZR$, and for any $W \in \TZM$,
$$
  \angle(\LL_Z Z, W) + \angle(W, -\LL_Z Z)
  =
  \angle(\LL_Z Z, -\LL_Z Z)
  =
  \pi.
$$
\end{lemma}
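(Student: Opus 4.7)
The plan is to prove the three claims in sequence. For the first, because $Z$ points into the manifold stratum $R$, the geodesic ray in $\XX = \TpM$ emanating from $\OO$ in direction $Z$ lies inside the vector subspace of $\TpM$ corresponding to $R$, which is Euclidean. The tangent cone at any interior point of this ray within that subspace is the same vector space, so the direct limit $\TZR$ is a vector space. The image $\LL_Z Z$ is the parallel transport of $Z$ along the ray (Definition~\ref{d:limit-log}), which stays inside this Euclidean subspace, so $\LL_Z Z \in \TZR$ and $-\LL_Z Z \in \TZR$ by closure under negation. The equality $\angle(\LL_Z Z, -\LL_Z Z) = \pi$ is then immediate, since antipodal nonzero vectors in a Euclidean space make an angle of~$\pi$.

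Now write $a = \|\LL_Z Z\|$, $\alpha = \angle(\LL_Z Z, W)$, and $\beta = \angle(W, -\LL_Z Z)$. The segment joining $-\LL_Z Z$ to $\LL_Z Z$ through $\OO$ lies in the Euclidean subspace $\TZR$, hence is a geodesic of length $2a$ in $\TZM$, so $d_{\TZM}(\LL_Z Z, -\LL_Z Z) = 2a$. The lower bound $\alpha + \beta \geq \pi$ follows from the triangle inequality for the angular metric at $\OO$ (Lemma~\ref{l:angular-metric}) combined with $\angle(\LL_Z Z, -\LL_Z Z) = \pi$.

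For the matching upper bound, invoke Lemma~\ref{l:flat-triangle} to obtain isometric embeddings $\iota_1, \iota_2$ into $\TZM$ of Euclidean triangles $T_1 \cong \OO\,\LL_Z Z\,W$ and $T_2 \cong \OO\,(-\LL_Z Z)\,W$; these embeddings agree on the common edge $\OO W$. Glue $T_1$ and $T_2$ together in $\RR^2$ along $\OO W$, placing $\LL_Z Z$ and $-\LL_Z Z$ on opposite sides; the straight segment between the two images has Euclidean length $2a\sin\!\bigl(\tfrac{\alpha+\beta}{2}\bigr)$ and, when $\alpha+\beta \leq \pi$, crosses the shared edge at an interior point $p^*$. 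Transporting the broken path $\LL_Z Z \to p^* \to -\LL_Z Z$ back to $\TZM$ via $\iota_1$ and $\iota_2$ yields a path of the same length, so
\[
  2a \;=\; d_{\TZM}(\LL_Z Z, -\LL_Z Z) \;\leq\; 2a\sin\!\Bigl(\tfrac{\alpha+\beta}{2}\Bigr),
\]
forcing $\sin\!\bigl(\tfrac{\alpha+\beta}{2}\bigr) = 1$ and hence $\alpha + \beta = \pi$.

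The delicate point is verifying that $p^*$ actually lies on the segment $\OO W$ rather than on its extension past $\OO$, equivalent to $\sin(\alpha+\beta) \geq 0$ and so to $\alpha+\beta \leq \pi$; paired with the lower bound, this then pins the identity down. To exclude the residual case $\alpha + \beta > \pi$, I would exploit the cone structure of $\TZM$: because the geodesic line through $\LL_Z Z$, $\OO$, and $-\LL_Z Z$ passes through the apex of the $\CAT(0)$ cone $\TZM$, the link $S_\OO \TZM$ is a spherical suspension with poles $[\LL_Z Z]$ and $[-\LL_Z Z]$, and in any such suspension the angular distances from an arbitrary point to the two poles sum to exactly~$\pi$.
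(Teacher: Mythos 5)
The paper itself does not prove this lemma; it simply cites \cite[Proposition~3.8]{shadow-geom}. So you are attempting to supply an argument the paper outsources, which is legitimate — but your attempt does not close the one step that carries all the content.

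The gap is in your final sentence. The claim that the link $S_\OO\TZM$ is a spherical suspension with poles $[\pm\LL_Z Z]$ is essentially a restatement of the identity $\angle(\LL_Z Z,W)+\angle(W,-\LL_Z Z)=\pi$, and it is \emph{false} for a general $\CAT(0)$ cone containing a geodesic line through its apex. For example, the cone over a circle of circumference $4\pi$ is $\CAT(0)$; the two rays over a pair of points at link-distance $2\pi$ concatenate to a geodesic line through the apex, yet a direction $W$ at link-distance $\pi$ from each of them has $\angle(\LL_Z Z,W)+\angle(W,-\LL_Z Z)=2\pi$. So the upper bound $\alpha+\beta\le\pi$ cannot come from the cone structure alone: it must use that $Z$ points into a smooth stratum $R$, so that the ray extends to a two-sided geodesic inside a manifold stratum and the first variation formula (the $C^1$ regularity of $\rho_w$, as in Proposition~\ref{p:gradient-squared-distance-function}) forces the one-sided derivatives of $t\mapsto\dd(w,\gamma(t))$ along $\gamma'(0^\pm)$ to be negatives of each other, i.e.\ $\cos\angle(V,\log_q w)=-\cos\angle(-V,\log_q w)$ for $V$ tangent to~$R$. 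You never invoke any of this, so the decisive inequality is assumed rather than proved. Relatedly, your gluing argument is logically idle: it derives $\alpha+\beta=\pi$ only \emph{under the hypothesis} $\alpha+\beta\le\pi$, while the lower bound $\alpha+\beta\ge\pi$ (which you correctly get from the adjacent-angles inequality, cf.\ Lemma~\ref{l:sum_adj_angles}) is already in hand; everything therefore rests on excluding $\alpha+\beta>\pi$, which is exactly where the proof stops. A smaller but real point: even $\angle(\LL_Z Z,-\LL_Z Z)=\pi$ is not ``immediate from antipodality in $\TZR$'' — you need that the concatenation of the two rays is a geodesic of $\TZM$, i.e.\ that the link-distance between the two directions is at least $\pi$ in the ambient link, which again uses that $\TZR$ sits convexly inside $\TZM$ via the manifold structure of~$R$.
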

\begin{proof}
This is \cite[Proposition~3.8]{shadow-geom}
in the smoothly stratified setting.
\end{proof}

The main results of \cite{shadow-geom} are of course useful here,
specifically in the proofs of
Corollaries~\ref{c:inj-of-limit-log-on-Cmu}
and~\ref{c:LL-preserves-Cmu}, having been designed specifially for
those purposes.


\begin{thm}[{\cite[Theorem~3.16]{shadow-geom}}]\label{t:isometry-limit-log}
Fix a $\CAT(\kappa)$ space~$\MM$ with tangent cone $\XX = \TpM$ and a
vector $Z \in T_\OO\XX$.  If $\KK \subseteq \Tmu$ is a geodesically
convex subcone containing at most one ray that has angle~$\pi$
with~$Z$, then the restriction $\LL_Z|_\KK: \KK \to\nolinebreak
\LL_Z(\KK)$ to~$\KK$ of the limit log map along~$Z$ is an isometry
onto its image.
\end{thm}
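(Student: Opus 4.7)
The plan is to reduce the isometry claim to preservation of apex angles on $\KK$ and then to invoke Proposition~\ref{p:limit-log-iso-on-small-geo} along short spherical geodesics in $\KK \cap \SpM$. Because $\LL_Z$ acts by $tV \mapsto tV_Z$ along each ray it preserves norms, and because the conical distance in both $\Tmu$ and $\TZM$ is determined by norms and apex angles (Definitions~\ref{d:inner-product} and~\ref{d:conical-metric}, Lemma~\ref{l:flat-triangle}, and the $\CAT(0)$ property of $\TZM$ from Lemma~\ref{l:limit-log-preserve-npc}), it will suffice to show $\angle(\LL_Z V, \LL_Z W) = \angle(V, W)$ for every $V, W \in \KK$.

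For the main case $\alpha := \angle(V, W) < \pi$, I would exploit the flat triangle $\OO V W$ given by Lemma~\ref{l:flat-triangle}. Every unit vector on the minor arc from $\hat V = V/\|V\|$ to $\hat W = W/\|W\|$ lies on a ray from $\OO$ through some point of the chord $VW$, which is contained in $\KK$ by geodesic convexity; since $\KK$ is a cone, the entire arc lies in $\KK \cap \SpM$ and, by Lemma~\ref{l:angular-metric}, is a minimizing $\dd_s$-geodesic of length $\alpha$. The single-antipode hypothesis forces at most one point of this arc to have angle $\pi$ with $Z$, so Proposition~\ref{p:limit-log-iso-on-small-geo} applies and $\LL_Z$ maps the arc isometrically onto its image in the unit sphere of $\TZM$. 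Reading off the endpoints gives $\dd_s(\LL_Z \hat V, \LL_Z \hat W) = \alpha < \pi$, which after a second application of Lemma~\ref{l:angular-metric} (now in $\TZM$) upgrades to $\angle(\LL_Z V, \LL_Z W) = \alpha$.

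The boundary case $\alpha = \pi$ I would handle by approximation. Proposition~\ref{p:limit-log-contracts} already supplies $\angle(\LL_Z V, \LL_Z W) \leq \pi$, and the single-antipode hypothesis guarantees that at least one of $V, W$---say $V$---satisfies $\angle(V, Z) < \pi$, so geodesic convexity of $\KK$ lets me choose a sequence $W_n \in \KK$ with $W_n \to W$ and $\angle(V, W_n) < \pi$, obtained by perturbing $W$ along a convex combination with a non-antipodal direction of $V$; the main case then yields $\angle(\LL_Z V, \LL_Z W_n) = \angle(V, W_n) \to \pi$, and continuity of the inner product (Lemma~\ref{l:inner-product-is-continuous}) together with the $1$-Lipschitz property of $\LL_Z$ (from Proposition~\ref{p:limit-log-contracts} plus norm preservation) forces $\angle(\LL_Z V, \LL_Z W) = \pi$ in the limit. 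The hard part will be the degenerate sub-case in which $\KK$ is essentially one-dimensional along the line through $V$ and $W$, so no approximating sequence exists inside $\KK$; there I would argue directly from the radial transport definition of the limit log (Proposition~\ref{p:limit-tangent-cone}) that the broken geodesic $V \to \OO \to W$ of length $\|V\| + \|W\|$ maps to a length-preserving broken path in $\TZM$, which forces the desired angle equality via the flat-triangle characterization of the conical metric, the single-antipode hypothesis on $Z$ being precisely what prevents the image from folding at the apex.
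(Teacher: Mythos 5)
First, a point of reference: the paper does not prove this statement at all---Theorem~\ref{t:isometry-limit-log} is imported verbatim from the prequel (\cite[Theorem~3.16]{shadow-geom}), so there is no in-paper proof to compare against, and what you have written is really an attempt to rederive the prequel's theorem from the other results quoted in Section~\ref{s:prereqs}. With that caveat, your reduction to angle preservation and your treatment of the case $\angle(V,W)<\pi$ are sound: the flat triangle $\triangle\,\OO VW$ (Lemma~\ref{l:flat-triangle}) identifies the directions through the chord $VW$ with a $\dd_s$-geodesic of length $\angle(V,W)$ in $\SpM$, convexity of the cone $\KK$ places that arc inside $\KK\cap\SpM$, the hypothesis supplies at most one $Z$-antipodal point on it, and Proposition~\ref{p:limit-log-iso-on-small-geo} finishes.

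The genuine gap is the antipodal case $\angle(V,W)=\pi$, which you have correctly located but not closed. Two problems. First, even in the non-degenerate situation your approximating sequence is not actually constructed: if $U\in\KK$ has $\angle(V,U)<\pi$, it does not follow that points $\sigma(t)$ near $W$ on the geodesic from $W$ to $U$ satisfy $\angle(V,\sigma(t))<\pi$. Convexity of $X\mapsto-\<V,X\>$ along geodesics (as in the proof of Proposition~\ref{p:convexity-of_Df}) bounds $-\<V,\sigma(t)\>$ above by $(1-t)\|V\|\,\|W\|+t\bigl(-\<V,U\>\bigr)$, but to conclude $\angle(V,\sigma(t))<\pi$ you must compare with $\|V\|\,\|\sigma(t)\|$, and convexity of the norm along geodesics gives an inequality pointing the wrong way; this step needs an argument. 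Second, in the degenerate sub-case $\KK=\RR_+V\cup\RR_+W$ your sketch is not a proof: the assertion that the single-antipode hypothesis ``prevents the image from folding at the apex'' is precisely the content to be established, and none of the quoted tools (contraction, equality of angles with $Z$, isometry on short spherical geodesics) obviously yields it, because every spherical geodesic joining $V/\|V\|$ to $W/\|W\|$ has length at least $\pi$ and need not lie in $\KK$. To see how delicate this is, take $\MM$ an open book with three pages, $V$ and $W$ the directions perpendicular to the spine in pages $2$ and $3$ (so $\angle(V,W)=\pi$), and $Z$ a non-perpendicular direction in page $1$, so that neither $V$ nor $W$ is antipodal to $Z$: both $V$ and $W$ appear to shadow onto the same perpendicular direction in the plane of page~$1$, i.e., the image does fold. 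Whether this is excluded by the precise hypotheses of \cite[Theorem~3.16]{shadow-geom} (for instance a stronger convexity requirement on $\KK$ in the spirit of Lemma~\ref{l:K-is-convex}) cannot be settled from what is quoted in this paper; either way, your argument for the antipodal case is missing its essential step.
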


\begin{cor}[{\cite[Corollary~3.19]{shadow-geom}}]\label{c:hull-preserved-by-limit-log}
Fix a $\CAT(\kappa)$ space~$\MM$ with tangent cone $\XX = \TpM$ and a
vector $Z \in T_\OO\XX$.  Taking limit log along any vector $Z \in
T_\OO\XX$ subcommutes with taking convex cones in the sense that for
any subset $\cS \subseteq \XX$,
$$
  \LL_Z(\hull\cS) \subseteq \hull\LL_Z(\cS),
$$
where the \emph{hull} of a subset
is the smallest geodesically convex cone containing the subset.
\end{cor}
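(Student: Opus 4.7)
The plan is to show that
$\KK := \LL_Z^{-1}\bigl(\hull\LL_Z(\cS)\bigr)$
is a geodesically convex subcone of $\XX$ containing $\cS$; once this is in hand, minimality of the hull forces $\hull\cS \subseteq \KK$, and applying $\LL_Z$ yields $\LL_Z(\hull\cS) \subseteq \LL_Z(\KK) \subseteq \hull\LL_Z(\cS)$.

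The containment $\cS \subseteq \KK$ is immediate from $\LL_Z(\cS) \subseteq \hull\LL_Z(\cS)$, and closure of $\KK$ under non-negative scaling is built into Definition~\ref{d:limit-log}: for $V \in \KK$ and $t \geq 0$, one has $\LL_Z(tV) = t\LL_Z(V) \in \hull\LL_Z(\cS)$, since $\hull\LL_Z(\cS)$ is a cone. The substantive task is geodesic convexity. Given $V, W \in \KK$, let $\gamma$ be the unique geodesic from $V$ to $W$ in the $\CAT(0)$ space $\XX$ (Lemma~\ref{l:limit-log-preserve-npc}). Because $\hull\{\LL_Z V, \LL_Z W\}$ is contained in $\hull\LL_Z(\cS)$ by the minimality definition of the hull, it suffices to establish the inclusion $\LL_Z(\gamma) \subseteq \hull\{\LL_Z V, \LL_Z W\}$. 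By Lemma~\ref{l:flat-triangle}, $\gamma$ lies in the $2$-dimensional flat subcone $\hull\{V, W\}$, reducing the problem to showing $\LL_Z\bigl(\hull\{V,W\}\bigr) \subseteq \hull\{\LL_Z V, \LL_Z W\}$.

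When the $2$-dimensional flat subcone $\hull\{V, W\}$ contains at most one ray at angle $\pi$ with $Z$, Theorem~\ref{t:isometry-limit-log} yields an isometric embedding $\LL_Z: \hull\{V, W\} \to \TZM$. The image is thus an isometric copy of a flat $2$-dimensional sector with apex $\OO$ and rays through $\LL_Z V$ and $\LL_Z W$; applying the flat triangle lemma inside the $\CAT(0)$ space $\TZM$ identifies this image with $\hull\{\LL_Z V, \LL_Z W\}$, delivering the inclusion.

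The main obstacle is verifying the hypothesis of Theorem~\ref{t:isometry-limit-log} in full generality. In the $\CAT(1)$ direction space $\SpM$ the antipodal set of $Z$ can have positive dimension, so a priori the $1$-dimensional arc of directions parametrizing $\hull\{V, W\}$ might meet it in more than one point. The strategy I would follow is to approximate the exceptional pair $(V, W)$ by nearby pairs $(V_n, W_n)$ whose associated $2$-dimensional sectors do satisfy the hypothesis, apply the isometry conclusion to each, and pass to the limit using the continuity of $\LL_Z$ (a consequence of the angle-contraction property in Proposition~\ref{p:limit-log-contracts}) together with the closedness of $\hull\LL_Z(\cS)$. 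Any degenerate collapse of the sector under $\LL_Z$ only makes the image smaller and thus cannot leave $\hull\{\LL_Z V, \LL_Z W\}$, so the inclusion survives the limiting procedure.
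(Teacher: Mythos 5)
The paper itself offers no proof of this corollary --- it is quoted verbatim from \cite[Corollary~3.19]{shadow-geom} --- so your argument can only be judged on its own terms. Your overall reduction is sound and is the natural one: pull back, set $\KK = \LL_Z^{-1}\bigl(\hull\LL_Z(\cS)\bigr)$, check that $\KK$ is a cone containing $\cS$, and reduce geodesic convexity of $\KK$ to the inclusion $\LL_Z\bigl(\hull\{V,W\}\bigr) \subseteq \hull\{\LL_Z V, \LL_Z W\}$. The generic case, where the arc of directions from $V$ to $W$ meets the antipodal set of $Z$ in at most one point, is handled correctly by Theorem~\ref{t:isometry-limit-log} together with Lemma~\ref{l:flat-triangle} and uniqueness of geodesics in the $\CAT(0)$ cone $\TZM$.

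The gap is the exceptional case, which is precisely where the content of the corollary lives (it is the reason the statement is an inclusion rather than an equality). Your perturbation scheme presumes that pairs whose connecting arc meets $\{Y \in \SpM : \angle(Y,Z) = \pi\}$ more than once can be approximated by pairs whose arcs meet it at most once. But that antipodal set can have nonempty interior in $\SpM$ --- for instance when $\SpM$ contains a circle of length greater than $2\pi$ through~$Z$, as happens at metrically singular points --- and then every nearby arc still crosses it in a whole subarc, so no perturbation restores the hypothesis of Theorem~\ref{t:isometry-limit-log}. The closing claim that ``any degenerate collapse only makes the image smaller and thus cannot leave $\hull\{\LL_Z V, \LL_Z W\}$'' is not an argument: Proposition~\ref{p:limit-log-contracts} gives that $\LL_Z\gamma(t)$ has the correct norm and smaller angles to $\LL_Z V$ and $\LL_Z W$, but in a branching $\CAT(0)$ cone a vector can have small angles to both generators of a flat sector while lying on an entirely different branch, so angle contraction does not place the image inside the hull. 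What is actually needed here is the finer structure of the shadow of~$Z$ from \cite{shadow-geom} --- in particular that the entire set of directions at angle $\pi$ from $Z$ is sent by $\LL_Z$ into the single ray through $-\LL_Z Z$ (compare Lemma~\ref{l:sum=pi}, which however is only stated here in the smoothly stratified setting), after which the arc from $V$ to $W$ can be split at its first and last antipodal crossings and each piece controlled separately. None of the tools quoted in this paper supplies that step, which is presumably why the corollary is imported from the prequel rather than reproved.
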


\section{Localized measures}\label{s:localized}

\noindent
Equip the $\CAT(\kappa)$ space~$\MM$ with a probability measure~$\mu$.
This section sets forth conditions on~$\mu$ to ensure uniqueness of
its Fr\'echet mean and convexity of its Fr\'echet function~$F$.  More
details on convexity of Fr\'echet functions on $\CAT(\kappa)$ spaces
for $\kappa > 0$ can be found in~\cite{kuwae2014}; for the $\CAT(0)$ case,
see the standard reference~\cite{sturm2003}.

\begin{defn}[Localized]\label{d:localized}
A measure~$\mu$ on a $\CAT(\kappa)$ space~$\MM$ is
\begin{enumerate}
\item\label{i:punctual}%
\emph{punctual} if its Fr\'echet mean~$\bmu$ is unique and the
Fr\'echet function of~$\mu$ is locally convex in a neighborhood
of~$\bmu$;

\item\label{i:retractable}%
\emph{retractable} if the logarithm map $\log_\bmu: \MM \to \Tmu$
in Definition~\ref{d:log-map} is uniquely defined $\mu$-almost surely;

\item\label{i:localized}%
\emph{localized} if it is punctual and retractable.
\end{enumerate}
\end{defn}

\begin{example}\label{e:localized}
Geometric intuition behind Definition~\ref{d:localized} is that
$\mu$ should be ``Fr\'echet-localized'', in the sense that
``retracting'' to the tangent cone
at~$\bmu$ captures all of the mass.  For instance, if $\mu$ is
a~measure on a~$\CAT(\kappa)$ space~$\MM$ that is supported in a
metric ball $B(\bmu,R_\mu)$ of radius $R_\mu < R_\kappa =
\pi/\sqrt\kappa$---this can be any measure when $\kappa = 0$---then
$\mu$ is localized.  Indeed, thanks to results by Kuwae
\cite{kuwae2014}, the Fr\'echet mean~$\bmu$ of such a measure is
unique and the Fr\'echet function of~$\mu$ is $k$-uniform convex in a
small ball around~$\bmu$.  In addition, such a measure is retractable
because the cut locus (the closure of the set of points with more than
shortest path to~$\bmu$) has measure~$0$.
\end{example}

\begin{lemma}\label{l:hmu}
Any localized measure~$\mu$ can be pushed forward to a measure
$$
  \hmu = (\log_\bmu)_\sharp\mu
$$
on~$\Tmu$.  (This notation~$\hmu$ for the tangential pushforward
measure is used throughout.)
\end{lemma}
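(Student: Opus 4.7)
The statement is essentially a direct unpacking of the retractability hypothesis in Definition~\ref{d:localized}, so the plan is short: verify that the pushforward is well-defined as a Borel probability measure on~$\Tmu$.

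First, punctuality ensures the Fr\'echet mean~$\bmu$ is unique, so the notation $\log_\bmu$ is unambiguous. Let $\MM' \subseteq \MM$ denote the set of points with a unique shortest path to~$\bmu$, as in Definition~\ref{d:log-map}. Retractability asserts that $\log_\bmu$ is defined on~$\MM'$ and that $\mu(\MM \setminus \MM') = 0$, so $\mu$ is concentrated on the domain of $\log_\bmu$. Next I would verify Borel measurability of $\log_\bmu \colon \MM' \to \Tmu$. In a $\CAT(\kappa)$ space, unique shortest paths depend continuously on their endpoints, and the distance function $\dd(\bmu, \,\cdot\,)$ is continuous, so $\log_\bmu|_{\MM'}$ is continuous as a map into the tangent cone equipped with its conical metric (Definition~\ref{d:conical-metric}); hence it is Borel measurable.

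With these ingredients in place, define $\hmu$ on Borel sets $A \subseteq \Tmu$ by
$$
  \hmu(A) = \mu\bigl(\log_\bmu^{-1}(A) \cap \MM'\bigr),
$$
which is a Borel probability measure on~$\Tmu$ because $\mu(\MM') = 1$. The only step requiring care is the measurability check, and this is immediate from continuity of the log map on its domain of definition, so no substantive obstacle arises. This matches the role of the lemma as a definitional prelude enabling the tangential analysis of $\mu$ that drives the deeper arguments of later sections.
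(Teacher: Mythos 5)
The paper states this lemma without any proof, treating it as an immediate consequence of the retractability clause in Definition~\ref{d:localized}, and your unpacking---uniqueness of $\bmu$ from punctuality, $\mu(\MM')=1$ from retractability, and measurability of $\log_\bmu$ on~$\MM'$---is exactly the intended justification. The only point you gloss slightly is that continuity of $v\mapsto\log_\bmu v$ into $(\Tmu,\dd_\bmu)$ needs not just convergence of the geodesics but convergence of their initial directions in the angular metric, which follows from the $\CAT(\kappa)$ bound of the angle by the comparison angle; in any case Borel measurability is all that is required, so the argument stands.
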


In a $\CAT(\kappa)$ space~$\MM$, the first variation formula
\cite[Theorem~4.5.6]{BBI01} implies that the squared distance function
is of class~$C^1$.
It is convenient to have a notation for this square-distance function
with one endpoint fixed.

\begin{defn}\label{d:half-square-distance}
Any point $w$ in a metric space~$\MM$ has \emph{half square-distance
function}
$$
  \rho_w = \frac 12 \dd^2(w,\mathord{\,\cdot\,}).
$$
\end{defn}

\begin{prop}\label{p:gradient-squared-distance-function}
For any point $w \in \MM$ in a $\CAT(\kappa)$ space, $\rho_w$ is
differentiable and
$$
  \nablaq \rho_w = -\log_q w
$$
in the sense that $\nablaq \rho_w(V) = -\<\log_q w,V\>$ for all $V \in
T_q\MM$.
\end{prop}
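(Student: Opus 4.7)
The plan is to unpack the definition of the directional derivative $\nablaq \rho_w(V) = \lim_{t \to 0^+} (\rho_w(\gamma_V(t)) - \rho_w(q))/t$ for a geodesic $\gamma_V$ emanating from $q$ with initial velocity $V$, and then invoke the first variation formula \cite[Thm~4.5.6]{BBI01} cited in the paragraph immediately before the statement. I split the argument into two trivial cases ($V = 0$ or $q = w$) and the main case.

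When $V = 0$, the constant geodesic gives $\nablaq \rho_w(V) = 0 = -\<\log_q w, 0\>$. When $q = w$, the vector $\log_q w = 0$, so the right-hand side is zero; on the left, any geodesic $\gamma$ from $q = w$ with initial velocity $V$ satisfies $\dd(w, \gamma(t)) = t\|V\|$ for small $t \geq 0$, whence $\rho_w(\gamma(t)) = \tfrac12 t^2\|V\|^2$, whose right-derivative at $t = 0$ is zero.

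For the main case ($V \neq 0$ and $q \neq w$), set $\hat V = V/\|V\|$ and reparametrize $\gamma_V(t) = \gamma_{\hat V}(t\|V\|)$, so that $\gamma_{\hat V}$ has unit speed. The first variation formula yields
$$
  \lim_{s \to 0^+}\frac{\dd(\gamma_{\hat V}(s), w) - \dd(q, w)}{s}
  = -\cos\angle(\hat V, \log_q w),
$$
and applying the chain rule to $\rho_w = \tfrac12 \dd(\cdot, w)^2$ together with the substitution $s = t\|V\|$ produces
$$
  \nablaq \rho_w(V)
  = -\dd(q, w) \cdot \|V\|\cos\angle(\hat V, \log_q w)
  = -\|\log_q w\| \|V\| \cos\angle(\log_q w, V),
$$
which equals $-\<\log_q w, V\>$ by Definition~\ref{d:inner-product}.

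The only potential obstacle is checking the hypotheses for the first variation formula, namely the existence of a unique shortest path from $q$ to~$w$; this is implicit in the well-definedness of $\log_q w$ appearing in the statement, and is automatic locally in any $\CAT(\kappa)$ space. Positive homogeneity of both sides under $V \mapsto tV$ for $t \geq 0$ (built into the inner product on the right, and visible on the left from the reparametrization $\gamma_{tV}(s) = \gamma_V(ts)$) then packages the identity into a well-defined differential on the entire tangent cone~$T_q\MM$.
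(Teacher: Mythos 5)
Your proof is correct and takes essentially the same route as the paper's, which likewise reduces the claim to the first variation formula (applied there directly to the squared distance along a unit-speed geodesic from $q$). Your extra attention to the degenerate cases $V=0$, $q=w$ and to positive homogeneity only makes explicit what the paper's one-line argument leaves implicit.
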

\begin{proof}
Let $V$ be a unit length tangent vector in $T_q\MM$ and $\gamma(t) =
\exp_w tV$.  It follows from the first variation formulat (see
\cite[Proposition~1.20]{shadow-geom})
that
\begin{equation*}
\lim_{t\to 0} \frac{\dd^2(\gamma(t),q)-\dd^2(w,q)}{t}
  =
  -2\cos \angle(\log_qw,V) \dd(w,q).\qedhere
\end{equation*}
\end{proof}

The exponential in the following definition
is a constant-speed geodesic whose initial tangent at~$p$ is~$V$.

\begin{defn}\label{d:directional-derivative}
Fix a punctual measure~$\mu$ on a $\CAT(\kappa)$ space~$\MM$.  The
\emph{directional derivative} $\nabla_{\!p} F$ of the Fr\'echet
function~$F$ at~$p$ is
\begin{align*}
  \nabla_{\!p} F : \TpM & \to \RR
\\
                      V &\mapsto \frac{d}{dt} F(\exp_p tV)|_{t=0}.
\end{align*}
If $\mu$ is also retractable, the \emph{directional derivative}
at~$\bmu$ of~$F_\hmu$, where $\hmu = (\log_\bmu)_\sharp\mu$, is
\begin{align*}
  \nablabmu F_\hmu:\Tmu & \to \RR
\\
                           V & \mapsto \frac{d}{dt} F_\hmu(tV)|_{t=0}.
\end{align*}
\end{defn}

\begin{cor}\label{c:nablamu(F)}
Fix a localized measure~$\mu$ on a $\CAT(\kappa)$ space~$\MM$.  For $V
\in \Tmu$, integrating inner products
(Definition~\ref{d:inner-product}) against $\hmu =
(\log_\bmu)_\sharp\mu$ (Lemma~\ref{l:hmu})~yields
$$
  \nablabmu F(V) = -\int_{\Tmu}\<W,V\>_\bmu \hmu(dW).
$$
\end{cor}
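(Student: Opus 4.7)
The plan is to unfold the definition of the directional derivative, pass the derivative inside the integral, and then apply Proposition~\ref{p:gradient-squared-distance-function} pointwise before finally changing variables via the pushforward $\hmu = (\log_\bmu)_\sharp\mu$.

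First I would write
$$
  F_\mu(\exp_\bmu tV) = \int_\MM \rho_w(\exp_\bmu tV)\,\mu(dw),
$$
so that
$$
  \nablabmu F(V)
  = \frac{d}{dt}\bigg|_{t=0}\!\int_\MM \rho_w(\exp_\bmu tV)\,\mu(dw).
$$
The substantive step is justifying the interchange of $\frac{d}{dt}$ and the integral by a dominated convergence argument. The triangle inequality yields
$$
  \bigl|\dd(\exp_\bmu tV,w)-\dd(\bmu,w)\bigr|\leq t\|V\|,
$$
so that the difference quotient of $\rho_w$ along $t\mapsto\exp_\bmu tV$ is bounded in absolute value by $\tfrac12\bigl(2\dd(\bmu,w)+t\|V\|\bigr)\|V\|$. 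For $t$ in a bounded interval this is dominated by a multiple of $1+\dd(\bmu,w)+\dd(\bmu,w)^2$, which is $\mu$-integrable because $\mu$ has finite Fréchet function at~$\bmu$ (the Fréchet mean being assumed unique in Definition~\ref{d:localized}\eqref{i:punctual}). Dominated convergence then permits differentiation under the integral sign.

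Next I would apply Proposition~\ref{p:gradient-squared-distance-function} at $q=\bmu$: for $\mu$-almost every $w$, retractability (Definition~\ref{d:localized}\eqref{i:retractable}) guarantees that $\log_\bmu w$ is uniquely defined, and
$$
  \frac{d}{dt}\bigg|_{t=0}\rho_w(\exp_\bmu tV)
  = \nablabmu\rho_w(V)
  = -\<\log_\bmu w,V\>_\bmu.
$$
Substituting back gives
$$
  \nablabmu F(V) = -\int_\MM \<\log_\bmu w,V\>_\bmu\,\mu(dw).
$$
Finally, the change of variables $W=\log_\bmu w$, which is well defined $\mu$-almost surely by retractability, converts the integral over~$\MM$ into an integral over~$\Tmu$ against the pushforward $\hmu$ from Lemma~\ref{l:hmu}, yielding the claimed formula.

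The main obstacle is the dominated convergence step: one needs the inner-product integrand $W\mapsto\<W,V\>_\bmu$ to be $\hmu$-integrable (equivalently, $w\mapsto\dd(\bmu,w)$ to be $\mu$-integrable), and one needs a uniform-in-$t$ bound on the difference quotients of $\rho_w$. Both follow from the finiteness of $F_\mu$ at~$\bmu$ and the triangle-inequality estimate above; continuity of the inner product (Lemma~\ref{l:inner-product-is-continuous}) then ensures measurability of the integrand on the tangent cone.
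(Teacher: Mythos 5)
Your proof is correct and follows the same route as the paper, which simply cites Proposition~\ref{p:gradient-squared-distance-function} together with the definitions of~$F$ and~$\rho_w$ and leaves the interchange of derivative and integral implicit. Your dominated-convergence justification of that interchange, and your explicit appeal to retractability so that $\log_\bmu w$ is defined $\mu$-almost surely, fill in details the paper omits but do not change the argument.
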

\begin{proof}
The result follows from
Proposition~\ref{p:gradient-squared-distance-function}, given the
definitions of~$F$ in~\eqref{eq:FrechetF} and the half square-distance
function in Definition~\ref{d:half-square-distance}.
\end{proof}

\subsection{Convexity of the Fr\'echet directional derivative}\label{b:conv_dd}
\mbox{}\medskip

\noindent
Recall a simple and useful result in \cite{BBI01} that says the sum of
adjacent angles on a nonpositively curved space is at least~$\pi$.

\begin{lemma}[see~{\cite[Lemma~4.3.7]{BBI01}}]\label{l:sum_adj_angles}
Suppose that $\gamma:[0,t]\to \MM$ is a geodesic from $p$ to~$r$ in a
$\CAT(\kappa)$ space~$\MM$.  Let $q = \gamma(t_0)$ with $t_0 \in
(0,t)$ be an inner point in the geodesic $\gamma\bigl([0,t]\bigr)$.
Then for any point $s \in \MM$,
$$
  \angle(\log_qp,\log_qs)+\angle(\log_qs,\log_qr) \geq \pi.
$$
\end{lemma}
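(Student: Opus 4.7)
The strategy is twofold: identify the angle at~$q$ between the two halves of~$\gamma$ as exactly~$\pi$, then apply the triangle inequality for angles at~$q$ to insert the extra direction toward~$s$.

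First, I would compute $\angle(\log_q p, \log_q r)$ directly from Definition~\ref{d:angle}. Parametrize the two halves of~$\gamma$ emanating from~$q$ by $\gamma_1(\sigma) = \gamma(t_0-\sigma)$, pointing toward~$p$, and $\gamma_2(\tau) = \gamma(t_0+\tau)$, pointing toward~$r$, each by arclength for small $\sigma,\tau > 0$. Since $\gamma$ is a geodesic on all of~$[0,t]$, the three parameter values $t_0-\sigma,\,t_0,\,t_0+\tau$ lie in order along~$\gamma$, so $\dd(\gamma_1(\sigma),\gamma_2(\tau)) = \sigma+\tau$. The angle formula then gives
$$
  \cos\angle(\log_q p,\log_q r)
  = \lim_{\sigma,\tau\to 0}\frac{\sigma^2+\tau^2-(\sigma+\tau)^2}{2\sigma\tau}
  = -1,
$$
so $\angle(\log_q p,\log_q r) = \pi$.

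Second, I would invoke the triangle inequality for angles at~$q$: for any three directions $V, U, W \in S_q\MM$,
$$
  \angle(V, W) \leq \angle(V, U) + \angle(U, W).
$$
This follows from Lemma~\ref{l:angular-metric}, which makes $(S_q\MM,\dd_s)$ into a length space. When both summands on the right are $< \pi$, they coincide with $\dd_s(V,U)$ and $\dd_s(U,W)$, and the inequality is the triangle inequality for~$\dd_s$ combined with $\angle(V,W) \leq \dd_s(V,W)$; when either summand equals~$\pi$, the right-hand side is already $\geq \pi \geq \angle(V,W)$. Taking $V, U, W$ to be the unit directions of $\log_q p,\,\log_q s,\,\log_q r$ (using any shortest-geodesic representative if $\log_q s$ is not uniquely defined) and inserting $\angle(\log_q p,\log_q r) = \pi$ from the first step yields the claim.

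\textbf{Main obstacle.} The only minor subtlety is the passage from the length-space triangle inequality on $(S_q\MM,\dd_s)$ to the angular triangle inequality, which requires the short case split above depending on whether any of the angles equals~$\pi$. Otherwise everything reduces to the definitions and the cited $\CAT(\kappa)$ prerequisites.
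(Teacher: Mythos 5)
Your argument is correct. Note that the paper does not actually prove this lemma: it is quoted directly from \cite[Lemma~4.3.7]{BBI01} with no argument supplied, so there is no ``paper proof'' to compare against. What you have written is essentially the standard proof of that cited result: (i) the two halves of a geodesic make angle exactly~$\pi$ at an interior point, by the displayed computation with $\dd(\gamma_1(\sigma),\gamma_2(\tau))=\sigma+\tau$ (valid for small $\sigma,\tau$ since a geodesic is locally a shortest path), and (ii) the triangle inequality for angles at~$q$. Your handling of step (ii) is careful in the right places: the case split on whether a summand equals~$\pi$ is needed because Lemma~\ref{l:angular-metric} only identifies $\angle$ with $\dd_s$ below~$\pi$, and the remaining inequality $\angle(V,W)\leq\dd_s(V,W)$ holds because $\dd_s$ is the intrinsic metric induced by the angle (alternatively, one can invoke the triangle inequality for upper angles, \cite[Theorem~3.6.34]{BBI01}, which holds in any metric space and coincides with the angle in the $\CAT(\kappa)$ setting). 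Your parenthetical about choosing a shortest-path representative for $\log_q s$ is also the right caveat, since the statement implicitly presupposes that such a path exists.
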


\begin{prop}\label{p:convexity-of_Df}
When $\mu$ is a localized measure on a $\CAT(\kappa)$ space~$\MM$, the
directional derivative $\nablabmu F$ from
Definition~\ref{d:directional-derivative} is a convex function on
$(\Tmu ,\dd_\bmu)$.
\end{prop}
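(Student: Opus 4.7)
The plan is to reduce to a pointwise convexity statement. By Corollary~\ref{c:nablamu(F)},
\[
  \nablabmu F(V) = -\int_\Tmu \<W, V\>_\bmu \,\hmu(dW),
\]
and since an integral against a positive measure of a family of convex functions is convex, it suffices to show that for each fixed $W \in \Tmu$, the map $V \mapsto -\<W, V\>_\bmu$ is convex on $(\Tmu, \dd_\bmu)$.

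The algebraic key is to rewrite $-\<W, V\>_\bmu$ in terms of half square-distance functions (Definition~\ref{d:half-square-distance}) in the tangent cone. Rearranging $\dd_\bmu^2(W, V) = \|W\|^2 + \|V\|^2 - 2\<W, V\>$ from Definition~\ref{d:conical-metric} and using that the apex $\OO$ of $\Tmu$ satisfies $\rho_\OO(V) = \tfrac{1}{2}\|V\|^2$ gives
\[
  -\<W, V\>_\bmu = \rho_W(V) - \rho_\OO(V) - \tfrac{1}{2}\|W\|^2.
\]
By Lemma~\ref{l:limit-log-preserve-npc}, $(\Tmu, \dd_\bmu)$ is $\CAT(0)$, so along any geodesic $\gamma: [0,1] \to \Tmu$ from $V_0$ to $V_1$ the $1$-convexity of half square-distance functions (the CN inequality characterizing $\CAT(0)$, equivalent in this setting to Lemma~\ref{l:sum_adj_angles}) yields
\[
  \rho_W(\gamma(t)) \leq (1-t)\rho_W(V_0) + t\rho_W(V_1) - \tfrac{t(1-t)}{2}\dd_\bmu^2(V_0, V_1).
\]
The analogous inequality for $\rho_\OO$ holds as an \emph{equality}: by Lemma~\ref{l:flat-triangle}, the triangle $\OO V_0 V_1$ is isometric to a Euclidean triangle, and an elementary calculation in $\RR^2$ shows that $\rho_\OO(\gamma(t)) = \tfrac{1}{2}\|\gamma(t)\|^2$ takes exactly the form $(1-t)\rho_\OO(V_0) + t\rho_\OO(V_1) - \tfrac{t(1-t)}{2}\dd_\bmu^2(V_0, V_1)$.

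Subtracting the two relations, the excess term $-\tfrac{t(1-t)}{2}\dd_\bmu^2(V_0, V_1)$ cancels, leaving exactly the convexity inequality for $-\<W, \gamma(t)\>_\bmu$. Integrating against $\hmu$ then yields convexity of $\nablabmu F$. The main subtlety arises when the geodesic $\gamma$ passes through the apex $\OO$, where Lemma~\ref{l:flat-triangle} does not directly apply: $\gamma$ then decomposes as the concatenation of the rays $V_0 \OO$ and $\OO V_1$, with $\dd_\bmu(V_0, V_1) = \|V_0\| + \|V_1\|$, and a direct check confirms that $\|\gamma(t)\|^2$ still matches the Euclidean interpolation formula, so the subtraction argument goes through unchanged.
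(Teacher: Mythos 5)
Your proof is correct, but it takes a genuinely different route from the paper's. The paper writes $\nablabmu F(V)=\nablabmu F_\hmu(V)$ as a limit of difference quotients $\lim_{t\to0}\bigl(F_\hmu(tU(s))-F_\hmu(\bmu)\bigr)/t$ of the globally convex Fr\'echet function $F_\hmu$ on the $\CAT(0)$ cone, using Lemma~\ref{l:flat-triangle} only to know that $tU(s)$ is the geodesic from $tV$ to $tW$, and then passes the convexity inequality through the limit. You instead prove the stronger pointwise statement that $V\mapsto-\<W,V\>_\bmu$ is convex for \emph{every} fixed $W$, via the decomposition $-\<W,V\>=\rho_W(V)-\rho_\OO(V)-\tfrac12\|W\|^2$: the strengthened ($1$-convex) form of convexity of $\rho_W$ minus the exact Euclidean interpolation identity for $\rho_\OO$ on the flat triangle $\OO V_0V_1$ cancels the quadratic correction term. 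This is sound, and it has two advantages: it isolates a reusable pointwise fact, and it explicitly treats geodesics through the apex, a case the paper's proof quietly restricts away (handling $\angle(V,W)\geq\pi$ only later, in Proposition~\ref{p:escape-is-convex}). Two small caveats. First, your argument genuinely needs the quadratic (CN/Bruhat--Tits) form of convexity of $\rho_W$, not mere convexity --- otherwise the subtraction leaves an extra $+\tfrac{t(1-t)}{2}\dd_\bmu^2(V_0,V_1)$ on the right and the argument fails; that inequality is the standard characterization of $\CAT(0)$ spaces (see \cite{sturm2003} or \cite{bridson2013metric}), but it is \emph{not} Lemma~\ref{l:sum_adj_angles}, which only bounds sums of adjacent angles, so the citation should be corrected. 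Second, interchanging convexity with the integral requires $\int\|W\|\,\hmu(dW)<\infty$ so that $\nablabmu F$ is finite; the paper assumes this implicitly elsewhere, so it is consistent to do so here, but it is worth stating.
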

\begin{proof}
For convenience, identify $\Tmu$ with its tangent cone at~$\bmu$ via
Definition~\ref{d:log-map}.  Then $V = \exp_\bmu V \in \Tmu$ (as in a
few earlier locations, this exponential and others in the rest of this
proof do not need the smoothly stratified~setting)~and
$$
  \nablabmu F(V)
  =
  -\int_{\Tmu}\<W,V\>_\bmu \hmu(dW)
$$
by Corollary~\ref{c:nablamu(F)}.  Thus, for all tangent vectors $V$
at~$\bmu$,
\begin{equation}\label{eq:convexity-of_Df:-1}
  \nablabmu F(V) = \nablabmu F_\hmu(V).
\end{equation}

Next proceed to show that $\nablabmu F_\hmu$ is convex on the geodesic
space $(\Tmu,\dd_\bmu)$.  For $V,W \in \Tmu$ satisfying
$\angle(V,W) < \pi$, denote the geodesic of constant speed in
$(\Tmu, \dd_\bmu)$ from~$V$ to~$W$~by
$$
  U(s) \text{ for } s \in [0,1],
  \text{ so }
  U(0) = V
  \text{ and }
  U(1) = W.
$$
It suffices to show that
\begin{equation}\label{eq:convexity-of_Df:0}
  \nablabmu F_\hmu\bigl(U(s)\bigr)
  \leq
  (1-s)\nablabmu F(\exp_\bmu V) + s\nablabmu F_\hmu(W).
\end{equation}
It follows from Lemma~\ref{l:flat-triangle} that $tU(s)$ for $s \in
[0,1]$ is the geodesic from~$tV$ to~$tW$.  Since $(\Tmu, \dd_\bmu)$ is
nonpositively curved by Lemma~\ref{l:limit-log-preserve-npc}, the
function $F_\hmu$ is convex because it is the Fr\'echet function
of~$\hmu$.  Convexity of~$F_\hmu$ then implies
$$
  (1-s)F_\hmu(tV) + sF_\hmu(tW)
  \geq
  F_\hmu\bigl(tU(s)\bigr) \text{ for all } t \in [0,1].
$$
Combining this with the definition of directional derivative
(Definition~\ref{d:directional-derivative}) produces
\begin{align*}
\nablabmu F_\hmu\bigl(U(s)\bigr)
  & = \lim_{t \to 0} \frac{F_\hmu\bigl(tU(s)\bigr) - F_\hmu(\bmu)}{t}
\\
  &\leq\lim_{t \to 0} \frac{(1-s)\bigl(F_\hmu(tV) - F_\hmu(\bmu)\bigr)
    + s\bigl(F_\hmu(tW) - F_\hmu(\bmu)\bigr)}{t}
\\
  & = (1-s)\lim_{t \to 0} \frac{F_\hmu(tV) - F_\hmu(\bmu)}{t} +
      s\lim_{t \to 0} \frac{F_\hmu(tW) - F_\hmu(\bmu)}{t}
\\
  & = (1-s)\nablabmu F_\hmu(V) + s\nablabmu F_\hmu(W),
\end{align*}
which proves~\eqref{eq:convexity-of_Df:0} and thus completes the
proof.
\end{proof}

The following consequence of the proof above allows most of the work
in this paper to be accomplished using~$F_\hmu$ instead of $\Fmu =
F_\mu$.

\begin{cor}\label{c:F-and-Ffwd-same-dd}
Fix a localized measure~$\mu$ on a $\CAT(\kappa)$ space~$\MM$.  For
any $V\in \Tmu$,
$$
  \nablabmu F(V) = \nablabmu F_\hmu(V).
$$
\end{cor}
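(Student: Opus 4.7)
The plan is to observe that both sides of the claimed equality admit the same integral representation, so the corollary reduces to applying Corollary~\ref{c:nablamu(F)} (equivalently, Proposition~\ref{p:gradient-squared-distance-function}) twice: once on~$\MM$ and once on its tangent cone~$\Tmu$. Indeed, Corollary~\ref{c:nablamu(F)} already gives
$$
  \nablabmu F(V) = -\int_{\Tmu}\<W,V\>_\bmu\, \hmu(dW),
$$
so the only task is to show that $\nablabmu F_\hmu(V)$ is given by the same integral.

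For this, I would exploit that the tangent cone $\Tmu$ is itself a $\CAT(0)$ space (Lemma~\ref{l:limit-log-preserve-npc}) and is conical with apex~$\bmu$. Under the identification of~$\Tmu$ with $T_\bmu(\Tmu)$ via the exponential at the apex (see Definition~\ref{d:log-map} and Remark~\ref{r:TvM-vs-TvX}), the log map $\log_\bmu$ on the cone is the identity, so that for each~$W\in\Tmu$ Proposition~\ref{p:gradient-squared-distance-function} applied within the cone yields
$$
  \frac{d}{dt}\tfrac{1}{2}\dd_\bmu^2(tV,W)\big|_{t=0} = -\<W,V\>_\bmu.
$$
Integrating against $\hmu$ (with the same justification for exchanging derivative and integral that underlies Corollary~\ref{c:nablamu(F)}) produces
$$
  \nablabmu F_\hmu(V) = -\int_{\Tmu}\<W,V\>_\bmu\, \hmu(dW),
$$
which coincides with the integral expression for~$\nablabmu F(V)$ above.

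I do not expect any serious obstacle: this corollary is essentially isolating equation~\eqref{eq:convexity-of_Df:-1} from the proof of Proposition~\ref{p:convexity-of_Df} as a standalone statement, and its only content beyond Corollary~\ref{c:nablamu(F)} is the triviality that on a cone the log map at the apex is the identity. The mildly delicate point, which is inherited from Corollary~\ref{c:nablamu(F)} rather than newly arising, is the differentiation-under-the-integral step for the Fréchet function of~$\hmu$ near the apex; this is handled by the first variation formula together with a uniform bound on the difference quotient $\bigl(\rho_W(tV)-\rho_W(\bmu)\bigr)/t$ coming from $\CAT(0)$ geometry on~$\Tmu$.
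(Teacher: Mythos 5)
Your proposal is correct and matches the paper's argument: the paper proves this corollary by pointing to equation~\eqref{eq:convexity-of_Df:-1}, which is established in the proof of Proposition~\ref{p:convexity-of_Df} exactly as you describe --- apply Corollary~\ref{c:nablamu(F)} to get the integral formula for $\nablabmu F(V)$, then identify $\Tmu$ with its own tangent cone at the apex via Definition~\ref{d:log-map} so that the same integral formula computes $\nablabmu F_\hmu(V)$. You have correctly diagnosed that the only content beyond Corollary~\ref{c:nablamu(F)} is that the log map at the apex of a cone is the identity.
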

\begin{proof}
This is \eqref{eq:convexity-of_Df:-1}.
\end{proof}

\subsection{Escape and fluctuating cones}\label{b:fluctuating-cone}
\mbox{}\medskip

\noindent
Recall notation from Definition~\ref{d:directional-derivative}: $\hmu$
is the pushforward of $\mu$ under the logarithm map $\log_\bmu$ and
$F_\hmu$ is the Fr\'echet function of~$\hmu$.
Corollary~\ref{c:F-and-Ffwd-same-dd} observed that $\nablabmu F =
\nablabmu F_\hmu$.  Thus it is easier to understand $\nablabmu F$ by
studying properties of~$\nablabmu F_\hmu$~on~$\Tmu$.

\begin{lemma}\label{l:mean-of-hmu}
Fix a localized measure~$\mu$ on a $\CAT(\kappa)$ space~$\MM$.
Identify $\bmu$ with the apex of $\Tmu$.  Then $\bmu$ is the Fr\'echet
mean of~$\hmu$ on~$\Tmu$.
\end{lemma}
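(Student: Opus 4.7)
The plan is to show that the apex $\bmu$ globally minimizes $F_\hmu$ on $\Tmu$ and then to invoke uniqueness of Fréchet means on $\CAT(0)$ spaces to identify $\bmu$ as \emph{the} Fréchet mean. Two ingredients, both already established in the paper, combine via the cone structure.

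First I would observe that every directional derivative of $F_\hmu$ at the apex is nonnegative. By Corollary~\ref{c:F-and-Ffwd-same-dd}, $\nablabmu F_\hmu(V) = \nablabmu F(V)$ for every $V \in \Tmu$. Since $\mu$ is punctual (Definition~\ref{d:localized}), $\bmu$ minimizes $F$, so the function $t \mapsto F(\exp_\bmu tV)$ has value $F(\bmu)$ at $t=0$ and is $\geq F(\bmu)$ for small $t \geq 0$; dividing by $t$ and letting $t \downarrow 0$ forces $\nablabmu F(V) \geq 0$, hence $\nablabmu F_\hmu(V) \geq 0$.

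Second I would use convexity of $F_\hmu$ along rays emanating from the apex. Since $\Tmu$ is $\CAT(0)$ by Lemma~\ref{l:limit-log-preserve-npc} and $F_\hmu$ is the Fréchet function of the probability measure $\hmu$ on $\Tmu$, the argument already recorded inside the proof of Proposition~\ref{p:convexity-of_Df} shows that $F_\hmu$ is convex along geodesics. In particular, for each $V \in \Tmu$ the ray $t \mapsto tV$ is a geodesic from $\bmu$, and $g(t) = F_\hmu(tV)$ is a convex function on $[0,\infty)$ whose right-derivative at $0$ equals $\nablabmu F_\hmu(V) \geq 0$. A convex function on $[0,\infty)$ with nonnegative right-derivative at the origin is nondecreasing, so $F_\hmu(tV) \geq F_\hmu(\bmu)$ for all $t \geq 0$. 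Every point of the cone $\Tmu$ lies on such a ray, so $\bmu$ globally minimizes~$F_\hmu$. Finiteness of $F_\hmu(\bmu)$ is automatic because $\|\log_\bmu w\| = \dd(\bmu,w)$ gives $F_\hmu(\bmu) = F(\bmu) < \infty$, and uniqueness of the Fréchet mean on the $\CAT(0)$ space $\Tmu$ \cite{sturm2003} then upgrades ``a minimizer'' to ``the Fréchet mean''.

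No serious obstacle arises: the proof merely marries the first-order condition at $\bmu$ (transferred from $F$ to $F_\hmu$ by Corollary~\ref{c:F-and-Ffwd-same-dd}) with the convexity already in hand, using the cone structure of $\Tmu$ to promote directional information at the apex into a statement about the entire tangent cone.
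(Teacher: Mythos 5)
Your proposal is correct and follows essentially the same route as the paper: both arguments combine convexity of $F_\hmu$ on the $\CAT(0)$ cone $\Tmu$ with nonnegativity of $\nablabmu F_\hmu = \nablabmu F$ at the apex (via Corollary~\ref{c:F-and-Ffwd-same-dd} and punctuality of~$\mu$). You merely spell out in more detail the step from nonnegative directional derivatives plus convexity along rays to global minimization, which the paper leaves implicit.
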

\begin{proof}
Since $\Tmu$ is nonpositively curved by
Lemma~\ref{l:limit-log-preserve-npc}, the Fr\'echet function~$F_\hmu$
is convex.  Thus it suffices to show that $\nablabmu F_\hmu$ is
nonnegative on~$\Tmu$.  Notice that $\nablabmu F$ is nonnegative
on~$\Tmu$ because $F$ is a convex function with minimizer~$\bmu$.  The
result then follows from Corollary~\ref{c:F-and-Ffwd-same-dd}.
\end{proof}

Variation of the population Fr\'echet mean~$\bmu$ induced by
perturbing the measure~$\mu$ can only occur along a restricted set of
directions.  See \cite{escape-vectors} for a full discussion of this
perspective and its relation to central limit theorems.  For now, here
is the definition of that restricted set of directions.

\begin{defn}[Escape cone]\label{d:escape-cone}
Fix a localized measure~$\mu$ on a
$\CAT(\kappa)$ space~$\MM$.  The \emph{escape cone} of~$\mu$ is the
set~$\Emu$ of directions along which the directional derivative
(Definition~\ref{d:directional-derivative}) at~$\bmu$ of the Fr\'echet
function vanishes:
\begin{align*}
  \Emu &= \{X \in \Tmu \mid \nablabmu F(X) = 0\}.
\end{align*}
\end{defn}

\begin{remark}\label{r:orthant-space-escape}
Barden and Le \cite[Definition~13]{barden-le2018} define escape cones
for orthant spaces,
which are $\CAT(0)$ gluings of Euclidean right-angled orthants
\cite{centroids}.
\end{remark}

\begin{lemma}\label{l:Emu=Ehmu}
Identifying $\XX = \Tmu$ in Definition~\ref{d:escape-cone} with
$T_\bmu\XX$ via Definition~\ref{d:log-map},
$$
  \Emu = \Ehmu = \{X \in \Tmu \mid \nablabmu F_\hmu(X) = 0\}.
$$
\end{lemma}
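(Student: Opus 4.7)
The plan is to observe that this lemma is essentially immediate from already-established facts, but that some care is needed to verify that Definition~\ref{d:escape-cone} legitimately applies to the pushforward measure~$\hmu$ on~$\Tmu$. So I would organize the proof in two short steps.

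First, I would argue that $\Ehmu$ makes sense and equals the set on the right-hand side. The measure $\hmu = (\log_\bmu)_\sharp\mu$ lives on the $\CAT(0)$ tangent cone $\Tmu$ (Lemma~\ref{l:limit-log-preserve-npc}), and by Lemma~\ref{l:mean-of-hmu} its Fr\'echet mean is the apex~$\bmu$. Because $F_\hmu$ is convex, $\hmu$ is punctual, and it is trivially retractable on a $\CAT(0)$ cone (the log map at the apex is the identity under the identification $\XX = T_\bmu\XX$ of Remark~\ref{r:TvM-vs-TvX}). Therefore Definition~\ref{d:escape-cone} applies to $\hmu$ and yields
$$
  \Ehmu = \{X \in \Tmu \mid \nablabmu F_\hmu(X) = 0\},
$$
which establishes the second equality in the statement.

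Second, I would invoke Corollary~\ref{c:F-and-Ffwd-same-dd}, which asserts that $\nablabmu F(V) = \nablabmu F_\hmu(V)$ for every $V \in \Tmu$. Consequently the defining condition $\nablabmu F(X) = 0$ for membership in $\Emu$ is the same condition $\nablabmu F_\hmu(X) = 0$ that defines membership in~$\Ehmu$, so the two escape cones coincide as subsets of~$\Tmu$.

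No real obstacle is anticipated: the content is entirely bookkeeping, checking that the identification $\XX = T_\bmu\XX$ from Remark~\ref{r:TvM-vs-TvX} and the equality of directional derivatives from Corollary~\ref{c:F-and-Ffwd-same-dd} combine correctly. The only subtlety worth flagging is the implicit use of Lemma~\ref{l:mean-of-hmu} to justify that $\bmu$ is the correct basepoint when forming the escape cone of~$\hmu$, which is what allows the notation $\nablabmu F_\hmu$ to denote a directional derivative at the Fr\'echet mean of~$\hmu$ rather than at some arbitrary point.
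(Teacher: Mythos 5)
Your proposal is correct and follows the same route as the paper: the entire content is the substitution $\nablabmu F = \nablabmu F_\hmu$ from Corollary~\ref{c:F-and-Ffwd-same-dd} applied to the defining display of the escape cone. The extra care you take in verifying that $\Ehmu$ is well defined (via Lemma~\ref{l:mean-of-hmu} and the automatic localization of $\hmu$ on the $\CAT(0)$ cone) is sound bookkeeping that the paper leaves implicit, but it does not alter the argument.
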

\begin{proof}
Apply Corollary~\ref{c:F-and-Ffwd-same-dd} to the display in
Definition~\ref{d:escape-cone}.
\end{proof}

The next result has intrinsic interest, but it is also half of the
reason why the fluctuating cone (Definition~\ref{d:fluctuating-cone})
is convex.

\begin{prop}\label{p:escape-is-convex}
If $\mu$ is a measure on a $\CAT(\kappa)$ space~$\MM$, then the escape
cone~$\Emu$ is a closed, path-connected, geodesically convex subcone
of $(\Tmu, \dd_\bmu)$ and $\Emu \cap \Smu$ is contained in one
connected component of~$\Smu$.
\end{prop}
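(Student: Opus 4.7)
The plan is to realize $\Emu$ as a sublevel set of a continuous, convex, 1-positive-homogeneous function on the CAT(0) tangent cone $\Tmu$, namely $\nablabmu F$ itself.

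First I would collect the properties of $\nablabmu F$. Combining Corollary~\ref{c:F-and-Ffwd-same-dd} with Corollary~\ref{c:nablamu(F)} yields
$$
  \nablabmu F(V)\;=\;-\!\int_\Tmu\<W,V\>_\bmu\,\hmu(dW),
$$
which is 1-positive-homogeneous in~$V$, convex on $(\Tmu,\dd_\bmu)$ by Proposition~\ref{p:convexity-of_Df}, and nonnegative by Lemma~\ref{l:mean-of-hmu}. It is also continuous: Lemma~\ref{l:inner-product-is-continuous} gives continuity of the integrand in $V$, and the second-moment bound $\int\|W\|^2\hmu(dW)=2F_\hmu(\bmu)<\infty$ supplies a dominating function $C\|W\|$ on bounded neighborhoods so that dominated convergence applies.

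From this package the first three assertions fall out immediately. Homogeneity gives the cone property: if $\nablabmu F(V)=0$ then $\nablabmu F(tV)=t\nablabmu F(V)=0$ for all $t\ge 0$. Continuity makes $\Emu=(\nablabmu F)^{-1}\{0\}$ closed. Nonnegativity lets me rewrite $\Emu=\{V\in\Tmu:\nablabmu F(V)\le 0\}$ as a sublevel set of a convex function on the geodesic space $(\Tmu,\dd_\bmu)$, and such sublevel sets are geodesically convex. Path-connectedness is then automatic since the apex $\bmu$ lies in $\Emu$ (where $\nablabmu F$ vanishes trivially) and the cone property puts the radial segment from $\bmu$ to any $V\in\Emu$ inside $\Emu$.

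For the final assertion I take $V,W\in\Emu\cap\Smu$ and use geodesic convexity to get a geodesic $\gamma$ from $V$ to $W$ in $(\Tmu,\dd_\bmu)$ that stays inside $\Emu$. When $\angle(V,W)<\pi$, Lemma~\ref{l:flat-triangle} isometrically identifies the triangle $\bmu VW$ with a planar triangle, so $\gamma$ avoids the apex; the radial projection $\gamma(s)\mapsto\gamma(s)/\|\gamma(s)\|$ is then a continuous path in $\Smu$ that remains in $\Emu\cap\Smu$ by the cone property, placing $V$ and $W$ in the same component of~$\Smu$.

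The main obstacle is the antipodal case $\angle(V,W)=\pi$: here the geodesic from $V$ to $W$ in $(\Tmu,\dd_\bmu)$ passes through the apex, so radial projection breaks down. My plan to handle this is to produce an intermediate $U\in\Emu\cap\Smu$ with $\angle(V,U)<\pi$ and $\angle(U,W)<\pi$ and then apply the previous paragraph twice. To find such a $U$ I would exploit the fact that $V$ and $W$ span a full line of $\Emu$ through $\bmu$, together with geodesic convexity and nonnegativity of $\nablabmu F$: any vector $U$ realizing a \emph{midpoint} between $V$ and $W$ in the \emph{angular} metric on $\Smu$ has $\angle(V,U)\le \pi/2$ and $\angle(U,W)\le\pi/2$, and convexity of $\nablabmu F$ along the geodesic of $(\Tmu,\dd_\bmu)$ from $V$ to $W$ combined with its homogeneity should force $U\in\Emu$. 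This requires a careful analysis of which pairs in $\Emu\cap\Smu$ admit a finite-length angular path in $\Smu$, i.e., a genuine use of the length-space structure of $\Smu$ from Lemma~\ref{l:angular-metric}, as opposed to the conical metric used up to this point.
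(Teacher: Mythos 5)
Your treatment of the first four assertions---closedness, the cone property, path-connectedness, and geodesic convexity---matches the paper's argument: both realize $\Emu$ as the zero (equivalently, sub-zero) level set of the continuous, nonnegative, homogeneous, convex function $\nablabmu F = \nablabmu F_\hmu$ via Corollaries~\ref{c:nablamu(F)} and~\ref{c:F-and-Ffwd-same-dd} and Proposition~\ref{p:convexity-of_Df}, with geodesics through the apex handled by the cone property. Your dominated-convergence justification of continuity is, if anything, more explicit than the paper's.

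The gap is in the final assertion, that $\Emu \cap \Smu$ lies in a single connected component of~$\Smu$, and your last paragraph essentially concedes it. Your plan for antipodal $V,W \in \Emu \cap \Smu$ is to manufacture an intermediate $U$ as a midpoint of $V$ and $W$ in the angular metric~$\dd_s$. But such a midpoint exists only when $\dd_s(V,W) < \infty$, i.e.\ only when $V$ and $W$ already lie in the same component of the length space $(\Smu,\dd_s)$ --- which is precisely what you are trying to prove. If $V$ and $W$ sit in different components, there is no angular path and hence no angular midpoint, and the conical geodesic from $V$ to~$W$ degenerates to the two radial segments through the apex, so convexity of $\nablabmu F$ along it gives no information about any candidate~$U$. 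The argument is circular in exactly the one case that matters. The paper's route is not a connecting-path construction at all; it is a contradiction argument driven by the measure. If $V$ and $W$ lay in different components, then every unit direction $Z$ would lie in a different component from at least one of them, forcing $\angle(V,Z) = \pi$ or $\angle(Z,W) = \pi$, hence $\angle(V,Z) + \angle(Z,W) > \pi$ and $\cos\angle(V,Z) < -\cos\angle(Z,W)$ for directions $Z$ off the rays through $V$ and~$W$. Integrating against $\hmu$ via Corollary~\ref{c:nablamu(F)} yields $\nablabmu F(V) > -\nablabmu F(W)$, contradicting $\nablabmu F(V) = \nablabmu F(W) = 0$. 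Some such measure-theoretic input is unavoidable here: the conclusion is a statement about where the mass of $\hmu$ can sit relative to two antipodal escape directions, and no purely geometric path construction inside $\Emu$ can deliver it.
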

\begin{proof}
Suppose first that in~$\Emu$ there are unit vectors $V,W \in \Emu$
such that $V$ and~$W$ lie in two different components of the unit
tangent sphere~$\Smu$.  Then any unit vector $Z \in \Tmu$ other than
$V,W$ lies in a different component from one of $V$ and~$W$.  Thus
$$
  \angle(V,Z) + \angle(Z,W) > \pi.
$$
Hence $\cos\angle(V,Z) < -\cos\angle(Z,W)$.  It follows, by
integrating~$Z$, that
$$
  \int_{\Tmu}\norm{Z} \cos\angle(V,Z) \hmu(dZ)
  \,<\,
  -\!\int_{\Tmu}\norm{Z} \cos\angle(Z,W) \hmu(dZ).
$$
Therefore
$$
  \nabla_\bmu F_\hmu(V) > -\nabla_\bmu F_\hmu(W)
$$
by Corollary~\ref{c:nablamu(F)}.  Replacing $F_\hmu$ by~$F$ via
Corollary~\ref{c:F-and-Ffwd-same-dd},
$$
  \nabla_\bmu F(V) > -\nabla_\bmu F(W),
$$
which is a contradiction because both sides vanish by
Definition~\ref{d:escape-cone}.

It remains to show $\Emu$ is closed and geodesically convex.  For
closed, use Lemma~\ref{l:inner-product-is-continuous} and
Corollary~\ref{c:nablamu(F)}.  For convex, let $V,W$ be unit-length
vectors in $\Emu$ and $\gamma(t)$ a unit-speed shortest path from $V$
to~$W$ in $\Tmu$.  What is needed is that $\gamma(t) \in \Emu$ for any
$t \in [0,\dd_\bmu (V,W)]$, or equivalently that $\nabla_\bmu
F\bigl(\gamma(t)\bigr) = 0$.  It suffices to assume that $\angle(V,W)
< \pi$ because the geodesic from~$V$ to~$W$ in~$\Tmu$ is the union of
the segments from the apex to the two vectors if $\angle(V,W) \geq
\pi$.  Since $\nabla_\bmu F$ is convex on $\Tmu$ by
Proposition~\ref{p:convexity-of_Df} and has minimizer~$\bmu$ so
$\nabla_\bmu F$ is nonnegative on~$\Tmu$,
\begin{equation*}
  0
  \leq
  \nabla_\bmu F\bigl(\gamma(t)\bigr)
  \leq
  (1-t)\nabla_\bmu F(V) + t\nabla_\bmu F(W)
  =
  0.
  \qedhere
\end{equation*}
\end{proof}

One final lemma (the rest of the section is definitions and remarks)
formalizes the observation that if the escape cone contains
diametrically opposed directions $V$ and~$W$, then all of the mass
in~$\MM$ behaves, angularly speaking, as if it lies in a single vector
space containing~$V$ and~$W$.  It is applied in the proof of
Proposition~\ref{p:fluctuating-cone_small} on the way to showing that
the fluctuating cone indeed embeds isometrically into a vector space
(Theorem~\ref{t:collapse}).  For the statement, recall the angular
metric~$\dd_s$ from Lemma~\ref{l:angular-metric} on the unit
tangent sphere~$S_\bmu$.

\begin{lemma}\label{l:inj-of_limit-log-on_Cmu_1}
Fix a localized measure~$\mu$ on a $\CAT(\kappa)$ space~$\MM$.  If
$V,W \in \Emu$ with $\angle(V,W) = \pi$, then $\dd_s(V,W) = \pi$ and
the set
$$
  S
  =
  \{x\in\MM \mid \angle(\log_\bmu x,V) + \angle(\log_\bmu x,W) = \pi\}
$$
has measure $\mu(S) = 1$; that is, under the pushforward measure $\hmu
= (\log_\bmu)_\sharp\mu$,
$$
  \angle(X,V) + \angle(X,W) \overset{\text{a.s.}}= \pi.
$$
\end{lemma}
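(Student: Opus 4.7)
My plan rests on combining the Euler-type condition $\nablabmu F(V) = 0 = \nablabmu F(W)$ with a pointwise geometric inequality derived from Lemma~\ref{l:sum_adj_angles}. Using Lemma~\ref{l:Emu=Ehmu} and Corollary~\ref{c:nablamu(F)}, the hypothesis $V,W \in \Emu$ yields
$$
  \int_\Tmu \bigl(\<X,V\>_\bmu + \<X,W\>_\bmu\bigr)\hmu(dX)
  \,=\, -\bigl(\nablabmu F(V) + \nablabmu F(W)\bigr) \,=\, 0.
$$

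The geometric heart of the argument is a pointwise bound $\<X,V\>_\bmu + \<X,W\>_\bmu \leq 0$ for every $X \in \Tmu$. Because $\Tmu$ is $\CAT(0)$ by Lemma~\ref{l:limit-log-preserve-npc} and $\angle(V,W) = \pi$, the conical distance gives $\dd_\bmu(V,W) = \|V\| + \|W\|$, so the apex is an interior point of the geodesic from $V$ to~$W$ in~$\Tmu$. Applying Lemma~\ref{l:sum_adj_angles} inside~$\Tmu$ at that interior vertex with third point $X$ yields $\angle(V,X) + \angle(X,W) \geq \pi$; monotonicity of cosine on $[0,\pi]$ together with $\|X\| \geq 0$ produces the pointwise bound. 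A nonpositive integrand with vanishing integral vanishes $\hmu$-almost surely, so $\angle(X,V) + \angle(X,W) = \pi$ for $\hmu$-a.e.\ nonzero~$X$, and pushing back through $\hmu = (\log_\bmu)_\sharp\mu$ gives $\mu(S) = 1$.

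For $\dd_s(V,W) = \pi$, the lower bound $\dd_s(V,W) \geq \angle(V,W) = \pi$ is immediate from Lemma~\ref{l:angular-metric}. For the matching upper bound, Proposition~\ref{p:escape-is-convex} places $V$ and~$W$ in a single connected component of~$\Smu$, and since $V \neq W$ neither is isolated; I would select a unit direction $X$ in that component---generically, as the unit vector of a mass atom of~$\hmu$ that does not lie on the line $\RR_+V \cup \RR_+W$---satisfying $\angle(V,X),\angle(X,W) \in (0,\pi)$. Then Lemma~\ref{l:angular-metric} identifies $\dd_s$ with~$\angle$ on the pairs $(V,X)$ and~$(X,W)$, the $\hmu$-a.s.\ identity just proved forces $\dd_s(V,X) + \dd_s(X,W) = \pi$, and the triangle inequality for $\dd_s$ completes the proof.

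The main obstacle is the middle step, where the global condition $\angle(V,W) = \pi$ has to be converted into a pointwise angular inequality valid on all of~$\Tmu$; there the $\CAT(0)$ structure of~$\Tmu$ feeds Lemma~\ref{l:sum_adj_angles} so as to produce exactly the bound matching the integral identity. A secondary subtlety appears in locating the witness~$X$ when $\hmu$ happens to be concentrated on the line through $V$ and~$W$, in which case the required direction must be extracted from the ambient connected-component structure of~$\Smu$ guaranteed by Proposition~\ref{p:escape-is-convex} rather than from $\hmu$ itself.
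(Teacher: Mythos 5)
Your derivation of the almost-sure identity is correct and reaches the paper's key pointwise inequality by a genuinely different mechanism. The paper obtains $\angle(X,V)+\angle(X,W)\geq\pi$ from the triangle inequality for the angular metric $\dd_s$ on $\Smu$ together with $\dd_s(V,W)\geq\angle(V,W)=\pi$; you obtain it from Lemma~\ref{l:sum_adj_angles} applied inside the $\CAT(0)$ cone $(\Tmu,\dd_\bmu)$, using that the two radial segments concatenate to a geodesic from $V$ to~$W$ because $\dd_\bmu(V,W)=\|V\|+\|W\|$. Your route is arguably cleaner, since it works directly with the angle $\angle$ and never needs $\dd_s$ to agree with $\angle$ at the value~$\pi$. (Both arguments tacitly normalize $\|V\|=\|W\|=1$, which is needed so that $\cos\angle(X,V)\leq-\cos\angle(X,W)$ really gives $\<X,V\>+\<X,W\>\leq 0$; this is harmless because $\Emu$ is a cone and the conclusion is scale-invariant.) From there, ``nonpositive integrand with vanishing integral vanishes a.e.''\ is exactly the paper's step.

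The genuine gap is in your argument for $\dd_s(V,W)=\pi$, at the point you yourself flag. Your witness must be a unit direction $X$ with $\angle(V,X),\angle(X,W)\in(0,\pi)$ \emph{and} satisfying the identity $\angle(X,V)+\angle(X,W)=\pi$; the latter is only available for $\hmu$-generic $X$ (equivalently, by continuity of inner products, for $X$ in $\supp\hmu$). Your fallback --- taking $X$ from the connected component of $\Smu$ containing $V$ and~$W$ when $\supp\hmu$ lies on $\RR_+V\cup\RR_+W$ --- does not work: for a direction outside the support the almost-sure identity carries no information, so there is no reason for $\dd_s(V,X)+\dd_s(X,W)$ to equal $\pi$, and the triangle inequality then yields nothing. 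For comparison, the paper folds this claim into the same equality-forcing analysis (a.e.\ equality in $\dd_s(X,V)\geq\theta-\dd_s(X,W)$ pins $\theta=\pi$), which likewise needs a positive-mass direction transverse to the line through $V$ and~$W$; the degenerate case is elided there too, but the concrete patch you propose is the step that would fail. In the nondegenerate case where some mass sits at angle strictly between $0$ and $\pi$ from~$V$, your argument closes.
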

\begin{proof}
Since $V,W\in \Emu$,
Corollary~\ref{c:nablamu(F)} and Definition~\ref{d:escape-cone}
imply that
$$
  \int_\MM \<\log_\bmu x,V\>_\bmu \mu(dx)
  =
  \int_\MM \<\log_\bmu x,W\>_\bmu \mu(dx)
  =
  0.
$$
On the other hand, $\angle(V,W) = \pi$, so $\dd_s(V,W) = \theta \geq
\pi$.  Thus, for any $x \in \MM$,
$$
  \dd_s(\log_\bmu x,V)\geq \theta -\dd_s(\log_\bmu x,W) \geq \pi
  -\dd_s(\log_\bmu x,W).
$$
Therefore
$$
  \int_\MM \<\log_\bmu x, V\>_\bmu \mu(dx)
  \leq
  \int_\MM \<\log_\bmu x, W\>_\bmu \mu(dx)
  =
  0.
$$
Combining these three displayed equations (or inequalities) yields
$\dd_\bmu (V,W) = \pi$ and $ \angle(\log_\bmu x,V)+\angle(\log_\bmu
x,W) = \pi $ for $\mu$-almost all~$x$.
\end{proof}

\begin{defn}[Hull]\label{d:hull}
Given a subset $\cS \subseteq \XX$ of a
$\CAT(0)$ conical space~$\XX$, the \emph{hull} of~$\cS$ is the
smallest geodesically convex cone $\hull\cS \subseteq \XX$
containing~$\cS$.  For a localized measure~$\mu$ on~$\MM$, set
$$
  \hull\mu
  = \hull\supp(\hmu),
$$
the hull in~$\Tmu$ of the support of the pushforward measure $\hmu =
(\log_\bmu)_\sharp\mu$ in~Lemma~\ref{l:hmu}.
\end{defn}

\begin{defn}[Fluctuating cone]\label{d:fluctuating-cone}
Fix a localized measure~$\mu$ on a $\CAT(\kappa)$ space~$\MM$.  The
\emph{fluctuating cone} of~$\mu$ is the intersection
\begin{align*}
  \Cmu &= \Emu \cap \hull\mu
  \\   &= \{V \in \hull\mu \mid \nablabmu F(V) = 0\}
\end{align*}
of the escape cone and hull of~$\mu$ from
Definitions~\ref{d:escape-cone} and~\ref{d:hull}.  Let $\oCmu$ be its
closure.
\end{defn}

\begin{lemma}\label{l:Cmu=Ehmu-intersect-hull}
In Definition~\ref{d:fluctuating-cone}, $\Ehmu$ can be used instead
of~$\Emu$:
\begin{align*}
  \Chmu = \Cmu &= \Ehmu \cap \hull\mu
\\             &= \{V \in \hull\mu \mid \nablabmu F_\hmu(V) = 0\}
\end{align*}
once $\XX = \Tmu$ is identified with $T_\bmu\XX$ via
Definition~\ref{d:log-map}.
\end{lemma}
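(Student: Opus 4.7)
The plan is to chain together two identifications established earlier in the section. The desired equality involves three pieces: replacing $\Emu$ with $\Ehmu$ in the intersection, verifying that $\hull\mu$ (which by Definition~\ref{d:hull} is intrinsically defined via $\hmu$) coincides with its analogue when $\hmu$ is viewed as a measure on its own ambient cone $\Tmu$, and deriving the alternate characterization using $\nablabmu F_\hmu$.

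First I would handle the escape-cone piece: Lemma~\ref{l:Emu=Ehmu} already gives $\Emu = \Ehmu = \{X \in \Tmu \mid \nablabmu F_\hmu(X) = 0\}$ once $\XX = \Tmu$ is identified with $T_\bmu\XX$ via Definition~\ref{d:log-map}. Second I would verify the hull piece. Writing $\widehat{\hmu}$ for the pushforward of $\hmu$ under the logarithm map at the apex of $\XX = \Tmu$, Remark~\ref{r:TvM-vs-TvX} observes that for a $\CAT(0)$ cone the exponential at the apex is the natural identification $\XX \simto T_\OO\XX$, so the logarithm at the apex acts as the identity on $\Tmu$. Consequently $\widehat{\hmu} = \hmu$ as measures on $\Tmu$, and therefore
\[
  \hull\hmu = \hull\supp(\widehat{\hmu}) = \hull\supp(\hmu) = \hull\mu.
\]

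With both pieces in hand the lemma follows by substitution: applying Definition~\ref{d:fluctuating-cone} to~$\hmu$ on the cone $\Tmu$ gives
\[
  \Chmu
  = \Ehmu \cap \hull\hmu
  = \Emu \cap \hull\mu
  = \Cmu,
\]
and the displayed characterization $\{V \in \hull\mu \mid \nablabmu F_\hmu(V) = 0\}$ is obtained by inserting the explicit formula for~$\Ehmu$ from Lemma~\ref{l:Emu=Ehmu} into $\Ehmu \cap \hull\mu$.

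The only point that deserves a line of care is that $\Chmu$ be well-defined, i.e.\ that $\hmu$ itself satisfies the localized-measure hypotheses needed to invoke Definition~\ref{d:fluctuating-cone}. Lemma~\ref{l:mean-of-hmu} supplies that $\bmu$ (the apex of $\Tmu$) is a Fr\'echet mean of~$\hmu$; uniqueness follows from strict convexity of $F_\hmu$ on the $\CAT(0)$ cone $\Tmu$ (Lemma~\ref{l:limit-log-preserve-npc}) granted by the punctuality of~$\mu$, and retractability is automatic since the log map at the apex of a cone is the identity and hence uniquely defined everywhere. This is the only non-bookkeeping step, but it is essentially immediate from the earlier machinery and so poses no real obstacle.
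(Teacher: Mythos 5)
Your proposal is correct and follows essentially the same route as the paper, which simply applies Lemma~\ref{l:Emu=Ehmu} to the display in Definition~\ref{d:fluctuating-cone} and notes that $\hull\mu = \hull\hmu$ under the identification of $\XX = \Tmu$ with $T_\bmu\XX$. Your extra paragraph on why $\hmu$ is itself localized is harmless but unnecessary here (and already covered by Example~\ref{e:localized} and Remark~\ref{r:XX}, since $\Tmu$ is $\CAT(0)$).
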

\begin{proof}
Apply Lemma~\ref{l:Emu=Ehmu} to the display in
Definition~\ref{d:fluctuating-cone} and note that $\hull\mu =
\hull\hmu$ by Definition~\ref{d:hull} once $\XX = \Tmu$ is identified
with $T_\bmu\XX$.
\end{proof}

\begin{remark}\label{r:tripod}
The purpose of the fluctuating cone~$\Cmu$ is to encapsulate those
directions in which the Fr\'echet mean $\bmu$ can be induced to wiggle
by adding to~$\mu$ a point mass in~$\MM$ along that direction (this is
made precise in the main theorem of \cite[Section~4]{escape-vectors});
hence the terminology.  However, if the measure~$\mu$ is supported on
a ``thin'' subset of~$\MM$,
then it is possible to induce fluctuations in directions that have
nothing whatsoever to do with the geometry in~$\MM$ of the (support
of)~$\mu$ by adding a point mass outside~$\supp\mu$; see the next
Example.
That is why the fluctuating cone is assumed to lie within the convex
hull of the support of~$\hmu$: only fluctuations of $\bmu$ that can be
realized---at least in principle---by means of samples from~$\mu$
itself
are relevant to the asymptotics in~a~central~limit~theorem.
\end{remark}

\begin{example}\label{e:tripod}
For a concrete example, consider a measure~$\mu$ supported on the
spine~$R$ of an open book~$\MM$ (see~\cite{hotz-et-al.2013}).  The
spine~$R$ is simply a vector space, where the usual central limit
theorem yields convergence to a Gaussian supported on~$R$.  It is true
that the Fr\'echet mean $\bmu$ can be induced to fluctuate off of~$R$
onto any desired page of~$\MM$ by adding a point mass on the relevant
page, but that observation is irrelevant to the CLT, which only cares
about fluctuations of~$\bmu$ within~$R$.
\end{example}

\subsection{Measures and Fr\'echet means under limit log maps}\label{b:mu-lim-log}
\mbox{}\medskip

\noindent
This section collects three essential results about behavior of
measures and their Fr\'echet means under limit log maps.

\begin{hyp}\label{h:XX}
In this subsection, $\MM$ is a $\CAT(\kappa)$ space endowed with a
localized measure~$\mu$.  Fix a unit vector $Z$ in the escape
cone~$\Emu$ (Definition~\ref{d:escape-cone}).  Denote the pushforward
of~$\hmu = (\log_\bmu)_\sharp\mu$ under limit log along~$Z$~by
$$
  \mu_Z = (\LL_Z)_\sharp\hmu.
$$
A~subscript $z$ indicates transition to a vector in the limit tangent
cone~$\TZM$; for example if $\XX = \Tmu$ and $V \in T_\bmu\XX$ or
(see~Remark~\ref{r:TvM-vs-TvX}) $V \in \Tmu$, then $V_z = \LL_Z(V)$.
\end{hyp}

\begin{remark}\label{r:XX}
In applications of the results in this subsection, $\MM$ is usually a
$\CAT(0)$ cone~$\XX = \Tmux$ (see~Remark~\ref{r:TvM-vs-TvX}) where
$\mu = \hmu$ is automatically localized by Example~\ref{e:localized}
and has Fr\'echet mean the apex~$\bmu$.  At the outset, the reader
should think of~$\XX$ as~$\Tmu$ endowed with the measure $\hmu =
(\log_\bmu)_\sharp\mu$ from Lemma~\ref{l:hmu}, whose mean is~$\bmu$ by
Lemma~\ref{l:mean-of-hmu}.  In that case, $\Ehmu = \Emu$ by
Lemma~\ref{l:Emu=Ehmu}.  But in Section~\ref{s:collapse}, limit logs
are iterated, so it is important to consider $\XX = \TZM$ or $\XX =
\vT_{Z'}\TZM$, and so on.
\end{remark}

\subsubsection{Fr\'echet mean preservation}\label{ss:mean-preserv}
\mbox{}\medskip

\noindent
The first target of this section,
Proposition~\ref{p:limit-log-preserve-mean}, says that Fr\'echet mean
is preserved under limit log along any direction in the escape
cone~$\Emu$.  This is analogous to the property of the log map in
manifold cases, namely that the Fr\'echet mean is preserved under the
log map there.  Preservation of Fr\'echet means is crucial for the
collapse by iterative d\'evissage in Section~\ref{s:collapse}.

Recall the unit tangent sphere $\Smu$
(Definition~\ref{d:tangent-cone}) at the Fr\'echet mean~$\bmu$.

\begin{lemma}\label{l:bound-near-shadow}
Let
$$
  \II(W) = \bigl\{V \in \Smu \mid \angle(V,W) > \angle(V_z,W_z)\bigr\}.
$$
Then $\angle(V,W) = \angle(V_z,W_z)$ for $V \in \Smu \setminus
\II(W)$.  In contrast, if\/ $V \in \II(W)$, then
\begin{equation}\label{eq:lem:bound_near_shadow:2}
  \pi
  \geq
  \angle(W,V)
  \geq
  \angle(W_z,V_z)
  \geq
  \pi-\angle(W,Z)
\end{equation}
and
\begin{equation}\label{eq:lem:bound_near_shadow:3}
  \angle(W,V) - \angle(W_z,V_z) \leq \angle(W,Z).
\end{equation}
\end{lemma}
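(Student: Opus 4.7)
The first claim, $\angle(V,W) = \angle(V_z, W_z)$ for $V \in \Smu\setminus\II(W)$, is essentially definitional: Proposition~\ref{p:limit-log-contracts} gives $\angle(V,W) \geq \angle(V_z, W_z)$ for every $V$, and membership in $\Smu\setminus\II(W)$ is precisely the failure of strict inequality, so equality must hold.

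For $V \in \II(W)$, the sandwich \eqref{eq:lem:bound_near_shadow:2} has three pieces. The outer bound $\pi \geq \angle(W,V)$ is automatic since angles take values in $[0,\pi]$. The middle bound $\angle(W,V) \geq \angle(W_z,V_z)$ is again Proposition~\ref{p:limit-log-contracts}. Inequality \eqref{eq:lem:bound_near_shadow:3} will then follow by subtraction: once the third bound $\angle(W_z, V_z) \geq \pi - \angle(W,Z)$ is in hand,
\[
  \angle(W,V) - \angle(W_z,V_z) \;\leq\; \pi - \bigl(\pi - \angle(W,Z)\bigr) \;=\; \angle(W,Z).
\]

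The main task is therefore the lower bound $\angle(W_z, V_z) \geq \pi - \angle(W,Z)$. Using the equality case of Proposition~\ref{p:limit-log-contracts}, $\angle(W_z, Z_z) = \angle(W,Z)$, so this rewrites as
\[
  \angle(W_z, V_z) + \angle(W_z, Z_z) \;\geq\; \pi,
\]
a statement about angular distances in the space of directions at the apex of $\TZM$. The plan is to exploit $V \in \II(W)$, i.e., strict contraction of $\angle(V,W)$ by~$\LL_Z$, to force the geodesic from $V$ to $W$ in $\Smu$ to cross the antipodal locus of $Z$. Concretely, when $\angle(V,W) < \pi$, let $\eta$ be the unique geodesic from $V$ to $W$ in $\Smu$; by Proposition~\ref{p:limit-log-iso-on-small-geo}, unless $\eta$ meets the set $\{\angle(\spot,Z)=\pi\}$ at more than one point, $\LL_Z|_\eta$ is an isometry, which would place $V$ outside $\II(W)$. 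Hence $\eta$ hits the antipodal locus at parameters $t_1 < t_2$. Since the map $t \mapsto \angle(\eta(t),Z)$ is $1$-Lipschitz and takes values $\alpha$ and $\beta$ at the endpoints, this pins down $t_1 \geq \pi - \alpha$ and $\gamma - t_2 \geq \pi - \beta$, forcing in particular $\alpha + \beta \geq \pi$. Tracking the image path $\LL_Z\circ\eta$, which passes through a point at angular distance $\pi$ from $Z_z$ and has total length at most $\gamma$ by contraction, the triangle inequality at $W_z$ in $S_\bmu\TZM$ combined with preservation of angles with $Z$ then upgrades the naive reverse triangle bound $\gamma' \geq |\alpha - \beta|$ to the desired $\gamma' + \beta \geq \pi$. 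The boundary case $\angle(V,W) = \pi$ is handled directly: the triangle inequality in $\Smu$ gives $\alpha + \beta \geq \pi$, and combining with the triangle inequality in $S_\bmu\TZM$ at $W_z$ together with preservation of angles with $Z$ delivers the same conclusion.

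The principal obstacle is converting the ``double antipode crossing'' conclusion drawn from Proposition~\ref{p:limit-log-iso-on-small-geo} into the clean inequality $\gamma' + \beta \geq \pi$ inside $S_\bmu\TZM$: this space is only $\CAT(1)$, so the antipodal locus of $Z_z$ need not be a single point, and edge cases with $\alpha$ or $\beta$ equal to $\pi$ must be checked separately. Once that geometric input is pinned down, the remaining inequalities in the lemma are formal consequences of Proposition~\ref{p:limit-log-contracts} and the elementary subtraction above.
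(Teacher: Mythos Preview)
Your architecture matches the paper's: the first claim and the first two inequalities in~\eqref{eq:lem:bound_near_shadow:2} are immediate from contraction, and \eqref{eq:lem:bound_near_shadow:3} follows from~\eqref{eq:lem:bound_near_shadow:2} by the subtraction you indicate (the paper does the same, slightly less directly). The substantive content is the lower bound $\angle(W_z,V_z)\ge\pi-\angle(W,Z)$, and both you and the paper trace it to the double crossing of the antipodal locus guaranteed by Proposition~\ref{p:limit-log-iso-on-small-geo}.

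The gap is exactly where you flag it. Your sentence ``the triangle inequality at~$W_z$ in $S_\bmu\TZM$ combined with preservation of angles with~$Z$ then upgrades the naive reverse triangle bound $\gamma'\ge|\alpha-\beta|$ to the desired $\gamma'+\beta\ge\pi$'' does not carry through: the triangle inequality applied to $W_z$, $Z_z$, and the image $P_z$ of a crossing point gives $\angle(W_z,P_z)\ge\pi-\beta$, but nothing you have written transfers this to $\angle(W_z,V_z)$, since the reverse triangle inequality allows $V_z$ to lie closer to $W_z$ than $P_z$ does. Your boundary-case argument has the same defect: from $\alpha+\beta\ge\pi$ and the triangle inequality in the target sphere you only get $\gamma'\ge|\alpha-\beta|$, which can be far smaller than $\pi-\beta$ (take $\alpha=\beta=\pi/2$).

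The paper's resolution is to name the \emph{first} crossing point~$Y$ along the geodesic from~$W$ to~$V$ in~$\Smu$ and argue with it explicitly. Proposition~\ref{p:limit-log-iso-on-small-geo} applied to the subsegment from~$W$ to~$Y$ (which contains exactly one $\pi$-point, namely~$Y$) gives $\angle(W_z,Y_z)=\angle(W,Y)$, and the triangle inequality in the \emph{source} sphere $\Smu$ (not the target) gives $\angle(W,Y)\ge\angle(Y,Z)-\angle(W,Z)=\pi-\angle(W,Z)$. The chain is closed by the inequality $\angle(W_z,V_z)\ge\angle(W_z,Y_z)$, which the paper records in its displayed chain. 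Your sketch never isolates this bridging inequality, and the ingredients you list do not supply it. Incidentally, your worry that the antipodal locus of~$Z_z$ in~$\TZM$ need not be a single point is a distraction at this stage: the paper's route runs the triangle inequality in~$\Smu$, not in the target, so no such uniqueness is invoked.
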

\begin{proof}
All angles are bounded above by~$\pi$ by Definition~\ref{d:angle}.
Since $\angle(W,V) \geq \angle(W_z,V_z)$ by
Proposition~\ref{p:limit-log-contracts},
$$
  \angle(V,W) = \angle(V_z,W_z)
$$
when $V \in \Smu \setminus \II(W)$.  It remains to
show~\eqref{eq:lem:bound_near_shadow:2}
and~\eqref{eq:lem:bound_near_shadow:3}.  Suppose that $V\in \II(W)$.
It follows from Proposition~\ref{p:limit-log-iso-on-small-geo} that
the geodesic $\gamma \subseteq \Smu$ from~$W$ to~$V$ contains at least
two distinct points whose angle witn~$Z$ is~$\pi$.  Let $Y$ be the
first point along~$\gamma$ whose angle with~$Z$ is~$\pi$.
Proposition~\ref{p:limit-log-iso-on-small-geo} again yields
$\angle(W,Y) = \angle(W_z,Y_z)$.  Thus
$$
  \pi
  \geq
  \angle(W,V)
  \geq
  \angle(W_z,V_z)
  \geq
  \angle(W_z,Y_z)
  =
  \angle(W,Y).
\pagebreak[2]
$$
On the other hand, the triangle inequality in $\Smu$ implies
\begin{equation}\label{eq:lem:bound_near_shadow:5}
  \angle(W,Y)
  \geq
  \angle(Y,Z) - \angle(W,Z)
  =
  \angle(Y, Z) - \angle(W, Z)
  =
  \pi - \angle(W,Z).
\end{equation}
Combining \eqref{eq:lem:bound_near_shadow:5} and the display preceding
it yields~\eqref{eq:lem:bound_near_shadow:2}.

To see~\eqref{eq:lem:bound_near_shadow:3}, notice
from~\eqref{eq:lem:bound_near_shadow:5} that $\angle(W,Y) \geq \pi -
\angle(W,Z)$, so
\begin{align*}
  \angle(W,V)-\angle(W_z,V_z)
   &\leq \angle(W,V) - \angle(W_z,Y_z)
\\* & =   \angle(W,V) - \angle(W,Y)
\\* &\leq \pi - (\pi - \angle(W,Z)) = \angle(W,Z).\qedhere
\end{align*}
\end{proof}

\begin{lemma}\label{l:angular-dev-of-nablaF}
Fix a unit vector $U_z \in \TZX$ such that $\angle(U_z,Z_z) =
\alpha < \pi$.  Let $\wh\gamma(t)$ for $t \in [0,1]$ be the
constant-speed geodesic from $Z_z$ to~$U_z$ in~$\LL_Z(\Smu)$.
Then
$$
  \lim_{t\to 0}\frac{1}{\sin(t\alpha)}
    \Bigl(\nablabmu F_\mu\bigl(\LL^{-1}_Z(\wh\gamma(t))\bigr)
    -
    \nablabmu F_{\mu_Z}\bigl(\wh\gamma(t)\bigr)\!\Bigr)
  =
  0.
$$
\end{lemma}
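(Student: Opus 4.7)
The plan is to express both directional derivatives as integrals against $\hmu$ and then bound their integrand pointwise, using Lemma~\ref{l:bound-near-shadow} together with a geometric sharpening to second order in~$t\alpha$.  Set $V(t) = \LL^{-1}_Z(\wh\gamma(t))$, interpreted as the preimage lying on the geodesic in $\Smu$ from $Z$ to a fixed preimage of~$U_z$.  By Proposition~\ref{p:limit-log-iso-on-small-geo}, this geodesic is mapped isometrically by~$\LL_Z$, so $\angle(V(t),Z) = t\alpha$ and $V(t)_z = \wh\gamma(t)$.  Corollary~\ref{c:nablamu(F)}, extended to the apex of $\TZM$ via Proposition~\ref{p:gradient-squared-distance-function}, combined with the change of variables $\mu_Z = (\LL_Z)_\sharp \hmu$ and the length-preservation $\|W_z\| = \|W\|$, yields
\begin{equation*}
\nablabmu F_\mu(V(t)) - \nablabmu F_{\mu_Z}(\wh\gamma(t))
= \int_\Tmu \|W\|\bigl[\cos\angle(W_z,V(t)_z) - \cos\angle(W,V(t))\bigr]\hmu(dW).
\end{equation*}
The bracketed integrand is nonnegative by Proposition~\ref{p:limit-log-contracts} and vanishes outside $\II(V(t))$ by Lemma~\ref{l:bound-near-shadow}.

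Writing $\beta_1 = \angle(W_z,V(t)_z)$ and $\beta_2 = \angle(W,V(t))$, the mean value theorem gives $\cos\beta_1 - \cos\beta_2 = \sin(\xi)(\beta_2 - \beta_1)$ for some $\xi \in [\beta_1,\beta_2]$.  The factor $\beta_2 - \beta_1$ is at most $t\alpha$ by~\eqref{eq:lem:bound_near_shadow:3}.  The key step is to prove $\sin(\xi) \leq 2t\alpha$ on $\II(V(t))$.  Following the proof of Lemma~\ref{l:bound-near-shadow}, the geodesic in $\Smu$ from $W$ to $V(t)$ passes through a point $Y$ with $\angle(Y,Z) = \pi$; the reverse triangle inequality in $\Smu$ gives $\angle(Y,V(t)) \geq \pi - t\alpha$, and since $\angle(W,V(t)) \leq \pi$ equals $\angle(W,Y) + \angle(Y,V(t))$ along the geodesic, this forces $\angle(W,Y) \leq t\alpha$ and hence $\angle(W,Z) \geq \pi - t\alpha$.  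The triangle inequality in $\TZM$ combined with the equality cases of Proposition~\ref{p:limit-log-contracts} (yielding $\angle(W_z,Z_z) = \angle(W,Z)$ and $\angle(V(t)_z,Z_z) = t\alpha$) then gives $\beta_1 \geq (\pi - t\alpha) - t\alpha = \pi - 2t\alpha$.  So $\xi \geq \pi - 2t\alpha$ and $\sin(\xi) \leq \pi - \xi \leq 2t\alpha$.

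Combining these bounds produces
\begin{equation*}
0 \leq \nablabmu F_\mu(V(t)) - \nablabmu F_{\mu_Z}(\wh\gamma(t))
\leq 2(t\alpha)^2 \int_\Tmu \|W\|\,\hmu(dW),
\end{equation*}
and the integral is finite because localization forces $F_\mu(\bmu) < \infty$, so $\|W\|^2$ is $\hmu$-integrable and Cauchy--Schwarz (with $\hmu$ a probability measure) makes $\|W\|$ integrable too.  Dividing by $\sin(t\alpha)$ and using $\sin(t\alpha) \sim t\alpha$ as $t \to 0$ yields a bound of order $t\alpha$, which tends to~$0$ and proves the claim.

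The main obstacle is the quadratic-in-$t$ (rather than linear) bound on the integrand.  The naive estimate $\sin(\xi) \leq 1$ gives only $\cos\beta_1 - \cos\beta_2 \leq t\alpha$, which, after dividing by $\sin(t\alpha) \sim t\alpha$, would fail to vanish.  The needed improvement rests on the geometric observation that membership of $W$ in $\II(V(t))$ forces $\angle(W,Z) \geq \pi - t\alpha$, so that the equality cases of Proposition~\ref{p:limit-log-contracts} push both $\beta_1$ and $\beta_2$ within $2t\alpha$ of~$\pi$, rendering $\sin(\xi)$ small of order~$t\alpha$.  It is exactly this interplay between the angular contraction under $\LL_Z$ (Proposition~\ref{p:limit-log-contracts}) and the isometric behavior of $\LL_Z$ on geodesics starting from~$Z$ (Proposition~\ref{p:limit-log-iso-on-small-geo}) that delivers the decisive~$O(t^2)$ bound.
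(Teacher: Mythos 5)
Your proposal is correct and follows essentially the same route as the paper: both restrict the integral to the set where $\LL_Z$ strictly decreases the angle, then use the two conclusions of Lemma~\ref{l:bound-near-shadow} (angles differ by at most $t\alpha$ and both lie within $O(t\alpha)$ of~$\pi$) to get an $O(t^2)$ bound on the cosine difference. The only cosmetic difference is that you convert the angle bounds into a cosine bound via the mean value theorem with $\sin\xi \leq \pi-\xi$, whereas the paper uses the product-to-sum identity $\cos A - \cos B = -2\sin\frac{A-B}{2}\sin\frac{A+B}{2}$.
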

\begin{proof}
Set $W_z = \wh\gamma(t)$ and $W = \LL^{-1}(W_z)$.  Thanks to
Proposition~\ref{p:limit-log-iso-on-small-geo}, $W$ is well defined
since $\angle(W_z, Z_z) < \angle(U_z, Z_z) < \pi$.

The proof requires estimating the difference $\nablabmu F_\mu(W) -
\nablabmu F_{\mu_Z}(W_z)$.  Recall from
Proposition~\ref{p:limit-log-contracts} that $\angle(W,V) \geq
\angle(W_z,V_z)$ for all $V \in \Tmu$, so
\begin{equation}\label{eq:nabla-difference}
  \nablabmu F_\mu(W) - \nablabmu F_{\mu_Z}(W_z) \geq 0.
\end{equation}
Thus
\begin{align*}
  \lim_{t\to 0}\frac{1}{\sin(t\alpha)}
    \Bigl(\nablabmu F_\mu\bigl(\LL^{-1}_Z(\wh\gamma(t))\bigr)
    \!-\!
    \nablabmu F_{\mu_Z}\bigl(\wh\gamma(t)\bigr)\!\Bigr)
&= \lim_{t\to 0}\frac{1}{\sin(t\alpha)}
   \bigl(\nablabmu F_\mu(W) \!-\! \nablabmu F_{\mu_Z}(W_z)\bigr)
\\[.5ex]
&\geq 0.
\end{align*}

It remains to show that
$$
  \lim_{t\to 0}\frac{1}{\sin(t\alpha)}
  \bigl(\nablabmu F_\mu(W) \!-\! \nablabmu F_{\mu_Z}(W_z)\bigr)
  \leq
  0.
$$
As in Lemma~\ref{l:bound-near-shadow} but not restricted to unit
vectors, let
$$
  \JJ(W)
  =
  \bigl\{V \in \Tmu \mid \angle(V,W) > \angle(V_z,W_z)\bigr\}.
$$
Lemma~\ref{l:bound-near-shadow} concludes that $\angle(V,W) =
\angle(V_z,W_z)$ when $V \in \Tmu \setminus \JJ(W)$, while
\begin{align}\label{eq:lem:angular_dev-of_nablaF:3}
& \angle(W,V) - \angle(W_z,V_z)
  \leq
  \angle(W_z, Z_z)
  =
  t\alpha
\\\nonumber\text{and }
& \pi
  \geq
  \angle(W,V)
  \geq
  \angle(W_z,V_z)
  \geq
  \pi-t\alpha
\end{align}
when $V \!\in \JJ(W)$.  Expressing each term in the left side
of~\eqref{eq:nabla-difference} with
Corollary~\ref{c:nablamu(F)}~yields
\begin{align*}
\nablabmu F_{\mu_Z}(W_z)\,
  &= \,-\!\int_{\TZM}\<W_z,V_z\>\mu_Z(dV_z)
\\*
  &= \,-\!\int_{\Tmu}\<W_z,V_z\>\hmu(dV)
\\*
  &= \,-\!\int_{\Tmu\setminus \JJ(W)} \<W_z,V_z\> \hmu(dV)
       -  \int_{\JJ(W)}\<W_z,V_z\> \hmu(dV)
\\
  &= \,-\!\int_{\Tmu\setminus \JJ(W)} \<W,V\> \hmu(dV)
       -  \int_{\JJ(W)}\<W_z,V_z\> \hmu(dV)
\\[1.5ex]\makebox[0pt][r]{\text{and}}\quad\ \
\nablabmu F_\mu (W)\,
  &= \,-\!\int_{\Tmu} \<W,V\> \hmu(dV)
\\*
  &= \,-\!\int_{\Tmu \setminus \JJ(W)} \<W,V\> \hmu(dV)
       -  \int_{\JJ(W)}\<W,V\> \hmu(dV).
\end{align*}
The first integrals in these expressions for $\nablabmu
F_{\mu_Z}(W_z)$ and $\nablabmu F_\mu (W)$, namely the integrals over
$\Tmu \setminus \JJ(W)$, are equal.  Subtracting both sides
therefore produces
\begin{align*}
\nablabmu
  &F_\mu (W) - \nablabmu F_{\mu_Z}(W_z)
\\*
  &= \int_{\JJ(W)}\bigl(\<W_z,V_z\> - \<W,V\>\bigr) \hmu(dV)
\\
  &= \int_{\JJ(W)}\|V\|\Bigl(\cos\bigl(\angle(W_z,V_z)\bigr)
                       - \cos\bigl(\angle(W,V)\bigr)\!\Bigr) \hmu(dV)
\\
  &= \int_{\JJ(W)} - 2\|V\|\sin\Bigl(\frac{\angle(W_z,V_z) - \angle(W,V)}2\Bigr)
     \sin\Bigl(\frac{\angle(W_z,V_z) + \angle(W,V)}2\Bigr) \hmu(dV)
\\
  &\leq \int_{\JJ(W)}2\|V\|\sin(t\alpha/2)\sin(\pi -t\alpha) \hmu(dV)
   \qquad\text{by }\eqref{eq:lem:angular_dev-of_nablaF:3}
\\
  &\leq C \sin(t\alpha/2) \sin(t\alpha)
   \qquad\text{ for } C \geq \int_{\Tmu} \|V\| \hmu(dV) < \infty.
\end{align*}
Therefore\pagebreak[2]
\begin{align*}
  \lim_{t\to 0}\frac{1}{\sin(t\alpha)}
    \Bigl(\nablabmu F_\mu\bigl(\LL^{-1}_Z(\wh\gamma(t))\bigr)
    \!-\!
    \nablabmu F_{\mu_Z}\bigl(\wh\gamma(t)\bigr)\!\Bigr)
  &= \lim_{t\to 0} \frac{1}{\sin (t\alpha)}
     \bigl(\nablabmu F_\mu (W) - \nablabmu F_{\mu_Z}(W_z)\bigr)
\\*
  &\leq C \lim_{t\to 0} \frac{1}{\sin(t\alpha)} \sin(t\alpha/2) \sin(t\alpha)
\\*
  &\leq C \lim_{t\to 0} \sin(t\alpha/2) = 0.\qedhere
\end{align*}
\end{proof}

\begin{prop}\label{p:limit-log-preserve-mean}
Under Hypotheses~\ref{h:XX}, $\bmu_Z = \LL_Z (\bmu)$ is the Fr\'echet
mean of~$\mu_Z$ on~$\TZM$, where $\bmu$ is identified with the apexes
of both~$\Tmu$ and~$\TZM$.
\end{prop}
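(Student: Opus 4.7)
The plan is to reduce to showing the apex-derivative of $F_{\mu_Z}$ is nonnegative in every direction of $\TZM$, and then to unlock that inequality with a convexity argument anchored at two preselected directions. Since $\TZM$ is $\CAT(0)$ by Lemma~\ref{l:limit-log-preserve-npc} and $\mu_Z$ inherits finite second moment from $\hmu$ via the norm-preserving map $\LL_Z$, the Fr\'echet function $F_{\mu_Z}$ is convex on $\TZM$ and has a unique minimizer; identifying $\bmu_Z$ with the apex $\bmu$ of $\TZM$ via Remark~\ref{r:TvM-vs-TvX}, that minimizer equals $\bmu_Z$ precisely when $\nabla_\bmu F_{\mu_Z}(U_z)\geq 0$ for every $U_z\in\TZM$. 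Moreover, the flat-triangle argument driving Proposition~\ref{p:convexity-of_Df} uses only that the ambient space is $\CAT(0)$ and that $F_{\mu_Z}$ is a Fr\'echet function, never the Fr\'echet-mean hypothesis in a substantive way, so it transfers verbatim to $\mu_Z$ on $\TZM$ to give that $\nabla_\bmu F_{\mu_Z}$ is itself convex on $(\TZM,\dd_\bmu)$.

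Two explicit values serve as anchors. Trivially $\nabla_\bmu F_{\mu_Z}(\bmu)=0$. Along $Z_z$, the equality clause of Proposition~\ref{p:limit-log-contracts} gives $\angle(V,Z)=\angle(V_z,Z_z)$ for every $V\in\Tmu$, and combined with the norm-preservation of $\LL_Z$ this forces $\<Z_z,V_z\>_\bmu=\<Z,V\>_\bmu$. Corollary~\ref{c:nablamu(F)}, change of variables against $\hmu$ under $\mu_Z=(\LL_Z)_\sharp\hmu$, and Corollary~\ref{c:F-and-Ffwd-same-dd} then give
\[
  \nabla_\bmu F_{\mu_Z}(Z_z)
  \,=\, -\!\!\int_{\Tmu}\<Z,V\>_\bmu\,\hmu(dV)
  \,=\, \nabla_\bmu F(Z)
  \,=\, 0,
\]
where the last equality uses $Z\in\Emu$.

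The antipodal case $\angle(U_z,Z_z)=\pi$ now follows at once: by Lemma~\ref{l:flat-triangle} the conical-metric geodesic in $\TZM$ from $Z_z$ to $U_z$ has length $2$ and crosses $\bmu$ at its midpoint, so convexity of $\nabla_\bmu F_{\mu_Z}$ along this geodesic gives
\[
  0
  \,=\,\nabla_\bmu F_{\mu_Z}(\bmu)
  \,\leq\,\tfrac12\nabla_\bmu F_{\mu_Z}(Z_z)+\tfrac12\nabla_\bmu F_{\mu_Z}(U_z)
  \,=\,\tfrac12\nabla_\bmu F_{\mu_Z}(U_z).
\]
For the generic case $\angle(U_z,Z_z)=\alpha<\pi$, let $\sigma:[0,1]\to\TZM$ be the conical geodesic from $Z_z$ to $U_z$ in the flat triangle of Lemma~\ref{l:flat-triangle}. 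Convexity of $\nabla_\bmu F_{\mu_Z}$ along $\sigma$ gives the upper bound $\nabla_\bmu F_{\mu_Z}(\sigma(t))\leq t\,\nabla_\bmu F_{\mu_Z}(U_z)$. An elementary Euclidean computation inside the flat triangle identifies $\sigma(t)/\|\sigma(t)\|$ with a reparametrization $\wh\gamma(s(t))$ of the spherical geodesic used in Lemma~\ref{l:angular-dev-of-nablaF}, with $s(t)\sim(t\sin\alpha)/\alpha$ and $\|\sigma(t)\|\to 1$ as $t\to 0$. The quantitative estimate inside the proof of that lemma, together with $\nabla_\bmu F_\mu\geq 0$ (since $\bmu$ is the Fr\'echet mean of $\mu$), gives $\nabla_\bmu F_{\mu_Z}(\wh\gamma(s))\geq -C\sin(s\alpha/2)\sin(s\alpha)$ for small $s$; the $1$-homogeneity of $\nabla_\bmu F_{\mu_Z}$ then transfers this to $\nabla_\bmu F_{\mu_Z}(\sigma(t))\geq -O(t^2)$. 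Combining the two bounds, dividing by $t$, and letting $t\to 0$ forces $\nabla_\bmu F_{\mu_Z}(U_z)\geq 0$.

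The main obstacle I expect is the bookkeeping at this last step. The spherical parameter $s$ of Lemma~\ref{l:angular-dev-of-nablaF} and the conical parameter $t$ of $\sigma$ are genuinely different first-order quantities, and one must check the identification $s(t)=\Theta(t)$ precisely enough that the $-O(s^2)$ estimate on the spherical side survives as $-O(t^2)$ on the conical side, leaving room to divide by one power of $t$ and still pass safely to $t\to 0$. A secondary wrinkle is extending the conclusion to directions $U_z\in\TZM$ that lie outside the image $\LL_Z(\Smu)$ on which Lemma~\ref{l:angular-dev-of-nablaF} is tailored; continuity of $\nabla_\bmu F_{\mu_Z}$ together with the structural description of $\TZM$ as the limit in Proposition~\ref{p:limit-tangent-cone} should handle this.
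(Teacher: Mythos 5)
Your proof is correct and follows essentially the same route as the paper's: reduce to nonnegativity of $\nablabmu F_{\mu_Z}$ on $\TZM$, verify $\nablabmu F_{\mu_Z}(Z_z)=0$ via the equality case of Proposition~\ref{p:limit-log-contracts}, and play convexity of $\nablabmu F_{\mu_Z}$ along the conical geodesic against the $O(t^2)$ discrepancy estimate of Lemma~\ref{l:angular-dev-of-nablaF} on the corresponding spherical geodesic. The only differences are cosmetic — you argue directly rather than by contradiction, and you dispatch the $\angle(U_z,Z_z)=\pi$ case by convexity through the apex where the paper uses the angle triangle inequality — and the conical-versus-spherical reparametrization you flag as the main obstacle is exactly the elementary computation the paper carries out in~\eqref{eq:limit-log_preserve_mean:5}.
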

\begin{proof}
By Proposition~\ref{p:limit-log-contracts}
$\angle(Z,\hspace{-.5pt}V) = \angle(Z_z,\hspace{-.5pt}V_z)$ for all $V
\!\in \Tmu$, so Corollary~\ref{c:nablamu(F)}~yields
\begin{equation}\label{eq:limit-log_preserve_mean:1}
\begin{split}
  \nablabmu F_{\mu_Z}(Z_z)
  &= -\!\int_{\TZM} \<Z,V_z\> \mu_Z(dV_z)
\\
  &= -\!\int_{\Tmu} \<Z,V_z\> \hmu(dV)
\\
  &= -\!\int_{\Tmu} \<Z,V\> \hmu(dV)
\\
  &= \nablabmu F_\hmu(Z)
\\
  &= 0.
\end{split}
\end{equation}
Because $\TZM$ is a $\CAT(0)$ space by
Lemma~\ref{l:limit-log-preserve-npc}, it suffices to show that
\begin{equation}\label{eq:limit-log_preserve_mean:2}
  \nablabmu F_{\mu_Z}(V_z)\geq 0 \text{ for all } V_z \in \TZM.
\end{equation}
Assume, for contradiction, that
it does not hold, so there is some $V_z \in \TZM$ with
\begin{equation}\label{eq:limit-log_preserve_mean:2_1}
  \nablabmu F_{\mu_Z}(V_z) = -\delta < 0.
\end{equation}

The first consequence of this assumption is that $\angle(V_z,Z_z) <
\pi$.  Indeed,
\begin{align*}
\dis
\angle(V_z,Z_z) \geq \pi
   &\dis\ \implies
\begin{array}[t]{@{\ }l@{\ }l@{}}
\dis
\angle(V_z,W_z) + \angle(W_z,Z_z)
  &\dis\geq \angle(V_z,Z_z)
\\[1ex]
  &\dis\geq \pi \text{ for all } W_z \in \TZM
\end{array}
\\ &\dis\ \implies
\begin{array}[t]{@{\ }l@{\ }l@{}}
\dis
-\nablabmu F_{\mu_Z}(Z_z)
  &\dis  =  \int_{\TZM} \<Z_z,W_z\> \mu_Z(dW_z)
\\[3ex]\dis
  &\dis\leq -\!\int_{\TZM} \<V_z,W_z\> \mu_Z(dW_z)
\\[3ex]\dis
  &\dis  =  \nablabmu F_{\mu_Z}(V_z)
\end{array}
\\[1ex] &\dis\ \implies
\begin{array}[t]{@{\ }l@{\ }l@{}}
\dis
0
  &\dis  =  \nablabmu F_{\mu_Z}(Z_z)
\\[1ex]\dis
  &\dis\geq -\nablabmu F_{\mu_Z}(V_z)
\\[1ex]\dis
  &\dis  =  \delta > 0.
\end{array}
\end{align*}

As a result, $\angle(V,Z) = \angle(V_z,Z_z) < \pi$.  Thanks to the
angular metric in Lemma~\ref{l:angular-metric}, this implies there is
a geodesic in~$\LL_Z(\Smu)$ from $V_z$ to~$Z_z$.  Let $\gamma(t)$ and
$\wh\gamma(t)$ for $t \in [0,1]$ be geodesics from $Z_z$ to~$V_z$
in~$\TZM$ and in~$\LL_Z(\Smu)$, respectively.

\begin{figure*}[!ht]\label{f:gammas}
\centering
\includegraphics[width=0.35\textwidth]{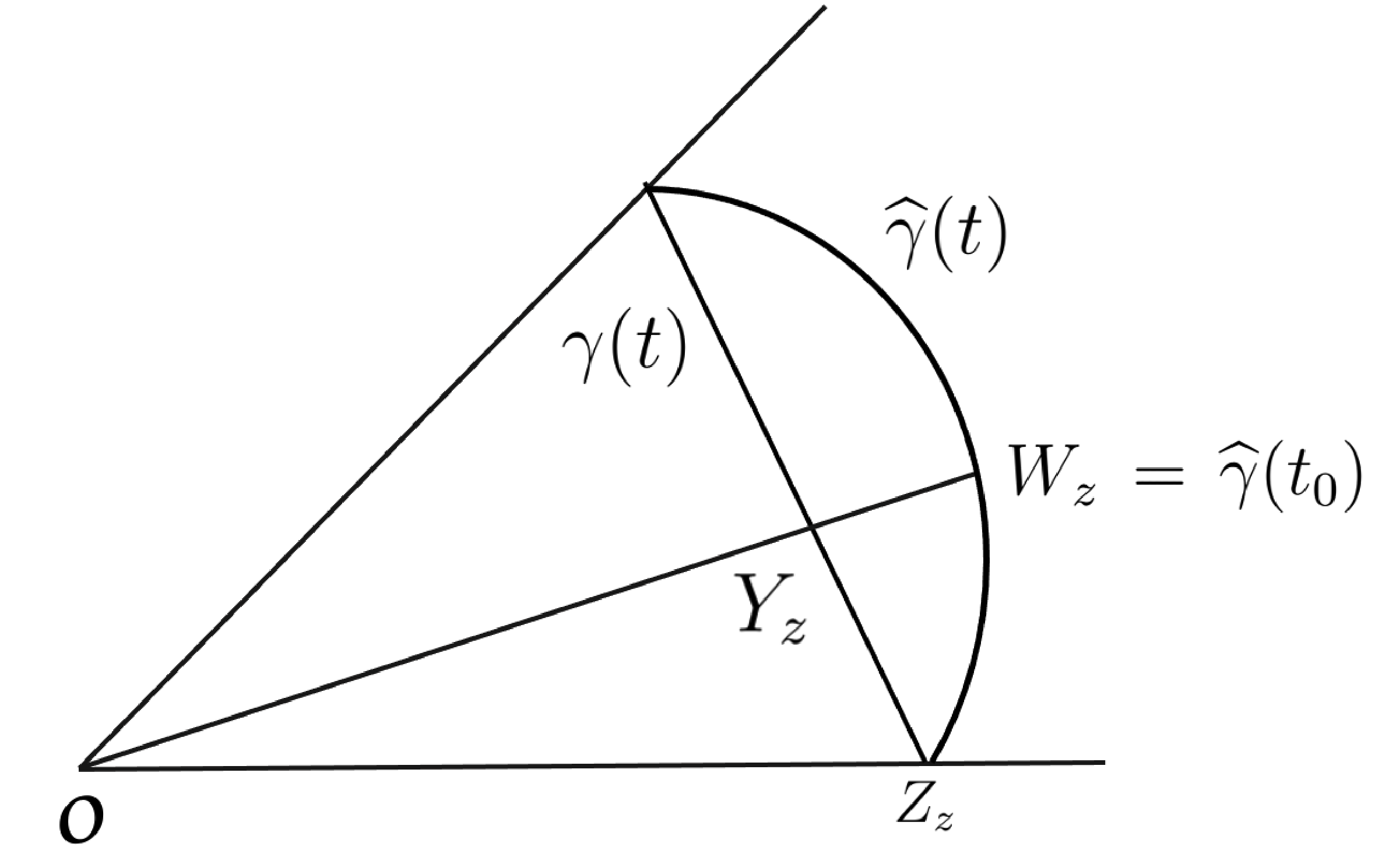}
\caption{}
\end{figure*}

Proposition~\ref{p:convexity-of_Df} applied to~$\mu_Z$ (this is where
the localized hypothesis enters) implies that $\nablabmu
F_{\mu_Z}$ is convex on~$\TZM$.  Thus, for all $t\in [0,1]$,
\begin{equation}\label{eq:limit-log_preserve_mean:3}
  \nablabmu F_{\mu_Z}\bigl(\gamma(t)\bigr)
  \leq
  t\nablabmu F_{\mu_Z}(V_z) + (1-t)\nablabmu F_{\mu_Z}(Z_z)
  =
  t(-\delta)
  <
  0.
\end{equation}
Due to Lemma~\ref{l:flat-triangle} the triangle $\triangle \bmu V_z
Z_z$ is flat, so the line segment from~$\bmu$ to~$\gamma(t)$ meets
$\wh\gamma$ at some point.  It follows that $\nablabmu
F_{\mu_Z}\bigl(\wh\gamma(t)\bigr) < 0$ for all $t \in [0,1]$.

Let $t_0 \in [0,1]$ and $W_z = \wh\gamma(t_0)$.  Denote by~$Y_z$ the
intersection of~$\gamma$ with the line segment~$\bmu W_z$.  Then
\begin{equation}\label{eq:limit-log_preserve_mean:4}
  \|Y_z\| \nablabmu F_{\mu_Z}(W_z) = \nablabmu F_{\mu_Z}(Y_z).
\end{equation}
Write $\angle(V_z,Z_z) = \alpha$ and $\angle(W_z,Z_z) = x$, so $t_0 =
\frac x\alpha$.  Elementary computations reveal
\begin{equation}\label{eq:limit-log_preserve_mean:5}
\begin{split}
  \frac{|Y_z Z_z|}{|V_z Z_z|}
  &= \frac{\sin x}{\sin x + \sin(\alpha-x)}
   = \frac{\sin(t_0\alpha)}{\sin(t_0\alpha) + \sin(\alpha(1-t_0))},
  \text{ and }
\\
  \|Y_z\|
  &= \frac{\sin \alpha}{\sin x + \sin(\alpha-x)}
   = \frac{\sin(\alpha)}{\sin(t_0\alpha) + \sin(\alpha(1-t_0))},
\end{split}
\end{equation}
where $|Y_z Z_z|$ is the geodesic distance between $Y_z$ and~$Z_z$.
Setting $a = \frac{|Y_z Z_z|}{|V_z Z_z|}$ and applying convexity of
$\nablabmu F_{\mu_Z}$ on the line segment $V_z Z_z$ gives
$$
  \nablabmu F_{\mu_Z}(Y_z)
  \leq
  \frac{|Y_z Z_z|}{|V_z Z_z|} \nablabmu F_{\mu_Z}(V_z)
  + \frac{|Y_zV_z|}{|V_z Z_z|}\nablabmu F_{\mu_Z}(Z_z)
  =
  -a\delta.
$$
Combining this with \eqref{eq:limit-log_preserve_mean:4} produces
\begin{align*}
  \delta
  & \leq -\frac{\|Y_z\|}{a}\nablabmu F_{\mu_Z}(W_z)
\\
  &   =  -\frac{\|Y_z\|}{a}
         \bigl(\nablabmu F_{\mu_Z}(W_z) - \nablabmu F_\mu(W)\bigr)
         -\frac{\|Y_z\|}{a}
         \nablabmu F_\mu (W)
\\
  & \leq -\frac{\|Y_z\|}{a}
         \bigl(\nablabmu F_{\mu_Z}(W_z)-\nablabmu F_\mu(W)\bigr)
    \quad\text{because}\quad
    \nablabmu F_\mu(V) > 0 \text{ for all } V \in \Tmu.
\end{align*}
Substituting into this the formulas for $\|Y_z\|$ and $a$ from
\eqref{eq:limit-log_preserve_mean:5} reveals
\begin{equation}\label{eq:limit-log_preserve_mean:8}
  \begin{split}
    \delta \leq -\frac{\sin \alpha}{\sin
    (t_0\alpha)}\left(\nablabmu F_{\mu_Z}(W_z)-\nablabmu F_\mu (W)
    \right).
  \end{split}
\end{equation}
It follows from Lemma~\ref{l:angular-dev-of-nablaF} that
\begin{align*}
  \lim_{t_0\to 0}\frac{1}{\sin(t_0\alpha)}
    &\bigl(\nablabmu F_\mu(W) - \nablabmu F_{\mu_Z}(W_z)\bigr)
\\* & = \lim_{t\to 0}\frac{1}{\sin(t\alpha)}
        \Bigl(\nablabmu F_\mu \bigl(\LL^{-1}_Z\bigl(\wh\gamma(t)\bigr)\bigr)
        - \nablabmu F_{\mu_Z}\bigl(\wh\gamma(t)\bigr)\Bigr)
\\* &=0.
\end{align*}
Applying this after letting $t_0$ converge to~$0$ in
\eqref{eq:limit-log_preserve_mean:8} gives
$ 
  \delta \leq 0,
$ 
which contradicts the assumption
\eqref{eq:limit-log_preserve_mean:2_1}.  Thus
\eqref{eq:limit-log_preserve_mean:2} holds and the proof is
complete.
\end{proof}

\subsubsection{Escape cone preservation}\label{ss:escape-preserv}

\begin{prop}\label{p:LL-preserves-Emu}
Under Hypotheses~\ref{h:XX}, the escape cone~$\Emuz$ of~$\mu_Z$
contains the limit log along~$Z$ of the escape cone of~$\mu$; that is,
$\LL_Z(\Emu) \subseteq \Emuz$.
\end{prop}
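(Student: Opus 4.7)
The plan is to sandwich $\nablabmu F_{\mu_Z}(V_z)$ between $0$ from below and $0$ from above, for any $V \in \Emu$ with image $V_z = \LL_Z(V)$.

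For the lower bound, we invoke Proposition~\ref{p:limit-log-preserve-mean}: since $Z \in \Emu$, the point $\bmu_Z$ is the Fr\'echet mean of~$\mu_Z$ on the $\CAT(0)$ space~$\TZM$. Because $F_{\mu_Z}$ is convex and minimized at~$\bmu_Z$, its directional derivative at~$\bmu_Z$ is nonnegative in every direction, so in particular $\nablabmu F_{\mu_Z}(V_z) \geq 0$.

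For the upper bound, I would apply Corollary~\ref{c:nablamu(F)} to each of $\mu$ and~$\mu_Z$, push forward $\mu_Z = (\LL_Z)_\sharp\hmu$, and use the contraction property of limit log. Writing $W_z = \LL_Z(W)$ and using that limit log preserves lengths (from Definition~\ref{d:limit-log}, $\LL_Z(tV) = tV_z$), Proposition~\ref{p:limit-log-contracts} yields $\angle(W_z,V_z) \leq \angle(W,V)$, hence
\[
  \<W_z,V_z\>
  = \|W\|\|V\|\cos\angle(W_z,V_z)
  \geq \|W\|\|V\|\cos\angle(W,V)
  = \<W,V\>
\]
for every $W \in \Tmu$. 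Integrating against~$\hmu$ and invoking Corollary~\ref{c:nablamu(F)} on both sides:
\[
  \nablabmu F_{\mu_Z}(V_z)
  = -\!\int_{\Tmu}\<W_z,V_z\>\,\hmu(dW)
  \leq -\!\int_{\Tmu}\<W,V\>\,\hmu(dW)
  = \nablabmu F(V)
  = 0,
\]
the final equality because $V \in \Emu$. Combining both bounds gives $\nablabmu F_{\mu_Z}(V_z) = 0$, so $V_z \in \Emuz$.

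I do not anticipate a serious obstacle here: the result is a direct consequence of the contraction estimate for limit log (Proposition~\ref{p:limit-log-contracts}) paired with the Fr\'echet mean preservation supplied by Proposition~\ref{p:limit-log-preserve-mean}. The only subtlety worth flagging is the length-preservation of~$\LL_Z$, which is immediate from the construction $tV \mapsto tV_z$ in Definition~\ref{d:limit-log} and is what allows the angular contraction to translate into an inequality on inner products after scaling by $\|W\|\|V\|$.
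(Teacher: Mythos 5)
Your proof is correct and is essentially the paper's own argument: both sandwich $\nabla_{\!\bmu_Z} F_{\mu_Z}(V_z)$ between $0$ from below (Fr\'echet-mean preservation, Proposition~\ref{p:limit-log-preserve-mean}) and $\nablabmu F(V) = 0$ from above (contraction, Proposition~\ref{p:limit-log-contracts}). Your write-up merely makes explicit, via the integral formula of Corollary~\ref{c:nablamu(F)}, the inner-product comparison that the paper's one-line appeal to contraction leaves implicit.
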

\begin{proof}
Since $\bmu_Z$ is the Fr\'echet mean of~$\mu_Z$ by
Proposition~\ref{p:limit-log-preserve-mean},
$$
  \Emuz
  =
  \bigl\{V\in T_{\bmu_Z}(\LL_Z \MM) \mid \nabla_{\!\bmu_Z}F_{\mu_Z}(V) = 0\bigr\},
$$
where $F_Z$ is the Fr\'echet function of $\mu_Z$.  In addition, from
Definition~\ref{d:limit-log} (of limit log map) and the fact that
$\bmu_Z$ is the Fr\'echet mean of $\mu_Z$, any $V \in \Emu$ satisfies
$$
  0
  \leq
  \nabla_{\!\bmu_Z} F_{\mu_Z}\bigl(\LL_Z(V)\bigr)
  \leq
  \nablabmu F(V)
  =
  0,
$$
because limit log maps contract
(Proposition~\ref{p:limit-log-contracts}).  Hence $\LL_Z(\Emu)
\subseteq \Emuz$.
\end{proof}

\begin{cor}\label{c:LL-preserves-Cmu}
Under Hypotheses~\ref{h:XX}, the fluctuating cone~$\Cmuz$ of~$\mu_Z$
contains the limit log along~$Z$ of the fluctuating cone of~$\mu$;
that is, $\LL_Z(\Cmu) \subseteq \Cmuz$.
\end{cor}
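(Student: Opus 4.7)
The plan is to decompose the fluctuating cone per Definition~\ref{d:fluctuating-cone} as $\Cmu = \Emu \cap \hull\mu$, handle each factor separately under $\LL_Z$, and exploit the trivial set-theoretic containment $\LL_Z(\Emu \cap \hull\mu) \subseteq \LL_Z(\Emu) \cap \LL_Z(\hull\mu)$. For the escape-cone factor, Proposition~\ref{p:LL-preserves-Emu} gives directly that $\LL_Z(\Emu) \subseteq \Emuz$. So the only real work is showing that the hull factor is also preserved, namely that $\LL_Z(\hull\mu) \subseteq \hull\mu_Z$.

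For the hull, Corollary~\ref{c:hull-preserved-by-limit-log} applied to $\cS = \supp(\hmu)$ yields
$$
  \LL_Z(\hull\mu)
  = \LL_Z(\hull\supp(\hmu))
  \subseteq \hull\LL_Z(\supp(\hmu)).
$$
It therefore suffices to check the pointwise containment $\LL_Z(\supp(\hmu)) \subseteq \supp(\mu_Z)$. This is a standard continuity fact about pushforwards: if $y = \LL_Z(x)$ with $x \in \supp(\hmu)$, then for any open neighborhood~$U$ of~$y$, continuity of the limit log map makes $\LL_Z^{-1}(U)$ an open neighborhood of~$x$, so $\mu_Z(U) = \hmu(\LL_Z^{-1}(U)) > 0$, whence $y \in \supp(\mu_Z)$. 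Taking hulls preserves this containment, giving $\hull\LL_Z(\supp(\hmu)) \subseteq \hull\supp(\mu_Z) = \hull\mu_Z$.

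Combining the two factors,
$$
  \LL_Z(\Cmu)
  \subseteq \LL_Z(\Emu) \cap \LL_Z(\hull\mu)
  \subseteq \Emuz \cap \hull\mu_Z
  = \Cmuz,
$$
which is the claim. There is no real obstacle here; this corollary is essentially a bookkeeping consequence of Proposition~\ref{p:LL-preserves-Emu} together with the hull-contraction principle of Corollary~\ref{c:hull-preserved-by-limit-log}, with the only small subtlety being the verification that $\LL_Z$ carries $\supp(\hmu)$ into $\supp(\mu_Z)$, which is immediate from continuity of~$\LL_Z$ (which is itself built into Proposition~\ref{p:limit-log-contracts}).
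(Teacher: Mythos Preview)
Your proof is correct and follows essentially the same approach as the paper: decompose $\Cmu = \Emu \cap \hull\mu$, invoke Proposition~\ref{p:LL-preserves-Emu} for the escape-cone factor, and invoke Corollary~\ref{c:hull-preserved-by-limit-log} with $\cS = \supp\hmu$ for the hull factor. The paper's proof is a one-liner that leaves implicit the verification $\LL_Z(\supp\hmu) \subseteq \supp\mu_Z$ that you spell out via continuity; your added detail is entirely appropriate.
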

\begin{proof}
Combine preservation of~$\Emu$ in Proposition~\ref{p:LL-preserves-Emu}
and preservation of convex cones in
Corollary~\ref{c:hull-preserved-by-limit-log} applied to $\cS =
\supp\hmu$, since $\Cmu = \Emu \cap \hull\mu$ by
Definition~\ref{d:fluctuating-cone}.
\end{proof}

\subsubsection{Fluctuating cone isometry}\label{ss:fluctuating-isometry}
\mbox{}\medskip

\noindent
The next result asserts an unexpected and decisive confinement: the
fluctuating cone~$\Cmu$ from Definition~\ref{d:fluctuating-cone} is
contained within a sector of total angle at most~$2\pi$.  Moreover,
this confinement is valid when viewed from the perspective of any
vector in the escape cone~$\Emu$ from Definition~\ref{d:escape-cone}.

\begin{prop}\label{p:fluctuating-cone_small}
Under Hypotheses~\ref{h:XX}, at most one unit vector $X \in \Cmu$
satisfies $\angle(Z,X) \geq \pi$.
\end{prop}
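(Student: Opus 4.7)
The plan is to assume, for contradiction, that there are two distinct unit vectors $X_1, X_2 \in \Cmu$ with $\angle(Z, X_1) = \angle(Z, X_2) = \pi$, and to derive a contradiction by showing that $X_1$ must lie in the set $A := \{V \in \Tmu : \<V, Z\> + \<V, X_2\> = 0\}$, which would force $\angle(X_1, X_2) = 0$, hence $X_1 = X_2$.

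First, I would apply Lemma~\ref{l:inj-of_limit-log-on_Cmu_1} to the pair $(Z, X_2)$, which lies in $\Emu$ and has $\angle(Z, X_2) = \pi$. This gives $\hmu$-a.s.\ $\angle(V, Z) + \angle(V, X_2) = \pi$, equivalently (using $\|Z\| = \|X_2\| = 1$ and $\cos(\pi - \theta) = -\cos\theta$) $\<V, Z\> + \<V, X_2\> = 0$ for $\hmu$-a.e.~$V$. By continuity of the inner product (Lemma~\ref{l:inner-product-is-continuous}), $A$ is closed, so $\supp\hmu \subseteq A$.

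The heart of the argument is the claim that $A$ is a geodesically convex subcone of $\Tmu$. It is clearly closed under positive scaling. For geodesic convexity, I would use the equivalent characterization $V \in A \iff \dd(V, Z)^2 + \dd(V, X_2)^2 = 2\|V\|^2 + 2$, which is the equality case of the $\CAT(0)$ median (parallelogram) inequality for the antipodal pair $(Z, X_2)$ with midpoint $\bmu$ (since $\dd(Z, X_2) = 2$). For $V_0, V_1 \in A$ and the geodesic $V(s)$ between them in $\Tmu$, Lemma~\ref{l:flat-triangle} yields the Euclidean median equality $\|V(s)\|^2 = (1-s)\|V_0\|^2 + s\|V_1\|^2 - s(1-s)\dd(V_0, V_1)^2$, while the $\CAT(0)$ median inequality for $(Z, X_2)$ applied at $V(s)$ bounds $\|V(s)\|^2 + 1$ from below by $\tfrac12(\dd(V(s), Z)^2 + \dd(V(s), X_2)^2)$. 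For the reverse bound, apply $\CAT(0)$ convexity of $\dd^2$ along $V(s)$ twice (once to $Z$, once to $X_2$), add the resulting inequalities, and plug in the median equality for $V_0, V_1$; combined with the flat-triangle expression above, this gives the matching upper bound $\tfrac12(\dd(V(s), Z)^2 + \dd(V(s), X_2)^2) \leq \|V(s)\|^2 + 1$, sandwiching the quantity to equality and placing $V(s) \in A$.

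Once $A$ is known to be a geodesically convex cone containing $\supp\hmu$, Definition~\ref{d:hull} gives $\hull\mu \subseteq A$; since $X_1 \in \Cmu \subseteq \hull\mu$, this places $X_1 \in A$, whence $-1 + \cos\angle(X_1, X_2) = \<X_1, Z\> + \<X_1, X_2\> = 0$, forcing $\angle(X_1, X_2) = 0$ and contradicting distinctness of $X_1, X_2$. The main obstacle is the geodesic convexity of $A$; the trick is to combine Lemma~\ref{l:flat-triangle} with both directions of $\CAT(0)$ inequalities (the median inequality supplying a lower bound, convexity of squared distance supplying a matching upper bound) to force equality along geodesics between points of $A$.
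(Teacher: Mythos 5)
Your proof is correct, and it follows the paper's overall strategy --- use Lemma~\ref{l:inj-of_limit-log-on_Cmu_1} to place $\supp\hmu$ inside a constraint set determined by one of the antipodes, prove that set is a geodesically convex subcone so that it traps $\hull\mu \supseteq \Cmu$, and then observe that membership of the other antipode in that set forces the two antipodes to coincide --- but the decisive convexity step is proved by a genuinely different method. The paper's Lemma~\ref{l:K-is-convex} treats the (equivalent, since $\cos(\pi-\theta) = -\cos\theta$ identifies your $A$ with $\KK = \{Y \mid \angle(Y,Z)+\angle(Y,X)=\pi\}$ away from the apex) set by building congruent comparison triangles on the round sphere inside the $\CAT(1)$ link $(\Smu,\dd_s)$, with perimeter bookkeeping and a pole-to-pole sliding construction that squeezes $|ZW|+|WX|$ between $\pi$ and $\pi$. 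You instead stay in the $\CAT(0)$ cone and sandwich $\dd_\bmu(V(s),Z)^2+\dd_\bmu(V(s),X_2)^2$ between $2\|V(s)\|^2+2$ from below (the median inequality at $\bmu$, which is the midpoint of $[Z,X_2]$ because $\angle(Z,X_2)=\pi$ gives $\dd_\bmu(Z,X_2)=2$) and from above (quadratic convexity of $\dd_\bmu(\cdot,Z)^2$ and $\dd_\bmu(\cdot,X_2)^2$ along $V(s)$, plus the Stewart identity for $\|V(s)\|^2$ supplied by Lemma~\ref{l:flat-triangle}). This buys a shorter, purely quadratic argument whose only external input is the standard Bruhat--Tits (CN) inequality in $\CAT(0)$ spaces --- a fact the paper never states explicitly but which is certainly available from the cited references --- whereas the paper's spherical argument is longer but self-contained in the link geometry it has already set up (Figure~\ref{f:jester-cap}). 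Two small points are worth making explicit in a write-up: $\angle(Z,X_i)\geq\pi$ forces $\angle(Z,X_i)=\pi$ because Alexandrov angles never exceed $\pi$, and $A$ contains the apex and is stable under nonnegative scaling, so it is a genuine subcone as Definition~\ref{d:hull} requires.
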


The proof appeals to a lemma.  Recall the angular metric~$\dd_s$ from
Lemma~\ref{l:angular-metric} on the unit tangent sphere~$S_\bmu$.

\begin{lemma}\label{l:K-is-convex}
Fix a point~$\bmu$ in a smoothly stratified metric space~$\MM$.  Let
$Z \in \Smu$ and assume $X$ is a unit tangent vector at~$\bmu$
satisfying $\angle(Z,X) = \pi$.  Then
$$
  \KK
  =
  \bigl\{Y \in \Tmu \mid
    \angle(Y,Z) + \angle(Y,X)
    =
    \pi
  \bigr\}
$$
is a convex subcone
of~$\Tmu$.  Equivalently, if
$$
  \Delta
  =
  \bigl\{V \in \Smu \mid
    \angle(V,Z) + \angle(V,X)
    =
    \pi
  \bigr\},
$$
so $\KK$ is the Euclidean cone over its set~$\Delta$ of unit vectors,
then for any $U,V\in \Delta$ with $\dd_s(U,V) < \pi$ the geodesic
in~$S_\bmu$ from $U$ to~$V$ lies in~$\Delta$.
\end{lemma}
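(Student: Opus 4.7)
The plan is to reduce the claim to geodesic convexity inside the unit tangent sphere~$\Smu$, which is $\CAT(1)$ since $\Tmu$ is $\CAT(0)$ by Lemma~\ref{l:limit-log-preserve-npc} (the link of a $\CAT(0)$ cone being $\CAT(1)$ is the standard cone construction). The defining condition of~$\KK$ is manifestly scale-invariant, so $\KK$ is automatically the Euclidean cone over~$\Delta$, and it suffices to establish the equivalent formulation: for $U, V \in \Delta$ with $\dd_s(U, V) < \pi$, the $\Smu$-geodesic from~$U$ to~$V$ lies entirely in~$\Delta$. If $\Delta \subseteq \{Z, X\}$ then $\KK$ consists of two opposite rays through the apex and is trivially convex in the $\CAT(0)$ cone~$\Tmu$, so assume $\Delta \setminus \{Z, X\} \neq \emptyset$.

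Next I would establish $\dd_s(Z, X) = \pi$. The inequality $\dd_s(Z, X) \geq \pi$ is a standard length-space consequence of $\angle(Z, X) = \pi$: any short path from~$Z$ to a direction whose angle with~$X$ is strictly less than~$\pi$ must already have length close to~$\pi$. The reverse inequality follows by picking any $U \in \Delta \setminus \{Z, X\}$, using Lemma~\ref{l:angular-metric} to convert the defining equality $\angle(Z, U) + \angle(U, X) = \pi$ into $\dd_s(Z, U) + \dd_s(U, X) = \pi$, and applying the $\Smu$-triangle inequality.

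The key step is a $\CAT(1)$ comparison using a carefully chosen configuration in~$S^2$. Set $c = \dd_s(U, V)$, $a = \dd_s(Z, U)$, $b = \dd_s(Z, V)$, so $\dd_s(X, U) = \pi - a$ and $\dd_s(X, V) = \pi - b$ by membership in~$\Delta$. Place $\bar U, \bar V \in S^2$ at distance~$c$ apart, let $\bar Z$ be at distances~$a$ from $\bar U$ and $b$ from $\bar V$, and set $\bar X = -\bar Z$; existence of the spherical triangle $\triangle\bar Z \bar U \bar V$ with sides $a, b, c$ follows from the bounds $c \leq a + b$ and $c \leq 2\pi - a - b$ that come from the $\Smu$-triangle inequalities applied to $\triangle ZUV$ and $\triangle XUV$, and antipodality of~$\bar X$ automatically gives it the correct distances~$\pi - a$ and $\pi - b$ from~$\bar U$ and~$\bar V$. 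The decisive property of this placement is that
$$
  d_{S^2}(\bar Y, \bar Z) + d_{S^2}(\bar Y, \bar X) = \pi \text{ for every } \bar Y \in S^2,
$$
since $\bar Z, \bar X$ are antipodal. For~$Y$ on the $\Smu$-geodesic from~$U$ to~$V$ and~$\bar Y$ the corresponding comparison point on the $S^2$-geodesic from~$\bar U$ to~$\bar V$, applying the $\CAT(1)$ inequality to $\triangle ZUV$ and to $\triangle XUV$ and summing produces $\dd_s(Y, Z) + \dd_s(Y, X) \leq \pi$, while the $\Smu$-triangle inequality gives the opposite bound $\geq \dd_s(Z, X) = \pi$. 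Equality forces both summands to lie below~$\pi$, so Lemma~\ref{l:angular-metric} replaces the distances by angles, yielding $Y \in \Delta$.

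The main obstacle I anticipate is the boundary situation in which one of the comparison triangles has perimeter exactly~$2\pi$, placing the $\CAT(1)$ comparison at the edge of its standard statement and degenerating the $S^2$-triangle to an arc of a great circle. In that event equality in the $\Smu$-triangle inequality forces the geodesic from~$U$ to~$V$ to pass through~$Z$ (if $c = a + b$) or through~$X$ (if $c = 2\pi - a - b$); then every intermediate~$Y$ lies on a length-$\pi$ $\Smu$-geodesic from~$Z$ to~$X$ assembled from sub-arcs of the known paths through~$U$ or~$V$, placing~$Y$ in~$\Delta$ by direct inspection.
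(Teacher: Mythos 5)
Your proof is correct and follows essentially the same route as the paper's: build a spherical comparison configuration in which the models of $Z$ and $X$ are antipodal, apply the $\CAT(1)$ comparison to the two triangles $\triangle ZUV$ and $\triangle XUV$ sharing the side $UV$, and sandwich $\dd_s(Y,Z)+\dd_s(Y,X)$ between $\pi$ and $\pi$. Your explicit treatment of the degenerate case where one comparison triangle has perimeter exactly $2\pi$ (forcing the geodesic through $Z$ or $X$) is a welcome bit of extra care that the paper glosses over.
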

\begin{proof}
Because $\angle(Z,X) = \pi$, any shortest path
$\zeta\bigl([0,1]\bigr)$ in~$S_\bmu$ from~$Z$ to a vector $V \in
\Delta$ is the initial segment of a shortest path
$\sigma\bigl([0,1]\bigr)$ from $Z$ to~$X$; that is,
$$
  \zeta\bigl([0,1]\bigr)
  \subseteq
  \sigma\bigl([0,1]\bigr)
  \text{ and } \zeta(0) = \sigma(0) = Z.
$$
Let $\xi\bigl([0,1]\bigr)$ be a shortest path in~$S_\bmu$ from $Z$ to
another vector $U \in \Delta$ with $\dd_s(U,V) <\nolinebreak \pi$.

The aim is to show that the shortest path $\gamma\bigl([0,1]\bigr)$
in~$S_\bmu$ from $U$ to~$V$ lies in~$\Delta$.  (See
Figure~\ref{f:jester-cap}, which also depicts what would happen if
there were two points $\{X_0,X_1\}$ instead of just a single~$X$, as
this case is relevant in Section~\ref{b:mu-lim-log}.)  In particular,
for any $W \in \gamma$, the goal is to show that
\begin{equation}\label{eq:lem:fluctuating-cone_small:2}
  \dd_s(Z,W) + \dd_s(W,X)
  =
  \pi.
\end{equation}
\begin{figure*}[!ht]
\centering
\includegraphics[width=0.35\textwidth]{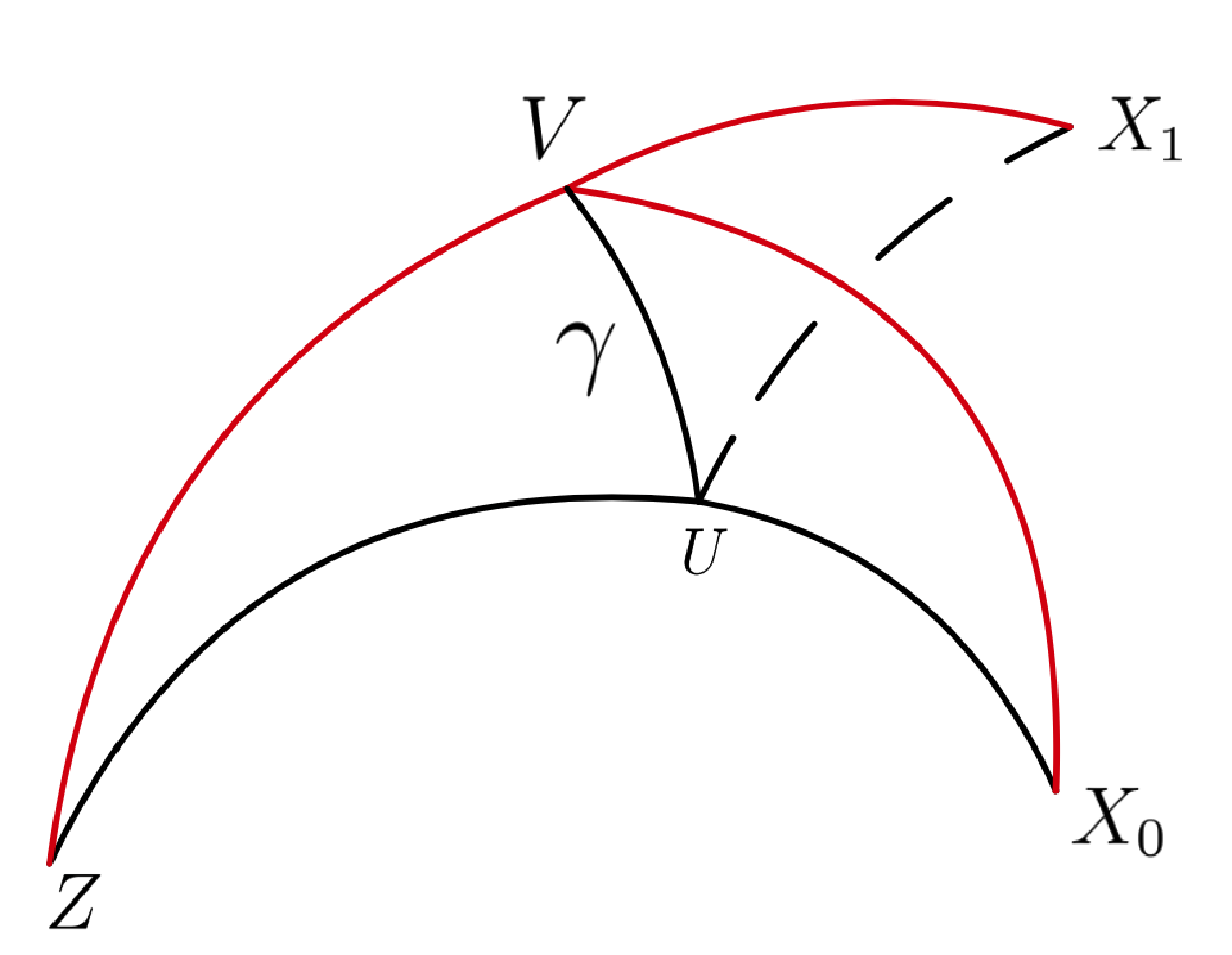}
\caption{Geometry of Lemma~\ref{l:K-is-convex}, if there are two
distinct points~$X_0$ and~$X_1$ instead of just one~$X$.}
\label{f:jester-cap}
\end{figure*}

Consider the two triangles $\triangle ZUV$ and $\triangle UV^{\!}X$
in $S_\bmu$.  Since $\dd_s(U,V) < \pi$,
\begin{align*}
  \abs{ZU} + \abs{UV} + \abs{ZV} + \abs{UV} + \abs{UX} + \abs{V^{\!}X}
   &= \abs{ZU} + \abs{UX} + \abs{ZV} + \abs{V^{\!}X} + 2\abs{UV}
\\ &= \abs{ZX} + \abs{ZX} + 2\abs{UV}
\\ &< 4\pi.
\end{align*}
So the perimeter of either $\triangle ZUV$ or $\triangle UV^{\!}X$
(or both) is less than~$2\pi$.  Supposing that the perimeter of
$\triangle ZUV$ is less than~$2\pi$, proceed to construct a congruent
triangle $\triangle Z'U'V'$ on the unit sphere.  The other case---when
the perimeter of $\triangle UV^{\!}X$ is less than~$2\pi$---works
similarly.

Construct a congruent model on the Euclidean unit sphere of
dimension~$2$ with $Z'$ at the north pole and $X'$ at the south pole,
where notationally a prime denotes passage to the congruent Euclidean
spherical model.  Let $\sigma_0'$ and $\tau_0'$ be any two half great
circles from pole to pole.  Pick a point~$V'$ on~$\sigma'_0$
satisfying $|V'Z'| = |VZ|$ and a point $U'$ on~$\tau_0'$ satisfying
$|U'Z'| = |UZ|$.  Now slide $\sigma_0'$ closer to or further
from~$\tau_0'$ so that the shortest path $\gamma'$ from~$U'$ to~$V'$
has length $|U'V'| = |UV|$, which is possible because $|UV| < \pi$.
Then $\triangle U'V'Z'$ and $\triangle X'U'V'$ are two triangles in a
space of constant curvature~$1$ congruent to $\triangle UVZ$
and~$\triangle XUV$, respectively.  For any point $W \in \gamma$ pick
a corresponding point $W' \in \gamma'$.  As $(\Smu, d_s)$ is a
$\CAT(1)$ space (that much is true for any locally compact
$\CAT(\kappa)$ space by \cite[Corollary~1.25]{shadow-geom}),
$$
  |Z'W'| \geq |ZW|
  \text{, and }
  |X'W'| \geq |XW|,
$$
which implies
$$
  \pi
  =
  |Z'W'| + |X'W'|
  \geq
  |ZW| + |XW|
  \geq
  |ZX|
  =
  \pi.
$$
Thus equalities must occur throughout, and hence
\begin{equation}\label{eq:fluctuating-cone_small:6}
  |ZW| + |WX| = |ZX| = \pi.
\end{equation}

Therefore \eqref{eq:lem:fluctuating-cone_small:2} is proved, so $W \in
\Delta$ as desired.
\end{proof}

\begin{proof}[Proof of Proposition~\ref{p:fluctuating-cone_small}]
Suppose, contrary to the conclusion of the Proposition, that distinct
unit vectors $X_0,X_1 \in \Cmu$ have angles with~$Z$ both
$\geq \pi$.  The goal is to conclude that one of $X_0$ and~$X_1$ does
not lie in the convex cone $\hull\mu$ generated by the support
of~$\hmu = (\log_\bmu)_\sharp\mu$.  In fact the argument shows that
neither~$X_0$ nor~$X_1$ lies in~$\hull\mu$.

Lemma~\ref{l:inj-of_limit-log-on_Cmu_1} implies that $\angle(Z,X_0) =
\pi = \angle(Z,X_1)$ and that for $\mu$-almost all~$x \in \MM$,
$$
  \angle(\log_\bmu x,Z) + \angle(\log_\bmu x,X_0)
  =
  \pi
  =
  \angle(\log_\bmu x,Z) + \angle(\log_\bmu x,X_1).
$$
Lemma~\ref{l:K-is-convex}, applied to $X_i$ for $i \in \{0,1\}$,
yields convex sets~$\KK_i \subseteq \Tmu$ with
$$
  \KK_i
  =
  \bigl\{Y \in \Tmu \mid
    \angle(Y,Z) + \angle(Y,X_i)
    =
    \pi
  \bigr\}.
$$
Thus the convex cone $\hull\mu$ is contained in the convex set
$$
  \KK = \KK_0 \cap \KK_1.
$$
Now simply note that $X_0$ and~$X_1$ are not in~$\KK$ as $X_0 \not\in
\KK_1$ and $X_1 \not\in \KK_0$.  Thus neither $X_0$ nor~$X_1$ lies in
$\hull\mu$.
\end{proof}

\begin{cor}\label{c:inj-of-limit-log-on-Cmu}
Fix a localized measure $\mu$ on a $\CAT(\kappa)$ metric space.  For
any escape vector $Z \in \Emu$ the restriction $\LL_Z|_{\Cmu}: \Cmu
\to\nolinebreak \LL_Z(\Cmu)$ of the limit log map along~$Z$ to the
fluctuating cone is an isometry onto its image.
\end{cor}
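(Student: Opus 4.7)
The plan is to reduce the statement to a direct application of Theorem~\ref{t:isometry-limit-log}, whose hypothesis requires (i) a geodesically convex subcone of $\Tmu$ containing (ii) at most one ray that makes angle $\pi$ with~$Z$. Once those two conditions are verified for $\KK = \Cmu$, the theorem immediately supplies the isometry onto the image.

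For (i), I would note that $\Cmu = \Emu \cap \hull\mu$ by Definition~\ref{d:fluctuating-cone}. The escape cone $\Emu$ is geodesically convex in $(\Tmu, \dd_\bmu)$ by Proposition~\ref{p:escape-is-convex}, and $\hull\mu$ is geodesically convex by Definition~\ref{d:hull} (it is, essentially, defined to be so). An intersection of two geodesically convex subcones of the $\CAT(0)$ cone $\Tmu$ is again a geodesically convex subcone, so $\Cmu$ qualifies.

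For (ii), I would invoke Proposition~\ref{p:fluctuating-cone_small}, which is tailor-made for this purpose: at most one unit vector $X \in \Cmu$ satisfies $\angle(Z,X) \geq \pi$. Since angles between tangent vectors are bounded above by $\pi$ (Definition~\ref{d:angle}), this is equivalent to saying at most one ray in $\Cmu$ makes angle exactly $\pi$ with~$Z$.

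With (i) and (ii) in hand, Theorem~\ref{t:isometry-limit-log} applied to $\KK = \Cmu$ yields that $\LL_Z|_{\Cmu} : \Cmu \to \LL_Z(\Cmu)$ is an isometry onto its image, which is the claim. The substantive content, of course, is not in this short derivation but rather in the two inputs that have already been established: convexity of the escape cone and the sharp confinement result for the fluctuating cone. No step here should present any obstacle beyond checking that the intersection of geodesically convex subcones is geodesically convex, which is a standard consequence of the definition together with uniqueness of geodesics in the $\CAT(0)$ cone~$\Tmu$ (Lemma~\ref{l:limit-log-preserve-npc}).
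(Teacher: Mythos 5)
Your proposal is correct and matches the paper's proof exactly: the paper's entire argument is ``Let $\KK = \Cmu$ in Theorem~\ref{t:isometry-limit-log} via Proposition~\ref{p:fluctuating-cone_small},'' and your verification of the convexity hypothesis (via Proposition~\ref{p:escape-is-convex} and Definition~\ref{d:hull}) and of the angle-$\pi$ hypothesis (via Proposition~\ref{p:fluctuating-cone_small}) is precisely what that one-line proof leaves implicit.
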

\begin{proof}
Let $\KK = \Cmu$ in Theorem~\ref{t:isometry-limit-log} via
Proposition~\ref{p:fluctuating-cone_small}.
\end{proof}

\begin{remark}\label{r:miracle}
The isometry in Corollary~\ref{c:inj-of-limit-log-on-Cmu} is the
miracle that empowers tangential collapse (Section~\ref{s:collapse})
to relate CLTs in singular settings \cite{escape-vectors} to
recognizable linear CLTs.  This isometry is the reason to define
fluctuating cones~$\Cmu$ as distinct from escape cones~$\Emu$, because
Corollary~\ref{c:inj-of-limit-log-on-Cmu} fails for~$\Emu$, in
general: the Fr\'echet mean of a measure supported on the spine of an
open book \cite[Theorem~2.9]{hotz-et-al.2013} can be nudged onto any
desired page of the open book by adding a mass on that page, so the
escape cone is the entire open book, which collapses under limit log
along any individual page.
\end{remark}

\begin{remark}\label{r:inj-of-limit-log-on-Cmu}
For collapse by iterative d\'evissage in Section~\ref{s:collapse},
assuming the isometry in Corollary~\ref{c:inj-of-limit-log-on-Cmu}
would suffice instead of the stronger localized hypothesis on~$\mu$.
What the localized hypothesis adds is convexity of the Fr\'echet
function (Proposition~\ref{p:convexity-of_Df}).  That convexity is
used for estimates in subsequent work \cite{escape-vectors}; but
again, localized might be too strong a hypothesis for optimal
generality in that regard, as well.
\end{remark}

%

\section{Smoothly stratified metric spaces}\label{s:strat-spaces}

\begin{defn}[Smoothly stratified metric space]\label{d:stratified-space}
A complete, geodesic, locally compact $\CAT(\kappa)$ space $(\MM,\dd)$
is a \emph{smoothly stratified metric space} if it decomposes
$$
  \MM = \bigsqcup_{j=0}^d \MM^j
$$
into disjoint locally closed \emph{strata}~$\MM^j$ so that for
each~$j$, the stratum~$\MM^j$ has closure
$$
  \overline{\MM^j} = \bigcup_{k\leq j}\MM^k,
$$
and the following conditions hold.
\begin{enumerate}
\item\label{i:manifold-strata}%
(Manifold strata).  For each stratum $\MM^j$, the space
$(\MM^j,\dd|_{\MM^j})$ is a smooth manifold with geodesic distance
$\dd|_{\MM^j}$ that is the restriction of~$\dd$ to~$\MM^j$.

\item\label{i:exponentiable}%
(Local exponential maps).  For any point $p \in \MM$, there exists
$\ve >0$ such that the restriction of the logarithm map $\log_p$ to an
open ball $B(p,\ve) \subseteq \MM$ is a homeomorphism, so the
exponential map $\exp_p = \log_p^{-1}$ is locally well defined and a
homeomorphism.

\begin{excise}{%
  \item\label{i:equisingularity}%
  (Stratum equisingularity).  
  For any pair of points $p$ and~$q$ lying in the same stratum, there is
  a homeomorphism $\MM \to \MM$ that takes $p$ to~$q$.
}\end{excise}%
\end{enumerate}
\end{defn}

\begin{remark}\label{r:space-of_direction_complete}
Local compactness implies that the space of directions is compact
because it is homeomorphic to a compact sphere of small radius around
the apex by Definition~\ref{d:stratified-space}.\ref{i:exponentiable}.
\end{remark}

\begin{remark}\label{r:far-away}
For our purposes, the exponential map is only required to behave well
near the Fr\'echet mean.  Thus the main results of this paper continue
to hold if the sample space has bad points---where the singularities
get worse---outside a ball of positive radius around the Fr\'echet
mean~$\bmu$.
\end{remark}

\begin{remark}\label{r:local-triviality}
Axioms for stratified spaces often include hypotheses explicitly or
implicitly designed to force local triviality of tangent data within
each stratum.  That role is played here by local exponential maps in
Definition~\ref{d:stratified-space}.\ref{i:exponentiable}.  Local
triviality is not needed for the developments here or in the sequels,
\cite{random-tangent-fields} and~\cite{escape-vectors}, so a proof of
it (based on radial transport \cite[Definition~2.7
and Proposition~2.8]{shadow-geom})~is~omitted.
\end{remark}

\begin{remark}\label{r:not-riemannian}
Our definition of smoothly stratified metric space does not require a
Riemannian structure on each stratum.  That is because angles---and
thus inner products---on the tangent bundle are not required to vary
smoothly.  Indeed, angles on a smoothly stratified metric space can be
discontinuous (see \cite[Remark~1.22]{shadow-geom}).
Potential alternatives to the existence of local exponential maps in
Definition~\ref{d:stratified-space}.\ref{i:exponentiable} include
Pflaum's Riemannian stratified spaces \cite{pflaum2001} or iterated edge
metrics of Albin, Leichtnam, Mazzeo, and Piazza \cite{ALMP12}, but
issues arise with these.
\begin{enumerate}
\item%
In Pflaum's definition, a vector flow at a singular point $p$ cannot
leave the singular stratum containing~$p$, so control is relinquished
over the behavior of angles and geodesics near singular points.
\item%
Similarly, \cite{bertrand-ketterer-mondello-richard2018} note that
local behavior of geodesics near singular points are not well
understood when $\MM$ is equipped with an iterated edge metric.
\item%
It might be possible to replace
Definition~\ref{d:stratified-space}.\ref{i:manifold-strata} with a
requirement that the metric $\dd$ induces a Riemannian metric on each
stratum~$\MM^j$.  However, similar to Pflaum's Riemannian stratified
spaces~\cite{pflaum2001}, this does not guarantee control of geodesics near
singular points.
\end{enumerate}
Weakenings of the Riemannian framework on manifolds, such as connection
spaces \cite{pennec-lorenzi2020}, which still retain enough structure to
take derivatives
and exponentials could be useful in stratified settings, but for the
present purpose remain too strong, as limit log maps
(Definition~\ref{d:limit-log}) only require parallel transport in
radial directions (see \cite[Section~2]{shadow-geom}
for details about radial transport).
\end{remark}

\begin{figure*}[!ht]
\label{f:parabola_1}
\centering
\includegraphics[width=0.35\textwidth]{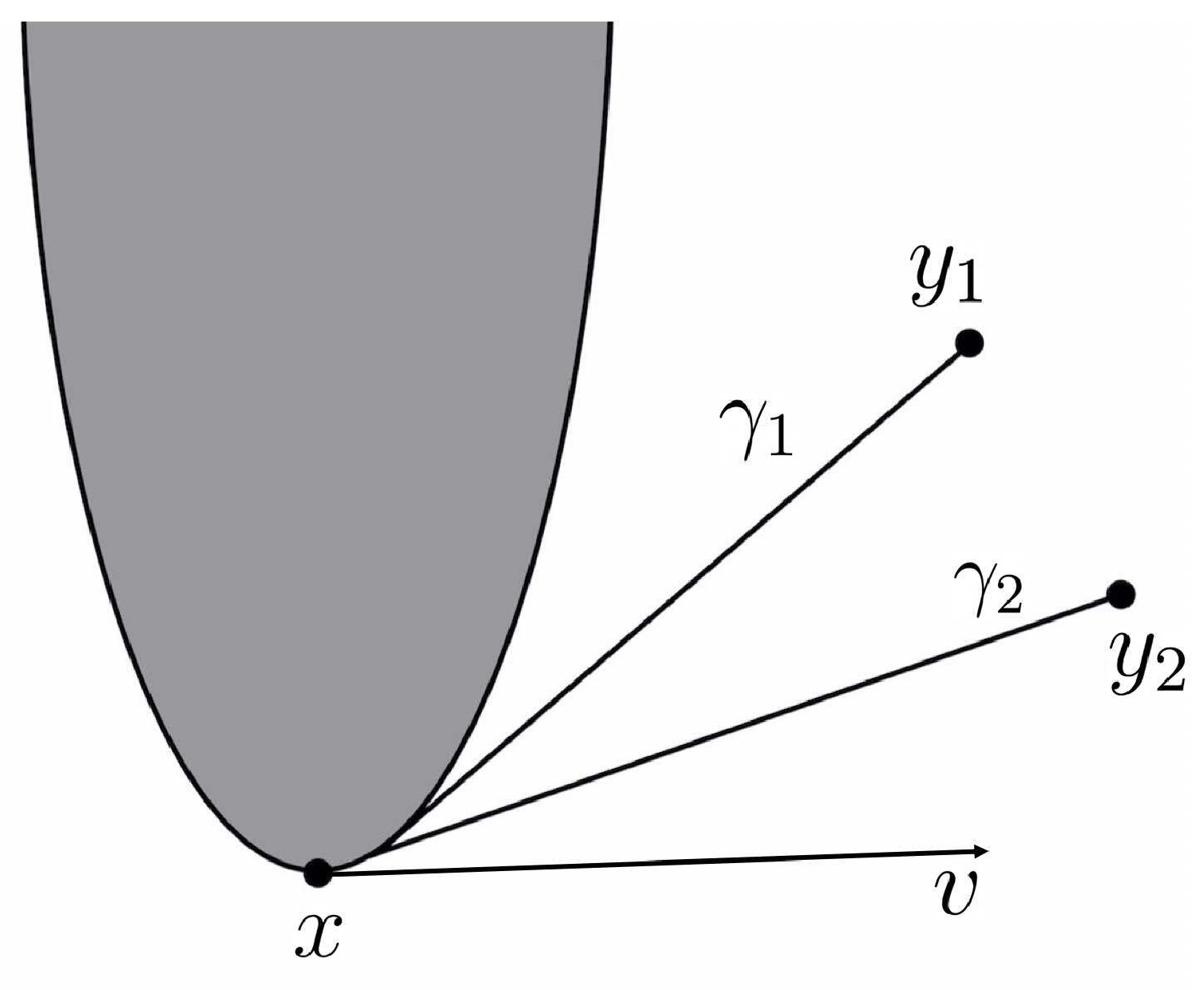}
\caption{$\exp_x$ is not well defined}
\end{figure*}

\begin{remark}\label{r:tubular}
Definition~\ref{d:stratified-space}.\ref{i:exponentiable} should be
compared with existence of maximal tubular neighborhoods on stratified
spaces via the exponential map on stratified spaces studied by Pflaum
\cite{pflaum2001}.  Example~\ref{e:not_mfd_with_corner} is a scenario
when the maximal tubular neighborhood of the parabola stratum $\MM^0$
exists while Definition~\ref{d:stratified-space}.\ref{i:exponentiable}
is not satisfied.
\end{remark}

\begin{example}\label{e:not_mfd_with_corner}
Let $\MM = \{(x,y) \in \RR^2 \mid y \leq x^2\}$
(Figure~\ref{f:parabola_1}) be the space with strata
\begin{itemize}
\item%
$\MM^0 = \{(x,y) \in \RR^2 \mid y = x^2\}$
\item%
$\MM^1 = \MM \setminus M_0$
\end{itemize}
whose metric on~$\MM^1$ is the induced flat metric from $\RR^2$ and
whose metric on~$\MM^2$ is the curve length metric on the parabola $y
= x^2$.  For a piecewise smooth curve $\gamma$ in~$\MM$, the length
of~$\gamma$ is the sum of the length of the intersection with~$\MM^0$
and~$\MM^1$,
$$
  |\gamma| = |\gamma \cup \MM^0| + |\gamma \cap \MM^1|.
$$
The metric on~$\MM$ is defined as
$$
  \dd(x,y) = \inf_{\begin{subarray}{c} \gamma:[0,1] \to \MM\ \\
  \gamma[0]=x,\ \gamma[1]=y \end{subarray}} |\gamma|.
$$
Then $(\MM,\dd)$ is a length space and also a Riemannian stratified
space in the sense of Pflaum \cite[Definition~2.4.1]{pflaum2001}.  But
it is not a smoothly stratified metric space by
Definition~\ref{d:stratified-space} since the exponential map at~$x$
is not locally defined.

It bears mentioning that this example reflects a geometric problem and
not a topological one: the fact that $\MM$ has nonempty boundary is
not the issue.  Indeed, the discussion would still work after taking
the union of~$\MM$ with a half-infinite cylinder with the parabola as
its base, the union being homeomorphic to the plane~$\RR^2$.
\end{example}

\begin{remark}\label{r:codim}
The tangent cone at a singular point~$p$ of a stratified space is
often thought of as a product of the vectors tangent to the stratum
containing~$p$ and a normal cone of vectors pointing out of the
stratum.  The normal cone is a cone over its unit vectors, which form
the \emph{link} of the singular point.  The topological type of the
link reflects the nature of the singularity type (see
\cite[p.\,7]{goresky-macpherson1988}, for example), with smooth points
having links that are spheres, mildly singular points having smooth
links, and deeper singularities having links that are one step less
singular.  In this metric setting, however, even a point whose link is
a sphere can be singular; that occurs, for example, in the kale
\cite{kale-2015}, which is homeomorphic to a Euclidean plane but not
isometric to it, having an isolated point of angle sum $> 2\pi$.
Since part of our theory collapses by inductively reducing the
``depth'' of a singularity at~$p$ via limits of tangent cones upon
approach to~$p$
(Section~\ref{s:collapse}), a proxy is needed to bound the ``depth''
of the singularity to guarantee that this
procedure terminates.
For that purpose, the codimension of the stratum containing~$p$
suffices, as follows, since the codimension increases by at least~$1$
with each iterated cone over a link.  Definition~\ref{d:codim} is used
only in Proposition~\ref{p:ppt-L-v}.
\end{remark}

\begin{defn}\label{d:codim}
A point $p$ in a smoothly stratified metric space~$\MM$ has
\emph{codimension}
$$
  \codim p = M - m,
$$
where $m$ is the dimension of the (unique smallest) stratum
containing~$p$ and $M$~is the maximal dimension of any stratum whose
closure contains~$p$.
\end{defn}

\begin{remark}\label{r:less-singular}
Points of lower codimension should be thought of as less singular than
points of higher codimension.  Although the set of singular points is
larger when the codimension is smaller, that set is just a stratum's
worth of copies of the same singularity.  The type of the singularity
is recorded more faithfully by the normal cone, whose dimension
decreases along with the codimension.
\end{remark}

This subsection concludes with crucial components of tangential
collapse in Section~\ref{s:collapse}, extensions of the geometry in
Section~\ref{s:prereqs} available in the smoothly stratified setting.
The first reduces from the setting of smoothly stratified metric
spaces, where curvature is bounded above by~$\kappa$, to nonpositively
curved tangent cones, where curvature is bounded above by~$0$.  It is
applied in the proof of Proposition~\ref{p:ppt-L-v}.

\begin{prop}\label{p:conical-metric}
The conical metric on~$\TpM$ makes the tangent cone of any point~$p$
in a smoothly stratified metric space~$\MM$ a smoothly stratified
nonpositively curved~space.
\end{prop}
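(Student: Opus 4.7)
The plan is to induce a stratification of $X = \TpM$ from that of $\MM$, verify each clause of Definition~\ref{d:stratified-space}, and appeal to Lemma~\ref{l:limit-log-preserve-npc} for nonpositive curvature. If $p \in \MM^k$, the local exponential map at~$p$ (Definition~\ref{d:stratified-space}.\ref{i:exponentiable}) yields a homeomorphism between a ball around the apex of~$X$ and a ball $B(p, \ve) \subseteq \MM$. Pulling back, each stratum $\MM^j$ with $p \in \overline{\MM^j}$ gives a locally closed subset of~$X$ that, in cone coordinates, takes the form $S^j_p \times (0,\infty)$ for some $S^j_p \subseteq \SpM$; together with $X^0 = \{\OO\}$, these are declared to be the strata of~$X$, producing a finite decomposition whose closure relations mirror those of $\MM$ near~$p$.

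For the manifold-stratum axiom, $\MM^j \cap B(p,\ve)$ is an open subset of the smooth manifold $\MM^j$, and the exponential-map homeomorphism equips $X^j$ near the apex with a corresponding smooth structure that extends radially over all of $X^j \cong S^j_p \times (0,\infty)$ by the cone structure of~$X$. The restriction of the conical metric $\dd_p$ to $X^j$ coincides with the intrinsic geodesic metric on $X^j$ because any geodesic triangle in~$X$ with one vertex at the apex is Euclidean-flat (Lemma~\ref{l:flat-triangle}), so short geodesics between $X^j$-points correspond under $\exp_p$ to short geodesics in $\MM^j$.

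The local exponentiability axiom is tautological at the apex, since $X$ is a cone with apex~$\OO$ and $\log_\OO$ is the canonical identification of Remark~\ref{r:TvM-vs-TvX}. At a non-apex point $V \in X^j$, the argument proceeds by induction on $\codim p$ (Definition~\ref{d:codim}): a neighborhood of~$V$ in~$X$ decomposes as a product of an open set in the manifold stratum $X^j$ with a normal cone that is itself the tangent cone at a point of $\MM$ whose codimension is strictly smaller than that of~$p$, so the inductive hypothesis delivers local exponentiability of the normal factor while the manifold factor contributes it trivially. Completeness, the geodesic property, and local compactness of~$X$ follow from its cone structure over the compact space~$\SpM$ (Remark~\ref{r:space-of_direction_complete}), and Lemma~\ref{l:limit-log-preserve-npc} gives CAT$(0)$.

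The main obstacle is the inductive step for local exponentiability at non-apex points. It requires a precise product decomposition of a neighborhood of $V \in X$ into its manifold-stratum factor and a normal-cone factor, plus verification that this normal cone actually arises as the tangent cone of a strictly less singular point of~$\MM$. That verification invokes the local homeomorphism from Definition~\ref{d:stratified-space}.\ref{i:exponentiable} applied at suitable nearby points together with the finite stratification of~$\MM$, which between them guarantee termination of the induction.
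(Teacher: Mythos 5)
Your proposal takes essentially the same route as the paper: transfer the stratification of $\MM$ to $\TpM$ via the local exponential homeomorphism of Definition~\ref{d:stratified-space}.\ref{i:exponentiable}, check that the conical metric restricts correctly to the induced strata (the paper cites Definitions~\ref{d:tangent-cone} and~\ref{d:conical-metric} where you cite Lemma~\ref{l:flat-triangle}), and invoke Lemma~\ref{l:limit-log-preserve-npc} for nonpositive curvature. The paper's own proof is in fact terser than yours---it never explicitly verifies local exponentiability at non-apex points of the cone---so the inductive step you flag as the main obstacle is no worse than what the published argument leaves implicit.
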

\begin{proof}
The space is nonpositively curved by
Lemma~\ref{l:limit-log-preserve-npc}.  The stratification decomposes
into disjoint strata appropriately for
Definition~\ref{d:stratified-space} because~$\MM$ itself does and
local exponentiation is a homeomorphism in
Definition~\ref{d:stratified-space}.\ref{i:exponentiable}.
The metric on $\TpM$ is induced (via angles in
Definition~\ref{d:angle} and Lemma~\ref{l:angular-metric}) by the
metric on~$\MM$ itself, which restricts appropriately to its strata by
Definition~\ref{d:stratified-space}.\ref{i:exponentiable}.  Therefore
the metric on~$\TpM$ restricts appropriately for
Definition~\ref{d:stratified-space}.\ref{i:manifold-strata} because of
how the conical metric on~$\TpM$ is constructed in
Definitions~\ref{d:tangent-cone} and~\ref{d:conical-metric}.
\end{proof}

The next result is applied in the last step of tangential collapse
(Proposition~\ref{p:ppt-LL-confined}).

\begin{lemma}\label{l:decrease-inner-product}
In the situation of Proposition~\ref{p:conical-metric}, geodesic
projection of~$\TpM$ onto the tangent vector space at~$p$ of the
stratum~$R$ containing~$p$, namely
\begin{align*}
   \bP: \TpM & \to T_p R
\\
             W &\mapsto \argmin_{Z\in T_p R} \dd_p(Z,W)
\end{align*}
is a contraction, in the sense that it
weakly decreases angles: for
$Z \!\in T_p R$ and\/~$V \!\in\nolinebreak \TpM$,
$$
  \<\bP(V),Z\>_p
  \geq
  \<V,Z\>_p.
$$
\end{lemma}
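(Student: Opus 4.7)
The plan is to exploit two structural facts: $\TpM$ is $\CAT(0)$ by Lemma~\ref{l:limit-log-preserve-npc}, and $T_pR$ sits inside $\TpM$ as a finite-dimensional Euclidean vector space—hence a complete, geodesically convex subcone—because $R$ is a smooth manifold stratum through~$p$ (Definition~\ref{d:stratified-space}.\ref{i:manifold-strata}). Granted these two ingredients, the standard projection lemma for complete convex subsets of $\CAT(0)$ spaces (e.g., \cite[Proposition~II.2.4]{bridson2013metric}) ensures that $\bP$ is well defined and $1$-Lipschitz: $\dd_p(\bP(V),\bP(W))\leq \dd_p(V,W)$ for all $V,W\in\TpM$.

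The second step is to apply this $1$-Lipschitz inequality with the test point $sZ$ for each $s\geq 0$. Since $T_pR$ is a vector space containing $Z$, each $sZ$ lies in $T_pR$, so $\bP(sZ)=sZ$, and squaring gives
\begin{equation*}
\dd_p\bigl(\bP(V),sZ\bigr)^{2} \leq \dd_p(V,sZ)^{2}.
\end{equation*}
Expanding both sides using the conical metric formula of Definition~\ref{d:conical-metric}, namely $\dd_p(U,sZ)^{2} = \|U\|^{2} + s^{2}\|Z\|^{2} - 2s\<U,Z\>_p$, and cancelling the common $s^{2}\|Z\|^{2}$ term yields
\begin{equation*}
2s\bigl(\<\bP(V),Z\>_p - \<V,Z\>_p\bigr) \;\geq\; \|\bP(V)\|^{2} - \|V\|^{2}
\end{equation*}
for every $s \geq 0$.

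The conclusion then follows by dividing by $s>0$ and letting $s\to\infty$: the right-hand side remains bounded (in fact nonpositive, since $\bP(p)=p$ and the $1$-Lipschitz bound forces $\|\bP(V)\|\leq\|V\|$), so the linear coefficient $\<\bP(V),Z\>_p - \<V,Z\>_p$ must be nonnegative. No serious obstacle is anticipated; the one subtle point that warrants care is verifying that $T_pR$ is geodesically convex in $(\TpM,\dd_p)$, which follows from the isometric embedding of $T_pR$ as a Euclidean vector subspace together with uniqueness of geodesics in $\CAT(0)$ spaces, so that the straight-line Euclidean geodesic in $T_pR$ between any two of its points is also the unique ambient geodesic, allowing the projection lemma of \cite{bridson2013metric} to apply.
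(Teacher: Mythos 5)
Your proof is correct, and it reaches the conclusion by a genuinely different route from the paper's. Both arguments ultimately rest on the convex-projection result \cite[Proposition~II.2.4]{bridson2013metric}, but on different clauses of it. The paper uses the obtuse-angle property at the foot of the projection: writing $W=\bP(V)$, the angle of $\triangle VWZ$ at~$W$ is at least $\pi/2$, giving $|VZ|^2\geq|VW|^2+|WZ|^2$; it then runs the law of cosines through three flat triangles having a vertex at the apex~$\OO$ (Lemma~\ref{l:flat-triangle}) to convert this directly into $\<V,Z\>\leq\<W,Z\>$. You instead invoke only the nonexpansiveness clause, test it against the moving point $sZ\in T_pR$ (fixed by~$\bP$), expand both sides with the conical-metric formula of Definition~\ref{d:conical-metric}---which is the same law-of-cosines identity---and let $s\to\infty$ so that the quadratic terms cancel and only the linear coefficient, the difference of inner products, survives. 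Your version needs no angle computation at~$W$ and none of the degenerate-case bookkeeping the paper does at the outset ($W=\OO$, or $V$ already tangent to~$R$); the price is a limiting argument where the paper's computation is finite and, as a byproduct, records the exact Pythagorean defect~$|VW|^2$. The one subtlety you flag---that $T_pR$ is a complete, geodesically convex subcone of the $\CAT(0)$ cone $\TpM$, so that $\bP$ is well defined and $1$-Lipschitz---is resolved correctly and is precisely the hypothesis the paper's proof also uses implicitly when it applies the projection proposition.
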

\begin{proof}
For readability, set $W = \bP(V)$, write $\OO$ for the apex of the
tangent cone $T_p R \subseteq \TpM$, and omit the subscripts on
pairings.  Assume $W \neq \OO$ and $V \notin T_\OO R$ is not already
tangent to~$R$, since otherwise $W = V$ and equality on the pairings
is trivial.  Note that $\angle(V,W) < \pi/2$ because otherwise
$\dd_\OO (V,\OO) \leq \dd_\OO(V,W)$, which violates the definition
of~$\bP(V)$.

In what follows, for $U,V \in T_\OO\MM$ write $UV$ to mean the
shortest path from $U$ to~$V$ and $|UV|$ to mean the length
$\dd_\OO(U,V)$ of this path.
\begin{figure*}[!ht]\label{f:triangle}
\centering
\includegraphics[scale =0.25]{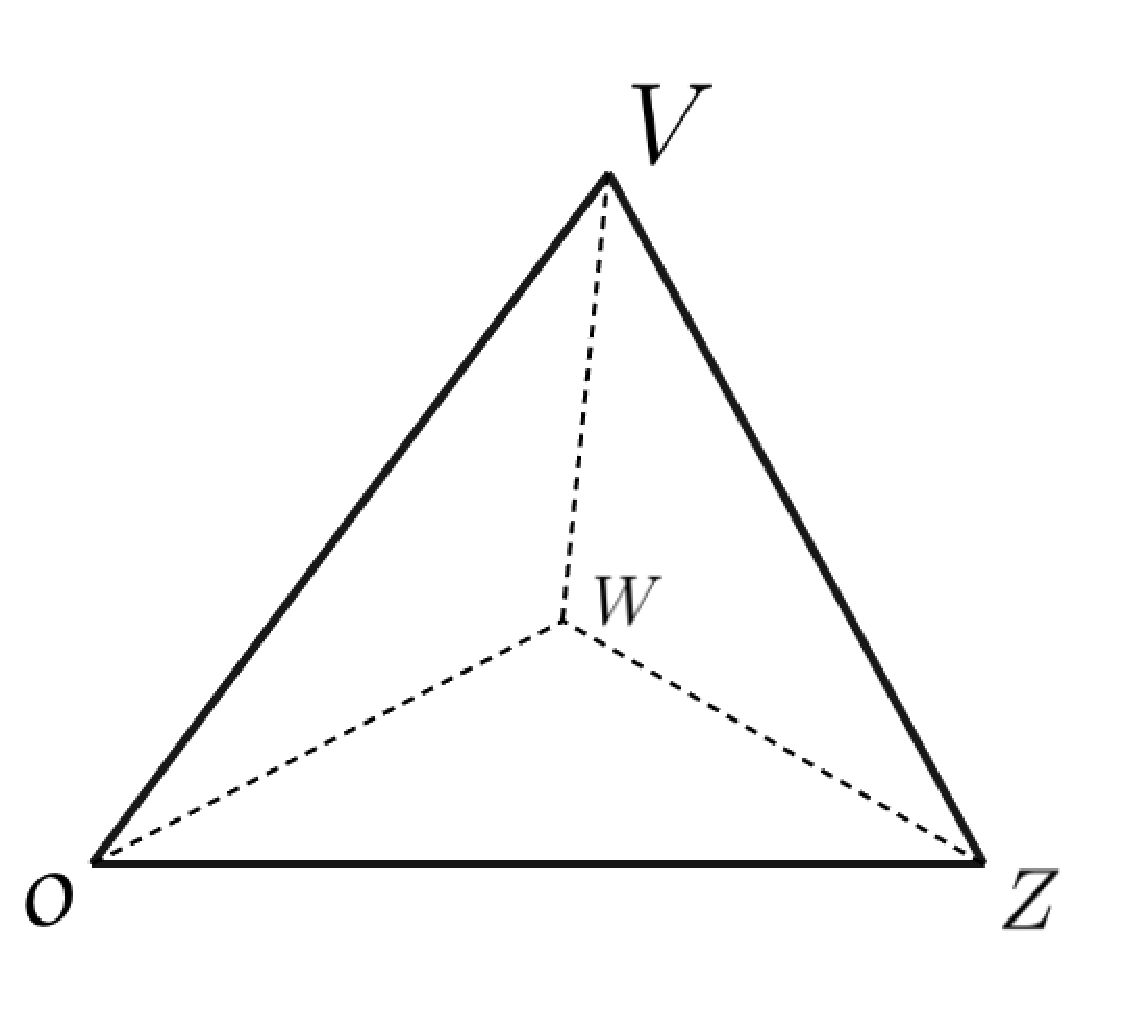}
\caption{}
\end{figure*}
The triangle $\triangle VWZ$ in the $\CAT(0)$ space $T_\OO\MM$ has
$\angle(WV,WZ)\geq \pi/2$ by \cite[Proposition~2.4]{bridson2013metric}
because $W$ is the projection of~$V$, so
\begin{equation}\label{eq:lem:P_K_reduce_inner_prod:1}
  |VZ|^2 \geq |VW|^2+|WZ|^2.
\end{equation}
Invoke Lemma~\ref{l:flat-triangle} to see that the triangle $\triangle
V\OO Z$ is flat.  Rescaling if necessary, assume that $V$ and~$Z$ are
unit vectors.  The law of cosines applied to them yields
\begin{equation}\label{eq:lem:P_K_reduce_inner_prod:2}
  |VZ|^2 = 2 - 2\<V,Z\>.
\end{equation}
Combing \eqref{eq:lem:P_K_reduce_inner_prod:1}
and~\eqref{eq:lem:P_K_reduce_inner_prod:2} produces
\begin{equation}\label{eq:lem:P_K_reduce_inner_prod:3}
  2\<V,Z\> \leq 2 - |VW|^2 - |WZ|^2.
\end{equation}
Now observe that the triangle $\triangle V\OO W$ is also flat (by
applying Lemma~\ref{l:flat-triangle} again) and $\angle(WV,W\OO) =
\pi/2$, so
\begin{equation}\label{eq:lem:P_K_reduce_inner_prod:4}
  1 = |\OO V|^2 = |\OO W|^2 + |VW|^2.
\end{equation}
{}From \eqref{eq:lem:P_K_reduce_inner_prod:3}
and~\eqref{eq:lem:P_K_reduce_inner_prod:4} we obtain
\begin{equation}\label{eq:lem:P_K_reduce_inner_prod:5}
  2\<V,Z\> \leq 1 + |\OO W|^2 - |WZ|^2
  =
  |\OO Z|^2 + |\OO W|^2 - |WZ|^2
  =
  2\<W,Z\>
\end{equation}
where the last equality comes from the law of cosines in the triangle
$\triangle \OO WZ$, which is again flat by Lemma~\ref{l:flat-triangle}.
Thus $\<V,Z\> \leq \<W,Z\>$, as desired.
\end{proof}

%

The final result in this section is used in the proof of
Proposition~\ref{p:ppt-LL-confined} on the way to existence of
tangential collapse in Theorem~\ref{t:collapse}.  It says that after
taking limit log, the escape cone always includes the stratum
containing the mean.

\begin{prop}\label{p:nicer-fluctuating-cone-after-limit-log}
Under Hypotheses~\ref{h:XX}, assume in addition that~$\MM$ is smoothly
stratified.  Suppose $\LL_Z(Z)$ lies in a stratum $\TZR \subseteq
\TZM$, where $R$ is the stratum of~$\MM$ toward which $Z$ points.
Then the
escape cone $\Emuz$ of~$\mu_Z$ contains the vector space~$\TZR$.
\end{prop}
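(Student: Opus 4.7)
Write $e = \LL_Z(Z)$. I would first show that both $e$ and $-e$ lie in the escape cone $\Emuz$. For $e$: since $Z \in \Emu$ by Hypotheses~\ref{h:XX}, the equality case of Proposition~\ref{p:limit-log-contracts} preserves inner products with $Z$ under $\LL_Z$, so a change of variables under $\mu_Z = (\LL_Z)_\sharp\hmu$ in Corollary~\ref{c:nablamu(F)} yields $\nabla_{\bmu_Z} F_{\mu_Z}(e) = \nabla_\bmu F_\mu(Z) = 0$, exactly as in~\eqref{eq:limit-log_preserve_mean:1} of the proof of Proposition~\ref{p:limit-log-preserve-mean}. For $-e$: Lemma~\ref{l:sum=pi}---applicable because $Z$ points toward the stratum $R$---gives $\angle(e,W) + \angle(W,-e) = \pi$ for every $W \in \TZM$, whence $\<W,-e\> = -\<W,e\>$; integrating against $\mu_Z$ then yields $\nabla_{\bmu_Z} F_{\mu_Z}(-e) = -\nabla_{\bmu_Z} F_{\mu_Z}(e) = 0$.

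To extend from $\pm e$ to all of $\TZR$, I would apply Lemma~\ref{l:inj-of_limit-log-on_Cmu_1} in $\TZM$ with the antipodal escape pair $e,-e$, confining $\supp\mu_Z$ to the convex subcone $\KK' = \{W \in \TZM : \angle(W,e) + \angle(W,-e) = \pi\}$ of Lemma~\ref{l:K-is-convex}. Because $\pm e$ span a geodesic line through the apex of the $\CAT(0)$ cone $\TZM$, the flat-strip splitting theorem decomposes $\KK'$ isometrically as $\RR \times N$ with the line factor along $\pm e$; the smoothly stratified hypothesis together with Proposition~\ref{p:conical-metric} places $\TZR$ in this splitting as $\RR \times R'$ for some Euclidean $R' \subseteq N$. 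The Fréchet function splits additively on $\KK'$ as $F_{\mu_Z}((y_1,y_2)) = F_{\mu_L}(y_1) + F_{\mu_N}(y_2)$, where $\mu_L, \mu_N$ are the marginals of $\mu_Z$ under this splitting. Preservation of the Fréchet mean, $\bmu_Z = (0,0)$ by Proposition~\ref{p:limit-log-preserve-mean}, then forces both marginal means to sit at the respective apexes, so for $Y = (y_1,y_2) \in \TZR$ the decomposition $\nabla_{\bmu_Z} F_{\mu_Z}(Y) = \nabla_0 F_{\mu_L}(y_1) + \nabla_{0_N} F_{\mu_N}(y_2)$ has first term vanishing by quadraticity of the Euclidean Fréchet function and second term reducing to the same statement one dimension lower applied to $(N,\mu_N,R')$. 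Inducting on $\dim \TZR$, with base case $\dim \TZR = 1$ supplied by the first paragraph, closes the proof.

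The main obstacle is the extension from $\pm e$ to all of $\TZR$: justifying the flat-strip decomposition $\KK' \cong \RR \times N$ compatibly with both $\TZR$ and $\supp\mu_Z$, and verifying that the recursive setting $(N,\mu_N,R')$ inherits the hypotheses needed to re-apply the argument. The subtle point in the recursion is the absence of an outer escape vector playing the role of $Z$; one sidesteps this by picking any unit vector in $R'$ and rerunning the first-paragraph argument directly, exploiting the product structure of the Fréchet function on $\KK'$ to supply the required antipodal-vanishing identity $\<W,-y\> = -\<W,y\>$ on $\supp\mu_Z$. Alternatively, one can invoke a single CAT(0) splitting $\KK' \cong \TZR \times N''$ along the entire Euclidean flat $\TZR$ at once, after which additive decomposition of the Fréchet function immediately delivers $\nabla_{\bmu_Z} F_{\mu_Z}|_{\TZR} \equiv 0$.
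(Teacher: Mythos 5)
Your first paragraph reproduces the paper's argument exactly: $\nabla_{\bmu_Z}F_{\mu_Z}(e)=0$ is \eqref{eq:limit-log_preserve_mean:1}, and $\nabla_{\bmu_Z}F_{\mu_Z}(-e)=-\nabla_{\bmu_Z}F_{\mu_Z}(e)=0$ follows from Lemma~\ref{l:sum=pi} together with Corollary~\ref{c:nablamu(F)}, just as you say. The divergence is in the passage from $\pm e$ to all of $\TZR$. The paper splits nothing: it combines $\nabla_{\bmu_Z}F_{\mu_Z}(\pm e)=0$ with convexity of the directional derivative (Proposition~\ref{p:convexity-of_Df} applied to the localized measure~$\mu_Z$) and its nonnegativity on the Euclidean space $\TZR$ (because $\bmu_Z$ minimizes~$F_{\mu_Z}$) to conclude that it vanishes on~$\TZR$.

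Your route for that step has two genuine gaps. First, the flat-strip/splitting theorem applies to the union of geodesic \emph{lines parallel to} $\RR e$, whereas your $\KK'$ is cut out by an angle condition at the apex; indeed $\KK'=\TZM$ by Lemma~\ref{l:sum=pi}, so the confinement via Lemma~\ref{l:inj-of_limit-log-on_Cmu_1} is vacuous here. Converting the angle condition into an isometric splitting $\RR\times N$ amounts to showing that the cone over a spherical suspension is a metric product; this is plausible but is not among the paper's lemmas and needs an argument (including care with non-unique geodesics of length~$\pi$ in the space of directions). Second, and more seriously, the recursion does not close. Rerunning your first paragraph for a unit vector $y\in R'$ requires (i) $\nabla_{\bmu_Z}F_{\mu_Z}(y)=0$, which is the statement being proved---for $e$ it came from $Z$ being an escape vector of the \emph{original} measure, and no upstream escape vector produces~$y$---and (ii) the identity $\angle(W,y)+\angle(W,-y)=\pi$ for all~$W$, which for~$e$ is Lemma~\ref{l:sum=pi} and rests specifically on $Z$ pointing toward the stratum~$R$; there is no analogue for other directions of~$\TZR$. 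Your proposed sidestep via ``the product structure of the Fr\'echet function'' is circular, since the product structure over the $y$-line is exactly what (ii) would deliver. Likewise the alternative single splitting $\TZM\cong\TZR\times N''$ is precisely the local triviality of the stratification that the paper explicitly declines to assume or prove (Remark~\ref{r:local-triviality}); invoking it presupposes something at least as strong as the conclusion.
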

\begin{proof}
The goal
is to show that, for all $V_Z \in \TZR$,
$$
  \nabla_{\!\bmu_Z} F_{\mu_Z}(V_Z) = 0.
$$
Recall from~\eqref{eq:limit-log_preserve_mean:1} that
$$
  \nablabmu F_{\mu_Z}(\LL_Z Z)
  =
  \nablabmu F_\hmu(Z)
  =
  0.
$$
Applying Lemma~\ref{l:sum=pi} and Corollary~\ref{c:nablamu(F)} yields
\begin{align*}
  -\nabla_{\!\bmu_Z}F_{\mu_Z}(\LL_Z Z)
  &= \int_{\TZM}\<\LL_Z Z,W_z\> \mu_Z(dW_z)
\\*[.5ex]
  &= -\int_{\TZM}\<-\LL_Z Z,W_z\> \mu_Z(dW_z)
\\*[.5ex]
  &= \nablabmu F_{\mu_Z}(-\LL_Z Z).
\end{align*}
Consequently,
$$
  \nabla_{\!\bmu_Z} F_{\mu_Z}(-\LL_Z Z)
  =
  \nabla_{\!\bmu_Z} F_{\mu_Z}(\LL_Z Z)
  =
  0.
$$
Combine this with the fact that $\bmu_Z$ minimizes the convex
function~$F_{\mu_Z}$, so the directional derivative
$\nabla_{\!\bmu_Z}F_{\mu_Z}$ is convex and nonnegative on the Euclidean
space $\TZR$,~to~get
\begin{equation*}
  \nabla_{\!\bmu_Z}F_{\mu_Z}(V_Z) = 0 \text{ for all } V_Z \in \TZR.
  \qedhere
\end{equation*}
\end{proof}

\section{Tangential collapse}\label{s:collapse}

\noindent
The goal is to embed the fluctuating cone~$\Cmu$ into a Euclidean
space by collapsing the local singularity at the Fr\'echet mean
(Theorem~\ref{t:collapse}).  This is done by a series of
\emph{d\'evissage steps}, each taking a limit logarithm along a
resolving direction.
Each d\'evissage (Definition~\ref{d:step-devissage}) pushes forward the
measure $\mu$ under the limit log along a resolving direction.  The
result is a measure on a stratified space whose singularity has lower
codimension (Definition~\ref{d:codim})
and is thus less singular.

Results throughout this section specify certain hypotheses, so it is
important to recall these, especially localization
(Definition~\ref{d:localized}) for measures on smoothly stratified
metric spaces (Definition~\ref{d:stratified-space}).  Note also that
the collapse by iterative d\'evissage heavily involves limit log maps
(Definition~\ref{d:limit-log}) along resolving directions
(Definition~\ref{d:resolving-direction}).

\subsection{D\'evissage}\label{b:dévissage}

\begin{prop}\label{p:dévissage-initial-step}
Fix a localized
measure~$\mu$ on a smoothly stratified metric space~$\MM$.  For any
unit vector $Z$ in the escape cone~$\Emu$
(Definition~\ref{d:escape-cone}) let~$\mu_Z = (\LL_Z)_\sharp\hmu$ be
the pushforward of $\hmu = (\log_\bmu)_\sharp\mu$ under limit log map
along~$Z$.  Then
\begin{enumerate}
\item\label{i:mean}%
$\bmu_Z = \LL_Z\bmu$ is the Fr\'echet mean of $\mu_Z$;
\item\label{i:LL_Z-isometry}%
$\LL_Z$ maps the fluctuating cone $\Cmu$ isometrically to its image,
so for all $V,W \in \Cmu$,
$$
  \angle\bigl(\LL_Z(V),\LL_Z(W)\bigr) = \angle(V,W);
$$
\item\label{i:Cmuz}%
the fluctuating cone~$\Cmuz$ of~$\mu_Z$ satisfies $\LL_Z(\Cmu)
\subseteq \Cmuz$;
\item\label{i:muZ-localized}%
the measure $\mu_Z$ is localized.
\end{enumerate}
\end{prop}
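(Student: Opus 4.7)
The four conclusions split into three that are essentially immediate restatements of results in Section~\ref{b:mu-lim-log} and the genuine new verification in part~(\ref{i:muZ-localized}). Specifically, part~(\ref{i:mean}) is exactly Proposition~\ref{p:limit-log-preserve-mean}. For part~(\ref{i:LL_Z-isometry}), Corollary~\ref{c:inj-of-limit-log-on-Cmu} asserts that $\LL_Z|_{\Cmu}$ is an isometry onto its image; combining this with the fact that $\LL_Z$ preserves cone-radial lengths by Definition~\ref{d:limit-log} translates the isometry statement into preservation of angles between vectors in $\Cmu$, via Definitions~\ref{d:inner-product} and~\ref{d:conical-metric}. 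Part~(\ref{i:Cmuz}) is exactly Corollary~\ref{c:LL-preserves-Cmu}.

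For part~(\ref{i:muZ-localized}), the key structural input is that $\TZM$ is a $\CAT(0)$ space by Lemma~\ref{l:limit-log-preserve-npc}. I would first check that $\mu_Z$ has finite second moment at the apex $\bmu_Z$: since $\LL_Z$ sends $tV$ to $tV_Z$ by Definition~\ref{d:limit-log}, it preserves norms, so a change of variables yields
$$
  \int_{\TZM}\|V_z\|^2\,\mu_Z(dV_z)
  = \int_{\Tmu}\|V\|^2\,\hmu(dV)
  = \int_{\MM}\dd^2(\bmu,x)\,\mu(dx)
  < \infty,
$$
with finiteness holding because $\mu$ is localized. With finite second moment in hand, Example~\ref{e:localized} applied in the $\CAT(0)$ setting supplies a unique Fréchet mean for $\mu_Z$, which by part~(\ref{i:mean}) must be $\bmu_Z$, together with convexity of $F_{\mu_Z}$. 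That handles punctuality. Retractability is automatic on a $\CAT(0)$ space because geodesics are unique, so $\log_{\bmu_Z}$ is defined everywhere on $\TZM$, not merely $\mu_Z$-almost surely.

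I do not anticipate major obstacles, since everything reduces to previously established facts. The only subtlety is propagating the finite-moment hypothesis under pushforward, which is immediate from the norm-preserving property of $\LL_Z$ and is exactly what allows Example~\ref{e:localized} to be invoked at a $\CAT(0)$ target without additional support constraints on $\mu_Z$.
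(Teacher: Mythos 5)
Your proposal is correct and follows essentially the same route as the paper: parts (1)--(3) are cited from Proposition~\ref{p:limit-log-preserve-mean}, Corollary~\ref{c:inj-of-limit-log-on-Cmu}, and Corollary~\ref{c:LL-preserves-Cmu} respectively, and part (4) invokes Lemma~\ref{l:limit-log-preserve-npc} to make $\TZM$ a $\CAT(0)$ space so that Example~\ref{e:localized} applies. Your added checks (finite second moment under the norm-preserving pushforward, and the translation of the conical-metric isometry into angle preservation) are worthwhile elaborations of steps the paper leaves implicit, not a different argument.
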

\begin{proof}
The first claim comes from Proposition~\ref{p:limit-log-preserve-mean}
(see also Lemma~\ref{l:mean-of-hmu}), and the second one comes from
Corollary~\ref{c:inj-of-limit-log-on-Cmu}, both of which use the
localized hypothesis.  The third is
Corollary~\ref{c:LL-preserves-Cmu}.  For the fourth, $\mu_Z$ is
automatically localized by Example~\ref{e:localized} because by
Lemma~\ref{l:limit-log-preserve-npc} the limit tangent cone $\TZM$
is~$\CAT(0)$.
\end{proof}

\begin{defn}\label{d:resolving-direction}
A~\emph{resolving direction} for a measure~$\mu$ on a smoothly
stratified~$\CAT(\kappa)$ metric space is a unit vector in the
fluctuating cone~$\Cmu$ pointing to a stratum of highest dimension
among strata having nonempty intersection with the exponential image
$$
  \{\exp_\bmu V \mid V \in \Cmu \text{ can be exponentiated}\}
$$
of the fluctuating cone as per
Definition~\ref{d:stratified-space}.\ref{i:exponentiable}.
\end{defn}

\begin{remark}\label{r:devissage}
Proposition~\ref{p:dévissage-initial-step} implies that applying the
limit log map along a resolving direction results in a stratified
space with the Fr\'echet mean~$\bmu$ and the fluctuating cone~$\Cmu$
preserved.  It turns out that~$\bmu$ and~$\Cmu$ is all of the
information needed to reproduce the limiting distribution for the CLT
in a less singular stratified space; see the Perturbative CLT in
\cite[Section~6.2]{escape-vectors}.
And it is the essence of d\'evissage in
Definitions~\ref{d:step-devissage} and~\ref{d:dévissage}: collapse by
iterating the limit log map in
Proposition~\ref{p:dévissage-initial-step}.  Conditions for this
iteration occupy Definition~\ref{d:ravel}.
\end{remark}

\begin{defn}\label{d:ravel}
A \emph{ravel}
is a triple $\{\tM, \nu, V\}$ where $\tM$ is a smoothly stratified
metric space, $\nu$~is a probability measure on~$\tM$, and $V$~is a
resolving direction of~$\nu$.
\end{defn}

\begin{defn}\label{d:step-devissage}
Given a ravel $\{\tM, \nu, V\}$, a \emph{d\'evissage step} takes a
limit log along the direction~$V$ to transfer the ravel $\{\tM, \nu,
V\}$ to a new one $\{\tM', \nu\hspace{.25ex}', V'\}$, where
\begin{enumerate}
\item%
$\tM' = \LL_{V}(\tM)$,
\item%
$\nu\hspace{.25ex}' = (\LL_V)_\sharp\nu$, and
\item%
$V'$ is a resolving direction of~$\nu\hspace{.25ex}'$.
\end{enumerate}
\end{defn}

\begin{defn}\label{d:resolved}
Fix a ravel $\{\tM, \nu, V\}$ whose Fr\'echet mean~$\bnu$ lies in a
stratum~$R$.  The ravel $\{\tM, \nu, V\}$ is \emph{resolved} if
$\fc_\nu \subseteq T_{\bar\nu} R$.  Otherwise, $\{\tM, \nu, V\}$ is
\emph{unresolved}.
\end{defn}

\begin{remark}\label{r:resolved}
When a ravel $\{\tM, \nu, V\}$ is unresolved, the local geometry can
be improved, in the sense that the singularity codimension can be
reduced by taking the limit log map along a resolving direction, as in
Proposition~\ref{p:dévissage-initial-step}.  When $\{\tM, \nu,
V\}$ is resolved, however, the limit log map is constrained to take
place within the same stratum as the Fr\'echet mean~$\bnu$, so the
limit tangent cone at~$\bnu$ has the same singularity as the tangent
cone at~$\bnu$ itself.  The process of d\'evissage here, made precise
in Definition~\ref{d:dévissage}, is a sequence of d\'evissage
steps (thought of as ``unravelings'') applied to unresolved ravels
until a resolved ravel is achieved.
\end{remark}

The starting ravel of interest is the tangent cone $\{\Tmu, \hmu,
Z\}$, where $\hmu = (\log_\bmu)_\sharp\mu$ is pushed forward
from~$\MM$ to the tangent cone under the log map, as usual.

\begin{defn}\label{d:dévissage}
Fix a measure~$\mu$ on a smoothly stratified metric space~$\MM$.  The
process of \emph{d\'evissage} is a sequence of d\'evissage steps,
beginning with an initial ravel $\{\MM_0, \nu_0, Z_0\} = \{\Tmu, \hmu,
Z\}$ for any resolving direction $Z$ of~$\mu$ and whose step~$i$
applies a d\'evissage step to the ravel $\{\MM_i, \nu_i, Z_i\}$ to
produce the ravel $\{\MM_{i+1}, \nu_{i+1}, Z_{i+1}\}$, where
\begin{enumerate}
\item%
$\MM_{i+1} = \vT_{Z_i}(\MM_i)$,
\item%
$\nu_{i+1} = (\LL_{Z_i})_\sharp\nu_i$, and
\item%
$Z_{i+1}$ is a resolving direction of~$\nu_{i+1}$.
\end{enumerate}
The d\'evissage \emph{terminates} at step~$k$ when the ravel
$\{\MM_{k+1}, \nu_{k+1}, Z_{k+1}\}$ is resolved
(Definition~\ref{d:resolved}).  To simplify notation in discussions of
d\'evissage, let
\begin{itemize}
\item%
$\LL_i = \LL_{Z_i}$ be the limit logarithm map along direction~$Z_i$,
\item%
$\fc_i = \Cni{i}$ be the fluctuating cone of $\nu_i$, and
\item%
$\dd_i = \dd_{\bnu_i}$ be the conical distance function on~$\MM_i$
(Definition~\ref{d:conical-metric}).
\end{itemize}
\pagebreak[2]
If termination occurs at step~$k$, then write
\begin{itemize}
\item%
$\MM_\infty = \MM_{k+1}$ for the \emph{terminal} smoothly stratified
metric space,
\item%
$\nu_\infty = \nu_{k+1}$ for the \emph{terminal} measure
on~$\MM_\infty$,
\item%
$\dd_\infty = \dd_{\bnui}$ for the \emph{terminal} distance
function on~$\MM_\infty$,
\item%
$R_\infty$ for the \emph{terminal} stratum of~$\MM_\infty$ that
contains~$\bnui$,
\item%
$\Ti = T_{\bnui}R_\infty \cong \RR^{\dim R_\infty}$ for the
\emph{terminal tangent vector space},
\item%
$\Ci = \Cni{\infty} \subseteq \Ti$ for the \emph{terminal} fluctuating
cone, and
\item%
$Z_\infty = Z_k \in \Ti$ for the \emph{terminal} resolving direction.
\end{itemize}
\end{defn}

\begin{remark}\label{r:sticky}
D\'evissage terminates when resolving directions fail to point out of
the current Fr\'echet mean stratum, so further limit logarithms along
resolving directions do not reduce the singularity codimension.  That
happens, for example, when the last d\'evissage moves to a top
stratum, i.e.,~with codimension~$0$.  But it is possible for
d\'evissage to stick to a stratum of positive codimension.  Such is
the case when $\MM$ is an open book \cite{hotz-et-al.2013}, where the
Fr\'echet mean is confined to the spine.
\end{remark}

\begin{lemma}\label{l:codim-decrease}
In any d\'evissage from Definition~\ref{d:dévissage}, $\LL_i(\bnu_i) =
\bnu_{i+1}$ and the singularity at~$\bnu_{i+1}$ has strictly lower
codimension than~at~$\bnu_i$.
\end{lemma}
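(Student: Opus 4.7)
The first claim $\LL_i(\bnu_i) = \bnu_{i+1}$ is an inductive application of Proposition~\ref{p:dévissage-initial-step}. The base case uses Example~\ref{e:localized} to certify that $\nu_0 = \hmu$ is localized on the $\CAT(0)$ space $\MM_0 = \Tmu$ with Fr\'echet mean $\bnu_0 = \bmu$ (Lemma~\ref{l:mean-of-hmu}). Inductively, $Z_i$ lies in the escape cone of $\nu_i$ because $Z_i \in \fc_i \subseteq E_{\nu_i}$ by Definition~\ref{d:fluctuating-cone}, so Proposition~\ref{p:dévissage-initial-step}(\ref{i:muZ-localized}) propagates localization to $\nu_{i+1}$ and Proposition~\ref{p:dévissage-initial-step}(\ref{i:mean}) gives $\bnu_{i+1} = \LL_i(\bnu_i)$.

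For the codimension decrease, let $R_0 \subseteq \MM_i$ denote the stratum containing $\bnu_i$ and $R \subseteq \MM_i$ the stratum toward which the resolving direction $Z_i$ points. Since the d\'evissage has not yet terminated at step~$i$, the ravel $\{\MM_i,\nu_i,Z_i\}$ is unresolved (Definition~\ref{d:resolved}), so $\fc_i \not\subseteq T_{\bnu_i}R_0$. Hence some direction in $\fc_i$ exits $R_0$ upon exponentiation, landing in a stratum $S \neq R_0$ with $R_0 \subseteq \bar S$, forcing $\dim S > \dim R_0$ because adjacent manifold strata jump in dimension. Definition~\ref{d:resolving-direction} selects $R$ of maximum such dimension, so $\dim R > \dim R_0$.

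Next I would stratify $\MM_{i+1} = \vT_{Z_i}\MM_i$ via Proposition~\ref{p:conical-metric} together with the limit tangent cone construction of Proposition~\ref{p:limit-tangent-cone}. For small $t > 0$, the point $q = \exp_{\bnu_i}(tZ_i)$ lies in~$R$, and $T_q\MM_i$ splits locally as $T_qR \times C$ for some normal cone $C$; its strata are in bijection with strata $S$ of $\MM_i$ satisfying $R \subseteq \bar S$, the stratum corresponding to $S$ having dimension $\dim S$. Radial transport carries this stratification of $T_q\MM_i$ to $\MM_{i+1}$, sending the apex $\bnu_{i+1}$ into the stratum corresponding to~$R$ itself, of dimension $\dim R$. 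Hence $m_{i+1} = \dim R > \dim R_0 = m_i$. Moreover, any stratum $S$ of $\MM_i$ with $R \subseteq \bar S$ also satisfies $R_0 \subseteq \bar R \subseteq \bar S$ by transitivity of the closure relation on strata, so the strata contributing to $M_{i+1}$ form a subset of those contributing to $M_i$, giving $M_{i+1} \leq M_i$. Combining,
\[
  \codim \bnu_{i+1} = M_{i+1} - m_{i+1} < M_i - m_i = \codim \bnu_i.
\]

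The main obstacle I expect is the explicit identification of strata of the limit tangent cone $\MM_{i+1}$ with tangent cones to strata of $\MM_i$ adjacent to~$R$. Proposition~\ref{p:conical-metric} supplies the stratification of ordinary tangent cones $T_q\MM_i$, so what is needed beyond is that radial transport in Proposition~\ref{p:limit-tangent-cone} respects this stratification and sends the apex into the stratum $T_qR$; this last fact, together with $q \in R$ for every small $t>0$ (so no stratum change occurs along~$\OO z$), is what forces $m_{i+1}$ to strictly exceed~$m_i$.
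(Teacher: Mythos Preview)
Your proof is correct and follows essentially the same approach as the paper's: invoke Proposition~\ref{p:dévissage-initial-step} for Fr\'echet mean preservation, then argue that the stratum dimension~$m$ strictly increases (because the resolving direction exits~$R_0$ into a strictly higher-dimensional stratum~$R$) while the maximal adjacent stratum dimension~$M$ does not increase (because $R_0 \subseteq \bar R \subseteq \bar S$ for any~$S$ adjacent to~$R$). The paper's own proof is terser---it does not spell out the induction for localization or the radial-transport identification of strata in~$\MM_{i+1}$---but the logic is the same, and your extra detail on these points, including the honest flag about identifying strata of the limit tangent cone, addresses exactly the places where the paper's argument is compressed.
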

\begin{proof}
The Fr\'echet mean equality is thanks to
Proposition~\ref{p:dévissage-initial-step}.  What remains is to
carry out the dimension calculation showing that d\'evissage
(Definition~\ref{d:step-devissage}) applied to an unresolved ravel $\{\tM,
\nu, V\}$ reduces the codimension of the singularity by at least~$1$
upon passage to $\{\tM', \nu\hspace{.25ex}', V'\}$.  Let~$R$ be the
stratum of~$\MM$ containing~$\bnu$.  By Definition~\ref{d:resolved} of
unresolved, the exponential of~$tV$ for positive $t \ll 1$ lies in a
stratum~$R'$ of~$\MM$ that does not equal~$R$ but whose closure
meets~$R$, as can be seen by letting~$t \to\nolinebreak 0$.  The
stratum of~$\MM'$ containing~$\bnu\hspace{.25ex}'$ contains
$T_{\bnu\hspace{.25ex}'} R'$ and hence has strictly larger dimension
than~$R$ does.  Similarly, any stratum whose closure meets~$R'$ also
contains~$R$ in its closure; taking tangent cones implies that the
maximal dimension of a stratum containing~$\bnu\hspace{.25ex}'$ is at
most the maximal dimension of a stratum whose~closure~contains~$R$.
\end{proof}

\begin{remark}\label{r:infty}
The notations $\MM_\infty$ and $\bnui$ and so on for the terminal
objects are justified because any attempt at d\'evissage beyond
termination would have no effect: the relevant limit log map would be
the identity, or at least a canonical isometry.
\end{remark}

The purpose of d\'evissage is to map the singular situation
surrounding the fluctuating cone of~$\mu$ in~$\MM$ to a smooth
version.  The heavy lifting is carried by composing the d\'evissage
steps.

\begin{defn}\label{d:composite-devissage}
The \emph{composite d\'evissage} is
$$
  \LL_\circ
  =
  \LL_k \circ \dots \circ \LL_0:
  \Tmu
  \to
  \vT_{Z_k}\MM_k = \MM_\infty.
$$
\end{defn}

Alas, although the target of~$\LL_\circ$ is smooth when the last
d\'evissage step moves to a top stratum, it need not be smooth in
general.  Therefore $\LL_\circ$ in general only produces a partial
tangential collapse, in the following sense.

\begin{defn}[Tangential collapse]\label{d:collapse}
Fix a measure~$\mu$ on a smoothly stratified metric space~$\MM$, and
let~$\TT$ be a conical
(Definition~\ref{d:log-map}) smoothly stratified metric space.  A
\emph{partial tangential collapse} of~$\mu$ with target~$\TT$ is a map
$\LL: \Tmu \to \TT$ such that
\begin{enumerate}
\item\label{i:pushforward-mean}%
$\LL\bigl(\log_\bmu (\bmu)\bigr) = 0$ is the Fr\'echet mean of the
pushforward
$\LL_\sharp\hmu$, where $\hmu = (\log_\bmu)_\sharp\mu$;
\item\label{i:injective}%
$\LL$ is injective on the closure~$\oCmu$ of the fluctuating
cone~$\Cmu \subseteq \Tmu$;
\item\label{i:partial-isometry}%
for any fluctuating vector $U \in \Cmu$ and any tangent vector $V \in
\Tmu$,
$$
  \<U,V\>_\bmu = \bigl\<\LL(U),\LL(V)\bigr\>_{\LL(\bmu)}.
$$
\item\label{i:homogeneous}%
$\LL$ is \emph{homogeneous}: $\LL(tV) = t\LL(V)$ for all real~$t \geq
0$ and~$V \in \Tmu$; and
\item\label{i:continuous}%
$\LL$ is continuous.
\end{enumerate}
A~\emph{(full) tangential collapse} of~$\mu$
is a partial collapse $\LL: \Tmu \to\nolinebreak \RR^m$ whose target
is a vector space.
\end{defn}

\begin{remark}\label{r:isometry}
Angle preservation for $V \in \Cmu$ in
Definition~\ref{d:collapse}.\ref{i:partial-isometry} and continuity
means the restriction of~$\LL$ from $\oCmu$ to its image makes the map
$\oCmu \to \LL(\Cmu)$ an~isometry.
\end{remark}

To reach a full tangential collapse, composite
d\'evissage~$\LL_\circ$ must be followed by terminal projection to
squash the singular cone $\vT_{Z_k}\MM_k$ onto the (automatially
Euclidean) tangent space to the stratum containing the terminal
Fr\'echet mean.

\begin{defn}\label{d:terminal-projection}
Given a d\'evissage process, the \emph{terminal projection} is the
geodesic projection of~$T_{\bnui}\MM_\infty$ onto the terminal tangent
vector space~$\Ti = T_{\bnui}R_\infty$, namely
\begin{equation}\label{eq:def_special_projection}
\begin{split}
   \bPi: T_{\bnui}\MM_\infty & \to \Ti
\\
                           W &\mapsto \argmin_{Z\in\Ti} \dd_\infty(Z,W).
\end{split}
\end{equation}
\end{defn}

\begin{remark}\label{r:trivial-P_k}
Terminal projection~$\bPi$ restricts to the identity on~$\Ti$.
Therefore $(\bPi \circ \LL_\circ)|_{\Cmu}$ is an isometry and $\bPi
\circ \LL_\circ(\bmu)$ is the Fr\'echet mean of $(\bPi \circ
\LL_\circ)_\sharp\hmu$.  When the last d\'evissage step moves to a top
stratum, $\bPi$ is the identity map on~$\Ti$ because~$R_\infty$ is
then a top stratum, so there are no tangent vectors outside
of~$T_{\bnui}R_\infty$; thus $\bPi\circ\LL_\circ = \LL_\circ$ in that
case.  See also Corollary~\ref{c:collapse}, which implies that the
resolution constructed here has slightly stronger properties in that
case.
\end{remark}

\begin{lemma}\label{l:projection-is-continuous}
The terminal projection~$\bPi$ is continuous.
\end{lemma}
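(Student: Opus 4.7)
The plan is to recognize the terminal projection as a standard convex projection in a $\CAT(0)$ setting and invoke the well-known contraction property of such projections. First I would assemble the ingredients needed to view $\bPi$ as a $\CAT(0)$ projection: by Lemma~\ref{l:limit-log-preserve-npc}, $T_{\bnui}\MM_\infty$ is a $\CAT(0)$ space, and by Proposition~\ref{p:conical-metric} applied iteratively through d\'evissage it is smoothly stratified. The target $\Ti = T_{\bnui}R_\infty$ is by definition the tangent space at $\bnui$ of the manifold stratum $R_\infty$ containing $\bnui$, so by Definition~\ref{d:stratified-space}.\ref{i:manifold-strata} it is a Euclidean vector space embedded in $T_{\bnui}\MM_\infty$; flatness of triangles from the apex (Lemma~\ref{l:flat-triangle}) guarantees that the conical metric restricts on $\Ti$ to the standard Euclidean metric and that $\Ti$ is a closed, geodesically convex subcone of $T_{\bnui}\MM_\infty$.

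Next I would invoke the standard fact (e.g.~\cite[Proposition~II.2.4]{bridson2013metric}) that in a complete $\CAT(0)$ space, the nearest-point projection onto any closed geodesically convex subset exists, is unique, and is $1$-Lipschitz. The argmin in \eqref{eq:def_special_projection} is therefore well-defined and satisfies
$$
  \dd_\infty\bigl(\bPi(W),\bPi(W')\bigr) \leq \dd_\infty(W,W')
  \text{ for all } W, W' \in T_{\bnui}\MM_\infty,
$$
which is continuity (indeed Lipschitz continuity) of~$\bPi$.

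The main thing to check carefully is the claim that $\Ti$ sits in $T_{\bnui}\MM_\infty$ as a closed geodesically convex subcone with the inherited Euclidean structure. This is exactly the geometric content already exploited in the proof of Lemma~\ref{l:decrease-inner-product}, where the same projection $\bP$ onto the tangent-vector space of the stratum was shown to weakly decrease inner products using flatness of apex triangles (Lemma~\ref{l:flat-triangle}) and $\CAT(0)$ geometry; in particular that argument tacitly assumes (and uses) that the argmin is well-defined. Thus continuity of~$\bPi$ follows from the same considerations, and no new work beyond citing the standard $\CAT(0)$ projection theorem is required.
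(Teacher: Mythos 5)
Your proposal is correct and matches the paper's proof, which is the one-line observation that convex projection in a $\CAT(0)$ space contracts, citing the same \cite[Proposition~II.2.4]{bridson2013metric}. Your additional verification that $\Ti$ is a closed geodesically convex subcone of the $\CAT(0)$ cone $T_{\bnui}\MM_\infty$ is a reasonable elaboration of hypotheses the paper leaves implicit, but the core argument is identical.
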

\begin{proof}
Convex projection in a $\CAT(0)$ space contracts
\cite[Proposition~II.2.4]{bridson2013metric}.
\end{proof}

\subsection{Constructing tangential collapse}\label{b:construction}
\mbox{}\medskip

\noindent
Throughout this subsection, fix notation as in
Definitions~\ref{d:dévissage} and~\ref{d:terminal-projection}.

\begin{prop}\label{p:ppt-L-v}
Fix a localized measure~$\mu$ on a smoothly stratified metric
space~$\MM$ and a d\'evissage that terminates at step~$k$.  For $i =
0, \dots, k$,
\begin{enumerate}
\item\label{i:hypotheses}%
$\nu_i$ is localized on the smoothly stratified $\CAT(0)$
space~$\MM_i$;

\item\label{i:codimension}%
$\LL_i(\bnu_i) = \bnu_{i+1}$ and the singularity at~$\bnu_{i+1}$ has
strictly lower codimension than~at~$\bnu_i$;

\item\label{i:isometry}%
$\LL_i$ isometrically maps $\fc_i$ to $\LL_i(\fc_i)$;

\item\label{i:fluctuating-cone}%
the terminal tangent space $\Ti$ is a real vector space;

\item\label{i:inductive-homogeneity}%
$\LL_i$ is homogeneous;

\item\label{i:inductive-continuity}%
$\LL_i$ is continuous; and

\item\label{i:angle-preserved}%
for all $U \in \Cmu$ and all $V \in \Tmu$,
$$
  \bigl\<\LL_i(U), \LL_i(V)\bigr\> = \<U,V\>.
$$
\end{enumerate}
\end{prop}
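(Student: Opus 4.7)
The plan is strong induction on $i \in \{0, 1, \dots, k\}$, verifying all seven items at each index. Most items are near-immediate repackagings of prior results. In the base case $i=0$ the relevant ravel is $\{\Tmu, \hmu, Z_0\}$, where $\Tmu$ is smoothly stratified by Proposition~\ref{p:conical-metric} and $\CAT(0)$ by Lemma~\ref{l:limit-log-preserve-npc}, the pushforward $\hmu$ has Fr\'echet mean $\bmu$ by Lemma~\ref{l:mean-of-hmu}, and Example~\ref{e:localized} then certifies that $\hmu$ is localized via its ``$\CAT(0)$'' clause. The inductive step feeds the ravel $\{\MM_i, \nu_i, Z_i\}$---which by hypothesis is a localized measure on a smoothly stratified $\CAT(0)$ space---into Proposition~\ref{p:dévissage-initial-step} to obtain item~(1) and the fluctuating-cone isometry half of~(3); item~(2) combines Lemma~\ref{l:codim-decrease} with the Fr\'echet mean equality of Proposition~\ref{p:dévissage-initial-step}. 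Items~(4)--(6) are structural: $\Ti = T_{\bnui}R_\infty$ is a real vector space because $R_\infty$ is a smooth manifold stratum (Definition~\ref{d:stratified-space}.\ref{i:manifold-strata}); homogeneity is the formula $\LL_i(tV) = t V_{Z_i}$ of Definition~\ref{d:limit-log}; and continuity holds because an angle-contracting (Proposition~\ref{p:limit-log-contracts}), radially length-preserving map is automatically continuous in the conical metric of Definition~\ref{d:conical-metric}.

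The genuine work lies in item~(7), which asserts mixed inner-product preservation between a fluctuating vector $U$ and an arbitrary tangent vector $V$. Since the triangles $\triangle\,\bnu_i\,U\,V$ and $\triangle\,\bnu_{i+1}\,(\LL_i U)\,(\LL_i V)$ are both flat by Lemma~\ref{l:flat-triangle} and $\LL_i$ preserves the radial lengths $\|U\|$ and $\|V\|$, the inner-product identity $\langle\LL_i U,\LL_i V\rangle = \langle U,V\rangle$ is equivalent to the distance identity $\dd_{i+1}(\LL_i U,\LL_i V) = \dd_i(U,V)$. Proposition~\ref{p:limit-log-contracts} supplies ``$\leq$''. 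For the matching ``$\geq$'' I would apply Theorem~\ref{t:isometry-limit-log} to the geodesically convex subcone $\KK \subseteq T_{\bnu_i}\MM_i$ generated by $\fc_i$ together with the single ray through $V$: Proposition~\ref{p:fluctuating-cone_small} implies that $\fc_i$ contains at most one ray at angle $\pi$ with $Z_i$, and a direct geometric argument based on Corollary~\ref{c:hull-preserved-by-limit-log}---treating separately the exceptional case $\angle(V,Z_i) = \pi$ via Lemma~\ref{l:inj-of_limit-log-on_Cmu_1}---shows that adjoining the single extra ray through $V$ does not introduce a second antipodal direction with respect to $Z_i$.

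The main obstacle is precisely this antipodal-ray count for the enlarged hull $\KK$. Once it is in hand, Theorem~\ref{t:isometry-limit-log} makes $\LL_i|_\KK$ an isometry, which yields the required distance preservation between $U \in \fc_i$ and $V \in T_{\bnu_i}\MM_i$, completes item~(7), and closes the induction.
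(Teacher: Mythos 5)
Your treatment of items (1)--(6) matches the paper's (your route to item (3) via Proposition~\ref{p:dévissage-initial-step} applied to the ravel at step~$i$ is fine). The gap is in item (7), and it is not a deferrable technicality. You want to apply Theorem~\ref{t:isometry-limit-log} to the cone $\KK = \hull\bigl(\fc_i \cup \RR_+V\bigr)$ and you assert that adjoining the single ray through an \emph{arbitrary} $V$ cannot create a second direction antipodal to $Z_i$. That assertion is false. Take $\MM$ to be a $3$-spider with legs $A,B,C$ and $\mu$ supported on legs $A$ and $B$ with equal first moments, so $\bmu$ is the origin, $\Cmu = A \cup B$, and $Z_0$ points along $A$; for $V$ a unit vector on leg $C$, the cone $\hull\bigl(\Cmu \cup \RR_+V\bigr)$ is the whole spider and contains two distinct rays ($B$ and $C$) at angle $\pi$ from $Z_0$. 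Theorem~\ref{t:isometry-limit-log} is then inapplicable, and no isometry on $\KK$ exists: $\LL_{Z_0}$ sends both $B$ and $C$ to $-\LL_{Z_0}(A)$ in $\vT_{Z_0}\MM \cong \RR$, collapsing $\angle(B,C) = \pi$ to~$0$ (this is exactly the collapse of an open book onto one page described in Remark~\ref{r:miracle}). The same example shows that a two-sided distance identity between $\LL_i U$ and $\LL_i V$ for \emph{every} $V \in \Tmu$ is too much to hope for when $V$ lies outside $\hull\mu$; the content of item (7) that is actually produced and used downstream (Proposition~\ref{p:ppt-LL-confined}) concerns $V$ ranging over the support of~$\hmu$, which your enlarged-hull strategy does not exploit.

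The paper's proof of item (7) is variational rather than metric, and the non-fluctuating vector enters only through the measure. Writing $\nu_1 = (\LL_0)_\sharp\hmu$, Proposition~\ref{p:limit-log-preserve-mean} says $\LL_0(\bmu)$ is the Fr\'echet mean of~$\nu_1$, so $\nabla_{\!\bmu_{Z_0}}F_{\nu_1}\bigl(\LL_0(U)\bigr) \geq 0$. On the other hand, the contraction $\<U,V\> \leq \bigl\<\LL_0(U),\LL_0(V)\bigr\>$ from Proposition~\ref{p:limit-log-contracts}, integrated against~$\hmu$ via Corollary~\ref{c:nablamu(F)}, gives $\nabla_{\!\bmu_{Z_0}}F_{\nu_1}\bigl(\LL_0(U)\bigr) \leq \nablabmu F_\mu(U) = 0$ for $U \in \Cmu$. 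The two directional derivatives therefore coincide; since they are integrals against $\hmu$ of pointwise-ordered integrands, the integrands agree $\hmu$-almost everywhere, which is the inner-product identity. The one-sided inequality you already extracted from Proposition~\ref{p:limit-log-contracts} is exactly half of this sandwich; replacing the enlarged-hull isometry by the mean-preservation bound closes the argument.
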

\begin{proof}
Claim~\ref{i:hypotheses} is by Propositions~\ref{p:conical-metric}
and~\ref{p:dévissage-initial-step}.  Claim~\ref{i:codimension} is
Lemma~\ref{l:codim-decrease}.
Claim~\ref{i:isometry} follows from Claim~\ref{i:angle-preserved}.
Claim~\ref{i:fluctuating-cone} is by Definition~\ref{d:dévissage}.
Claim~\ref{i:inductive-homogeneity} is by Definition~\ref{d:limit-log}.
Claim~\ref{i:inductive-continuity} is by
Claim~\ref{i:inductive-homogeneity} and
Definition~\ref{d:conical-metric} of conical metric, using contraction
in~Proposition~\ref{p:limit-log-contracts}.

For Claim~\ref{i:angle-preserved}, let $V$ be any element in $\Tmu$.
Given the resolving direction~$Z_0$ of~$\mu$,
Proposition~\ref{p:limit-log-preserve-mean} says that $\bmu_{Z_0} =
\LL_{Z_0}(\bmu)$ is the Fr\'echet mean of $\nu_1 =
(\LL_{Z_0})_\sharp\hmu$, so
\begin{equation}\label{eq:proof:de-singularity_1}
  \nabla_{\!\bmu_{Z_0}}F_{\nu_0}(\LL_0(V)) \geq 0
  \text{ for all }V \in \Tmu
\end{equation}
because Fr\'echet means minimize Fr\'echet functions locally.  On the
other hand, by contraction under limit log maps in
Proposition~\ref{p:limit-log-contracts},
\begin{equation}\label{eq:proof:de-singularity_2}
  \<U,V\> \leq \bigl\<\LL_{0}(U),\LL_{0}(V)\bigr\>,
\end{equation}
which can only decrease gradients.  Hence for all $U \in \Cmu$
\begin{equation}\label{eq:proof:de-singularity_3}
  \nabla_{\!\bmu_{Z_0}}F_{\nu_0}\bigl(\LL_0(U)\bigr)
  \leq
  \nabla_{\!\bmu}F_\mu(U)
  =
  0,
\end{equation}
by Definition~\ref{d:fluctuating-cone} of fluctuating cone.  Combining
Eqs.~\eqref{eq:proof:de-singularity_1},~\eqref{eq:proof:de-singularity_2},
and~\eqref{eq:proof:de-singularity_3} yields that for $U \in
\Cmu$ and $V \in \Tmu$,
$$
  \bigl\<\LL_{Z_0}(U),\LL_{Z_0}(V)\bigr\> = \<U,V\>.
$$
Thus, Claim~\ref{i:angle-preserved} is satisfied for $i = 0$ and, by
induction, for all~$i$.
\end{proof}

\begin{cor}\label{c:collapse}
Given a localized measure~$\mu$ on a smoothly stratified metric
space~$\MM$, the composite d\'evissage $\LL_\circ$ in
Definition~\ref{d:composite-devissage} is a partial tangential
collapse of~$\mu$ with target~$\MM_\infty$.
\end{cor}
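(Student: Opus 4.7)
The plan is to verify each of the five defining properties of Definition~\ref{d:collapse} for $\LL_\circ = \LL_k \circ \cdots \circ \LL_0$ by chaining together the corresponding claims of Proposition~\ref{p:ppt-L-v}. First, the target $\MM_\infty = \vT_{Z_k}\MM_k$ is the tangent cone of a $\CAT(0)$ smoothly stratified space (by iterating Proposition~\ref{p:conical-metric} with Proposition~\ref{p:ppt-L-v}.\ref{i:hypotheses}), and is thus itself a conical smoothly stratified metric space, as required. Property~\ref{i:pushforward-mean} (Fr\'echet mean preservation) is then obtained by iterating the equality $\LL_i(\bnu_i) = \bnu_{i+1}$ from Proposition~\ref{p:ppt-L-v}.\ref{i:codimension} together with the fact that $\bnu_{i+1}$ is the Fr\'echet mean of $\nu_{i+1}$ by Proposition~\ref{p:dévissage-initial-step}.\ref{i:mean}, yielding $\LL_\circ(\bmu) = \bnu_\infty$ as the Fr\'echet mean of $(\LL_\circ)_\sharp\hmu = \nu_\infty$. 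Properties~\ref{i:homogeneous} and~\ref{i:continuous} transfer to the composition trivially from claims~\ref{i:inductive-homogeneity} and~\ref{i:inductive-continuity} of Proposition~\ref{p:ppt-L-v}, since compositions of homogeneous (respectively continuous) maps are again homogeneous (respectively continuous).

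Properties~\ref{i:injective} and~\ref{i:partial-isometry} require more care. For the angle-preservation property~\ref{i:partial-isometry} I would induct on the number of d\'evissage steps. The base case is Proposition~\ref{p:ppt-L-v}.\ref{i:angle-preserved} at level~$0$. For the inductive step, the crucial input is that $(\LL_{i-1} \circ \cdots \circ \LL_0)(\Cmu) \subseteq \fc_i$, obtained by iterating $\LL_j(\fc_j) \subseteq \fc_{j+1}$ from Proposition~\ref{p:dévissage-initial-step}.\ref{i:Cmuz}. This containment allows Proposition~\ref{p:ppt-L-v}.\ref{i:angle-preserved} to be invoked at level~$i$ with $(\LL_{i-1} \circ \cdots \circ \LL_0)(U)$ playing the role of a fluctuating vector and $(\LL_{i-1} \circ \cdots \circ \LL_0)(V)$ playing the role of an arbitrary tangent vector, extending the identity one step further. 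Injectivity on $\oCmu$ (property~\ref{i:injective}) then follows because Proposition~\ref{p:ppt-L-v}.\ref{i:isometry} makes each $\LL_i$ an isometry on $\fc_i$; the composition is therefore an isometry on $\Cmu$, and continuity extends this to an isometry on the closure $\oCmu$, preserving injectivity.

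The main obstacle, modest but essential, is the bookkeeping in the angle-preservation induction: one must track that each partial composition sends $\Cmu$ into the \emph{current} fluctuating cone $\fc_i$ so that the level-$i$ angle identity can be applied to the image of a generic $U \in \Cmu$ paired with the image of a generic $V \in \Tmu$. Since Proposition~\ref{p:dévissage-initial-step}.\ref{i:Cmuz} supplies the requisite inclusion at each stage, the induction closes cleanly, and the telescoping chain $\<U,V\> = \<\LL_0(U), \LL_0(V)\> = \cdots = \<\LL_\circ(U), \LL_\circ(V)\>$ delivers property~\ref{i:partial-isometry}, completing the verification.
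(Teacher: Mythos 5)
Your proposal is correct and follows the same route as the paper, which simply cites Proposition~\ref{p:ppt-L-v} as giving all five properties of Definition~\ref{d:collapse} directly; the induction and bookkeeping you spell out (tracking that each partial composite sends $\Cmu$ into $\fc_i$ via Proposition~\ref{p:dévissage-initial-step}.\ref{i:Cmuz}) is exactly what the paper's own inductive proof of Proposition~\ref{p:ppt-L-v}.\ref{i:angle-preserved} relies on.
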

\begin{proof}
By Definition~\ref{d:collapse}, this is a direct consequence of
Proposition~\ref{p:ppt-L-v}.
\end{proof}

\begin{prop}\label{p:ppt-LL-confined}
Fix a localized measure~$\mu$ on a smoothly stratified metric
space~$\MM$ and a d\'evissage (Definition~\ref{d:dévissage}).  Write
\begin{itemize}
\item%
$\OO = \bnui = \bPi(\bnui)$, the origin of the terminal tangent vector
space~$\Ti$,
\item%
$\muL = (\bPi)_\sharp\nu_\infty$, the \emph{resolved measure}
on~$\Ti$, and
\item%
$\LL = \bPi\circ \LL_\circ$, the \emph{terminal collapse}
\end{itemize}
obtained by composing the terminal projection~$\bPi$ in
Definition~\ref{d:terminal-projection} with the composite d\'evissage
$\LL_\circ$ in Definition~\ref{d:composite-devissage}.  Then
$$
  \muL
  =
  \LL_\sharp\hmu
  =
  (\LL \circ \log_\bmu)_\sharp \mu
$$
is the pushforward of $\hmu = (\log_\bmu)_\sharp\mu$ under~$\LL$, and
$\OO$ is the Fr\'echet mean of~$\muL$.  In addition, for any $V \in
\Cmu$ and any~$X \in \Tmu$,
\begin{equation}\label{eq:confined}
  \<X,V\>_\bmu = \bigl\<\LL(X), \LL(V)\bigr\>{}_\OO.
\end{equation}
\end{prop}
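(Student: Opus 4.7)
The plan is to verify the three assertions of the Proposition in order, using iterated application of Proposition~\ref{p:ppt-L-v} as the workhorse. That Proposition guarantees that each $\nu_i$ is localized on a smoothly stratified $\CAT(0)$ space, that $\bnu_{i+1} = \LL_i(\bnu_i)$ is its Fréchet mean, that $\LL_i$ restricted to $\fc_i$ is an isometry onto its image, and that $\LL_i(\fc_i) \subseteq \fc_{i+1}$ via Corollary~\ref{c:LL-preserves-Cmu}; these facts make the whole dévissage internally consistent and drive every calculation below.

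The pushforward identity is purely formal bookkeeping: composing $\nu_{i+1} = (\LL_i)_\sharp\nu_i$ from Definition~\ref{d:step-devissage} yields $\nu_\infty = (\LL_\circ)_\sharp \hmu$, and post-composition with $\bPi$ gives $\muL = \LL_\sharp \hmu = (\LL\circ\log_\bmu)_\sharp\mu$. For the Fréchet-mean claim, the strategy is to show $\nabla_\OO F_{\muL}(V) = 0$ for every $V \in \Ti$. Applying Proposition~\ref{p:nicer-fluctuating-cone-after-limit-log} to the final dévissage step (with $Z = Z_k$ and with the resolving stratum of $\MM_k$ recognized as $R_\infty$ after limit log) yields $\Ti \subseteq E_{\nu_\infty}$, so $\nabla_{\bnui} F_{\nu_\infty}(V) = 0$ for all $V \in \Ti$. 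Expressing both directional derivatives as integrals via Corollary~\ref{c:nablamu(F)} and substituting the pointwise inequality $\langle \bPi(W'),V\rangle \geq \langle W',V\rangle$ from Lemma~\ref{l:decrease-inner-product} gives $\nabla_\OO F_{\muL}(V) \leq \nabla_{\bnui} F_{\nu_\infty}(V) = 0$. Replacing $V$ by $-V \in \Ti$, legal because $\Ti$ is a vector space, furnishes the reverse inequality through linearity of the Fréchet derivative on Euclidean space, so $\nabla_\OO F_{\muL}$ vanishes on $\Ti$ and $\OO$ is the Fréchet mean of $\muL$.

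For \eqref{eq:confined}, iterating the angle-preservation clause Proposition~\ref{p:ppt-L-v}.\ref{i:angle-preserved} along the dévissage gives $\langle \LL_\circ(X), \LL_\circ(V)\rangle_{\bnui} = \langle X,V\rangle_\bmu$ for $V \in \Cmu$ and $X \in \Tmu$. Because the terminal ravel is resolved (Definition~\ref{d:resolved}), $\LL_\circ(V) \in \fc_\infty \subseteq \Ti$, so $\bPi$ fixes $\LL_\circ(V)$ and hence $\LL(V) = \LL_\circ(V)$. Applying Lemma~\ref{l:decrease-inner-product} to the pairs $\bigl(\LL_\circ(X),\pm\LL_\circ(V)\bigr)$---both legitimate because $\pm\LL_\circ(V) \in \Ti$---traps $\langle \bPi(\LL_\circ(X)), \LL_\circ(V)\rangle$ equal to $\langle \LL_\circ(X), \LL_\circ(V)\rangle = \langle X, V\rangle_\bmu$, which is~\eqref{eq:confined}. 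The main obstacle is that Lemma~\ref{l:decrease-inner-product} supplies only one-sided contraction of inner products, so a single invocation never gives equality; both the Fréchet-mean argument and the angle argument upgrade this to an equality via the linear symmetry $V \mapsto -V$ available on the vector space $\Ti$---which is precisely why terminal projection targets the stratum tangent space rather than some larger geodesically convex subcone.
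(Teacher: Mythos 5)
Your bookkeeping of the pushforward and your proof that $\OO$ is the Fr\'echet mean of~$\muL$ follow the paper's own route essentially verbatim: express $\nablaO F_{\muL}(Y)$ as an integral via Corollary~\ref{c:nablamu(F)}, dominate it by $\nablaO F_{\nu_\infty}(Y)\leq 0$ using Lemma~\ref{l:decrease-inner-product} and Proposition~\ref{p:nicer-fluctuating-cone-after-limit-log}, then exploit linearity of the gradient on the Euclidean space~$\Ti$ (replace $Y$ by~$-Y$) to upgrade the inequality to an equality. That part is correct and matches the paper.

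The proof of~\eqref{eq:confined} is where you deviate, and the deviation contains a genuine gap. Your ``trapping'' step applies Lemma~\ref{l:decrease-inner-product} to the pairs $\bigl(\LL_\circ(X),\pm\LL_\circ(V)\bigr)$ and tacitly converts the resulting bound $\bigl\<\bPi(\LL_\circ(X)),-\LL_\circ(V)\bigr\>\geq\bigl\<\LL_\circ(X),-\LL_\circ(V)\bigr\>$ into the reverse bound $\bigl\<\bPi(\LL_\circ(X)),\LL_\circ(V)\bigr\>\leq\bigl\<\LL_\circ(X),\LL_\circ(V)\bigr\>$. That conversion needs $\<W,-Y\>=-\<W,Y\>$ with $W=\LL_\circ(X)$ an \emph{arbitrary} vector of the terminal tangent cone and $Y=\LL_\circ(V)\in\Ti$, i.e., $\angle(W,Y)+\angle(W,-Y)=\pi$. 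Only the inequality $\geq\pi$ is available in general (Lemma~\ref{l:sum_adj_angles}); the equality is proved in the paper only for the single vector $\LL_Z Z$ (Lemma~\ref{l:sum=pi}), and it can genuinely fail in a $\CAT(0)$ cone--consider the cone over a circle of circumference greater than~$2\pi$ with a geodesic line through the apex as a stratum: a direction $W$ on the far arc has $\angle(W,Y)=\angle(W,-Y)=\pi$. The paper closes this hole by a different mechanism: having shown $\nablaO F_{\muL}(V)=0$, it notes that the integrated inequality coming from Lemma~\ref{l:decrease-inner-product} must then be an equality, and since that inequality holds pointwise in the integrand, $\<\bPi(X),V\>=\<X,V\>$ for $\nu_\infty$-almost every~$X$; continuity of~$\bPi$ and of inner products extends this, and Corollary~\ref{c:collapse} transfers the identity back to~$\Tmu$. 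To repair your argument, either adopt that equality-case reasoning or supply a proof that $Y\mapsto\<W,Y\>$ is odd on~$\Ti$ for every~$W$ in the terminal tangent cone.
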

\begin{proof}
Since $T_\OO R_\infty = \Ti$ is Euclidean, the Fr\'echet
function~$F_{\muL}$ of~$\muL$ on~$T_\OO R_\infty$ is differentiable
and convex.  Furthermore, if $\bnu$ is the Fr\'echet mean of~$\muL$,
then
$$
  F_{\muL}(x) - F_{\muL}(\bnu) = \frac 12\dd_\OO(x,\bnu)^2
$$
for any $x \in T_{\bnu} R_\infty$.  To show that $\OO$
minimizes~$F_{\muL}$, it suffices to show that
\begin{equation}\label{eq:confined:1}
  \nablaO F_{\muL}(Y) \leq 0
\end{equation}
for any tangent vector $Y \in \Ti$ to the terminal stratum.  Indeed,
as $\nablaO F_{\muL}(Y)$ is linear in~$V\hspace{-.2ex}$,
\eqref{eq:confined:1}~implies that $\nablaO F_{\muL}(Y) = 0$ for all
$Y \in \Ti$.  Thus $\OO$ is a stationary point of~$F_{\muL}$.
Convexity of~$F_{\muL}$ implies that $\OO$ is the minimizer
of~$F_{\muL}$.

To show \eqref{eq:confined:1}, observe that by
Corollary~\ref{c:nablamu(F)},
\begin{align}
\label{eq:confined:2}
  \nablaO F_{\muL}(Y)
  =
  -\!\!\int_{T_\OO R_\infty} \<X,Y\>\muL(dX)
  &=
  -\!\!\int_{T_\OO\MM_\infty} \bigl\<\bPi(X),Y\bigr\> \nu_\infty(dX).
\\\label{eq:confined:3}
  &\leq
  -\!\!\int_{T_\OO\MM_\infty} \<X,Y\> \nu_\infty(dX)
  \text{ by Lemma~\ref{l:decrease-inner-product}}
\\[1ex]\notag
  &=
  \nablaO F_{\nu_\infty}(Y)
  \leq
  0,
\end{align}
where $\leq 0$ is because $\OO$ minimizes $F_{\nu_\infty}$ and $Y \in
\Ti \subseteq \Emu$ by
Proposition~\ref{p:nicer-fluctuating-cone-after-limit-log}.

Differentiability of~$F_{\muL}$ (observed at the proof's start)
and~\eqref{eq:confined:2} yield $\nablaO F_{\muL}(V) = 0$, which in
particular implies equality in \eqref{eq:confined:3}.  Together these
two equalities yield
$$
  \bigl\<\bPi(X),V\bigr\>
  =
  \<X,V\>
$$
for $\nu_\infty$-almost all~$X$, and hence for all~$X$ by continuity
of~$\bPi$ and of inner products in
Lemmas~\ref{l:projection-is-continuous}
and~\ref{l:inner-product-is-continuous}.  Moreover, as $\bPi(V) =
V\hspace{-.2ex}$, this can be rewritten
$$
  \bigl\<\bPi(X),\bPi(V)\bigr\>
  =
  \<X,V\>
$$
for all~$X$.  {}From here, \eqref{eq:confined} is a consequence of
Corollary~\ref{c:collapse}.
\end{proof}

Recall the summary of definitions regarding measures and spaces from
the paragraph before Section~\ref{b:dévissage} and additional
definitions concerning d\'evissage in Definitions~\ref{d:dévissage},
\ref{d:step-devissage}, and~\ref{d:composite-devissage}, and terminal
projection in~\ref{d:terminal-projection}.

\begin{thm}\label{t:collapse}
Fix a smoothly stratified metric space~$\MM$ and on it a localized
probability measure~$\mu$ with Fr\'echet mean~$\bmu$.  The terminal
collapse $\LL = \bPi \circ \LL_\circ: \Tmu \to\nolinebreak \Ti$ for
any d\'evissage is a tangential collapse of~$\mu$ as in
Definition~\ref{d:collapse}.
\end{thm}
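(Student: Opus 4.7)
My plan is to verify the five conditions of Definition~\ref{d:collapse} one at a time, since essentially all of the substantive work has already been carried out in Proposition~\ref{p:ppt-L-v} and Proposition~\ref{p:ppt-LL-confined}. The target being a vector space is immediate: by Proposition~\ref{p:ppt-L-v}.\ref{i:fluctuating-cone}, the terminal tangent space $\Ti$ is a real vector space, so $\LL = \bPi \circ \LL_\circ$ lands in some $\RR^m$.

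Next I would invoke Proposition~\ref{p:ppt-LL-confined} directly for conditions \ref{i:pushforward-mean} and \ref{i:partial-isometry}. That proposition states that $\OO = \bnui$ is the Fr\'echet mean of $\muL = \LL_\sharp\hmu$ and that $\<X,V\>_\bmu = \<\LL(X),\LL(V)\>_\OO$ for every $V \in \Cmu$ and $X \in \Tmu$, identifying $\log_\bmu(\bmu)$ with the apex of $\Tmu$. For the homogeneity condition \ref{i:homogeneous}, each limit log $\LL_i$ is homogeneous by Proposition~\ref{p:ppt-L-v}.\ref{i:inductive-homogeneity}, and the terminal projection $\bPi$ is homogeneous since nearest-point projection onto a linear subspace of a Euclidean ambient space commutes with nonnegative scalar multiplication; compositions of homogeneous maps are homogeneous. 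Continuity \ref{i:continuous} follows analogously: each $\LL_i$ is continuous by Proposition~\ref{p:ppt-L-v}.\ref{i:inductive-continuity} and $\bPi$ is continuous by Lemma~\ref{l:projection-is-continuous}.

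The only condition requiring a small additional argument is injectivity on $\oCmu$ (condition \ref{i:injective}). Here I would specialize the inner product identity from Proposition~\ref{p:ppt-LL-confined} to pairs $U,V \in \Cmu$ to obtain $\<U,V\>_\bmu = \<\LL(U),\LL(V)\>_\OO$; combined with the formula $\dd_\bmu(U,V)^2 = \|U\|^2 + \|V\|^2 - 2\<U,V\>_\bmu$ from Definition~\ref{d:conical-metric} and the corresponding formula on the Euclidean space $\Ti$, this upgrades to an honest isometry $\LL|_{\Cmu}: \Cmu \to \LL(\Cmu)$. To transfer injectivity to the closure, I would pick $U,V \in \oCmu$ with $\LL(U) = \LL(V)$, approximate by sequences $U_n, V_n \in \Cmu$, and use continuity of $\LL$ (already established) together with continuity of the inner product (Lemma~\ref{l:inner-product-is-continuous}) to conclude $\dd_\bmu(U_n,V_n) = \dd_\OO(\LL(U_n),\LL(V_n)) \to 0$, whence $U = V$.

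Given Propositions~\ref{p:ppt-L-v} and~\ref{p:ppt-LL-confined}, there is no genuine obstacle in the theorem itself; it is a bookkeeping exercise that collates the previously proved properties of the individual d\'evissage steps and of the terminal projection. The subtler mathematical content lies upstream, particularly in Proposition~\ref{p:ppt-LL-confined}, where preservation of the Fr\'echet mean under the terminal projection and preservation of inner products against fluctuating vectors both had to be extracted from the contraction properties of limit logs combined with Lemma~\ref{l:decrease-inner-product}.
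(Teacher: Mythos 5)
Your proposal follows the paper's proof in essentially the same way: both reduce the theorem to a collation of Proposition~\ref{p:ppt-L-v} and Proposition~\ref{p:ppt-LL-confined}, with only injectivity on~$\oCmu$ requiring a limiting argument. Your closure argument—deduce an isometry on~$\Cmu$ from the inner product identity and the conical law of cosines, then pass to limits via $\dd_\bmu(U_n,V_n) = \dd_\OO\bigl(\LL(U_n),\LL(V_n)\bigr) \to 0$—is a clean and valid variant of the paper's (which instead pairs $Y$ against a sequence in~$\Cmu$ converging to~$Z$ and uses continuity of inner products).

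There is, however, one genuine omission: \emph{termination}. The theorem quantifies over ``any d\'evissage,'' and the terminal objects $\MM_\infty$, $\Ti$, $\bPi$, and hence $\LL = \bPi\circ\LL_\circ$ itself, are only defined once the d\'evissage terminates at some finite step~$k$ (Definition~\ref{d:dévissage}). Nothing in Propositions~\ref{p:ppt-L-v} or~\ref{p:ppt-LL-confined} guarantees this, and the paper explicitly lists it among the items left to prove, settling it with Lemma~\ref{l:codim-decrease}: each d\'evissage step applied to an unresolved ravel strictly decreases the codimension of the singularity at the Fr\'echet mean, and a smoothly stratified space has only finitely many strata, so the process cannot continue indefinitely. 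Without this, the map in the statement need not exist. A minor further point: your justification for homogeneity of~$\bPi$ (``nearest-point projection onto a linear subspace of a Euclidean ambient space'') mischaracterizes the ambient space $T_{\bnui}\MM_\infty$, which is a $\CAT(0)$ cone and not Euclidean in general; the correct reason, as in the paper, is that the conical metric is homogeneous under scaling and the target~$\Ti$ is a cone, so the $\argmin$ defining $\bPi$ commutes with nonnegative dilation.
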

\begin{proof}
After Corollary~\ref{c:collapse} and
Proposition~\ref{p:ppt-LL-confined}, the only things left to prove are
homogeneity and continuity of~$\LL$, injectivity on the closure
of~$\Cmu$, and that any given d\'evissage process automatically
terminates at some finite step~$k$.

For homogeneity, since $\LL_\circ$ is homogeneous by
Proposition~\ref{p:ppt-L-v}.\ref{i:inductive-homogeneity} and
Definition~\ref{d:composite-devissage}, it suffices for the terminal
projection~$\bPi$ to be homogeneous.  That follows from homogeneity of
the concial metrics involved, given that the target~$\Ti$ of the
convex projection~$\bPi$ in Definition~\ref{d:terminal-projection} is
a (vector space and hence~a) cone.

For continuity, $\LL_\circ$ is continuous because
Proposition~\ref{p:ppt-L-v}.\ref{i:inductive-continuity} asserts that
each $\LL_i$ is continuous, and $\bPi$ is continuous by
Lemma~\ref{l:projection-is-continuous}.

That $\LL$ is injective on $\Cmu$ itself is thanks to
Corollary~\ref{c:collapse} and Proposition~\ref{p:ppt-LL-confined},
which show that $\LL$ is an isometry from~$\Cmu$ to its image.
Injectivity on the closure of~$\Cmu$ then follows from continuity of
inner products in Lemma~\ref{l:inner-product-is-continuous} and
invariance of~$\LL$ under inner products with~$\Cmu$ in
Definition~\ref{d:collapse}.\ref{i:partial-isometry}: if $Y \neq Z$
lie in the closure of~$\Cmu$, then take the inner product of~$Y$ with
any sequence of vectors in~$\Cmu$ that converges to~$Z$ to conclude
that $\LL(Y) \neq \LL(Z)$.

For termination, use Lemma~\ref{l:codim-decrease}: singularity
codimension cannot decrease indefinitely since smoothly stratified
spaces have finitely many strata by
Definition~\ref{d:stratified-space}.
\end{proof}

\begin{cor}\label{c:dim}
The vector space~$\RR^m$ in any terminal collapse $\LL: \Tmu \to
\RR^m$ is the tangent space to a particular smooth stratum
containing~$\bmu$ and hence $m \leq \dim(\MM)$.
\end{cor}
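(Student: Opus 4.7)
The plan is to unwind the definitions so that $\RR^m$ manifests as a specific tangent vector space, and then to trace that tangent space back through the d\'evissage to exhibit it as the tangent space at~$\bmu$ of a particular smooth stratum of~$\MM$ whose closure contains~$\bmu$.

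First I would observe that, by Definitions~\ref{d:terminal-projection} and~\ref{d:composite-devissage}, the terminal collapse $\LL = \bPi \circ \LL_\circ$ has image inside $\Ti = T_{\bnui}R_\infty$. Proposition~\ref{p:ppt-L-v}.\ref{i:fluctuating-cone} asserts that $\Ti$ is a real vector space, canonically the tangent space at~$\bnui$ of the terminal smooth stratum~$R_\infty$ inside the terminal smoothly stratified space~$\MM_\infty$. So identifying $\RR^m$ with $\Ti$ already gives the first assertion modulo the identification of~$R_\infty$ with a stratum of~$\MM$.

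Next I would identify $R_\infty$ with a stratum of~$\MM$. The initial space $\MM_0 = \Tmu$ is smoothly stratified by Proposition~\ref{p:conical-metric}, and its strata near the apex correspond via the local exponential homeomorphism of Definition~\ref{d:stratified-space}.\ref{i:exponentiable} to strata of~$\MM$ whose closure contains~$\bmu$. At each d\'evissage step, $\MM_{i+1} = \vT_{Z_i}\MM_i$ is a limit tangent cone along a resolving direction~$Z_i$ which, by Definition~\ref{d:resolving-direction}, points to a specific stratum $S_i \subseteq \MM_i$. By the construction of the limit tangent cone via radial transport of $T_q \MM_i$ for $q \in \OO z_i$ (Proposition~\ref{p:limit-tangent-cone}), together with Proposition~\ref{p:conical-metric} applied at~$q$, the stratum of~$\MM_{i+1}$ containing the new base point $\bnu_{i+1} = \LL_i(\bnu_i)$ is canonically the tangent vector space to~$S_i$ at such a~$q$, so it has dimension~$\dim S_i$. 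Iterating this identification back through the finite d\'evissage chain, $R_\infty$, and hence $\Ti$, is naturally identified with the tangent space at~$\bmu$ of a specific stratum~$R$ of~$\MM$ whose closure contains~$\bmu$. The bound $m = \dim \Ti = \dim R \leq \dim \MM$ is then immediate.

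The main obstacle is making this strata-tracking rigorous: one must verify that each limit tangent cone step preserves the smoothly stratified structure in a way that places the relevant base strata of consecutive spaces $\MM_i$ and $\MM_{i+1}$ in bijective dimension-preserving correspondence, and that the resolving direction at each step actually points into the expected higher-dimensional base stratum. The latter is essentially the content of Lemma~\ref{l:codim-decrease}, and the former follows from iterating Proposition~\ref{p:conical-metric} together with the local exponentiability axiom in Definition~\ref{d:stratified-space}.\ref{i:exponentiable}, both of which are already in hand.
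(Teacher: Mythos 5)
Your proposal is correct and follows essentially the same route as the paper, whose proof simply observes that the target of $\LL$ is the terminal tangent vector space $\Ti = T_{\bnui}R_\infty$ from Definition~\ref{d:dévissage}. The extra work you do tracing $R_\infty$ back through the d\'evissage chain to a stratum of~$\MM$ (via Proposition~\ref{p:conical-metric}, the limit tangent cone construction, and the dimension bookkeeping in Lemma~\ref{l:codim-decrease}) is making explicit an identification the paper leaves implicit, not a different argument.
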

\begin{proof}
The target vector space of $\LL: \Tmu \to \Ti$ in
Theorem~\ref{t:collapse} is the terminal tangent vector space in
Definition~\ref{d:dévissage}.
\end{proof}

\begin{remark}\label{r:hull-muL}
Technically, Definition~\ref{d:hull} sets $\hull\muL$ equal to a
subset of the tangent space of~$\RR^m$.  Let us abuse notation and
identify $\hull\muL$ with a subset of~$\RR^m$~\mbox{itself}.
\end{remark}

\begin{remark}\label{r:not-proper}
It would be convenient, for various purposes, if~$\LL$ a proper map.
For example, it would imply that $\supp\muL = \LL(\supp\hmu)$.
However, $\LL$ involves convex projection~$\bPi$ in
Theorem~\ref{t:collapse}, so properness is often violated, such as
when components of~$\MM$ orthogonal to the strata meeting~$\Cmu$ are
crushed to the origin.  Nonetheless, although $\supp\muL$ need not
equal~$\LL(\supp\hmu)$, their hulls agree.
\end{remark}

\begin{lemma}\label{l:supp-NLmu}
If $\LL: \Tmu \to \RR^m$ is a tangential collapse of a measure~$\mu$
and $\muL = \LL_\sharp\hmu = (\LL \circ \log_\bmu)_\sharp \mu$, then
$\hull\muL = \hull\LL(\supp\hmu) \cong \RR^\ell$ is a linear
subspace~of\/~$\RR^m$.
\end{lemma}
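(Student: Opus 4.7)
The plan is to exploit the centroid condition on $\muL$ at the origin of $\RR^m$ to force the convex conical hull $T := \hull \LL(\supp\hmu)$ to be closed under negation, hence a linear subspace, and then sandwich $\hull\muL$ into equality with~$T$. First I will pin down the support of~$\muL$ as $\overline{\LL(\supp\hmu)}$: one inclusion follows because $\overline{\LL(\supp\hmu)}$ is a closed set of full $\muL$-measure, while for the reverse any $v \in \supp\hmu$ and any open $U \ni \LL(v)$ admit $\LL^{-1}(U)$ as an open neighborhood of~$v$ by continuity of~$\LL$, giving $\hmu(\LL^{-1}(U)) > 0$ and hence $\muL(U) > 0$. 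Next, because $\RR^m$ is Euclidean, Proposition~\ref{p:ppt-LL-confined} identifies $\OO$ as the Fr\'echet mean of~$\muL$, which is equivalent to the centroid condition $\int_{\RR^m} x\,\muL(dx) = 0$.

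The crux is to verify that the dual cone $T^\ast = \{\phi \in (\RR^m)^\ast : \phi \geq 0 \text{ on } T\}$ is contained in the annihilator $T^\perp = \operatorname{span}(T)^\perp$, which forces $T$ to be a linear subspace. Given $\phi \in T^\ast$, continuity gives $\phi \geq 0$ on $\overline{\LL(\supp\hmu)} = \supp\muL$, so $\phi \geq 0$ holds $\muL$-almost surely; combined with $\int \phi\,d\muL = \phi(\int x\,d\muL) = 0$, this forces $\muL(\{\phi > 0\}) = 0$. If some $v_0 \in T$ had $\phi(v_0) > 0$, then writing $v_0 = \sum c_i \LL(w_i)$ with $w_i \in \supp\hmu$ and $c_i \geq 0$ would yield some $w_j$ with $\phi(\LL(w_j)) > 0$; openness of $\{\phi > 0\}$ and continuity of~$\LL$ would then make $\LL^{-1}(\{\phi > 0\})$ a neighborhood of $w_j \in \supp\hmu$ of positive $\hmu$-measure, contradicting $\muL(\{\phi > 0\}) = 0$. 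Hence $\phi|_T \equiv 0$, so $T^\ast \subseteq T^\perp$. The bipolar relation $T^{\ast\ast} = \overline T$ together with $(T^\perp)^\ast = \operatorname{span}(T)$ then yields $\overline T = \operatorname{span}(T)$, and the relative-interior identity $\operatorname{ri}(T) = \operatorname{ri}(\overline T)$ for convex sets in $\RR^m$ upgrades this to $T = \operatorname{span}(T) \cong \RR^\ell$.

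The equality $\hull\muL = T$ is then immediate: closedness of $T$ (as a subspace) yields $T \supseteq \overline{\LL(\supp\hmu)} = \supp\muL$, whence $T \supseteq \hull\supp\muL = \hull\muL$, while the reverse $T \subseteq \hull\muL$ comes from $\LL(\supp\hmu) \subseteq \supp\muL$. The main obstacle will be the last step of the crux argument: the centroid/separation reasoning only delivers $\overline T = \operatorname{span}(T)$ a priori, so one must invoke the relative-interior coincidence for convex sets to rule out the potential pathology that $T$ is a non-closed convex cone dense in a proper subset of its affine hull.
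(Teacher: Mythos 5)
Your proof is correct, and its engine is the same as the paper's: the mean-zero (centroid) condition on $\muL$ combined with hyperplane separation/duality forces the relevant conic hull to be stable under negation, hence a linear subspace. The differences are organizational. The paper first shows $\hull\muL$ is a subspace by supposing $-V \notin \hull\muL$, producing a separating hyperplane, and deriving a contradiction from the centroid condition; it then transfers the conclusion to $\hull\LL(\supp\hmu)$ via the auxiliary Lemma~\ref{l:hull=subspace}. You instead work directly with $T = \hull\LL(\supp\hmu)$, prove $T^{\ast} \subseteq T^{\perp}$, and invoke the bipolar theorem together with the relative-interior identity $\operatorname{ri}(T) = \operatorname{ri}(\overline{T})$ to get $T = \operatorname{span}(T)$, after which $\hull\muL = T$ follows at once from closedness of~$T$. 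Your route is the more careful of the two: the separation/bipolar machinery a priori only controls the \emph{closed} conic hull, and you explicitly close the gap between ``$\overline{T}$ is a subspace'' and ``$T$ is a subspace'' --- a closure issue that the paper's contradiction argument (which separates two cones whose closures may well intersect) passes over silently and that Lemma~\ref{l:hull=subspace} only partially addresses. You also supply the verification that $\supp\muL = \overline{\LL(\supp\hmu)}$, which the paper asserts without argument; the one implicit assumption you share with the paper is that $\muL$ has a finite first moment so that the Fr\'echet mean coincides with the centroid.
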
 
\begin{proof}
As $\hull\muL$ is a convex cone in~$\RR^m$ to begin with by
Definition~\ref{d:hull}, all that's needed for $\hull\muL \cong
\RR^\ell$ is $-V \!\in \hull\muL$ for all $V \!\in \hull\muL$.
Suppose the converse, namely $-V \not\in\nolinebreak \hull\muL$ for
some $V \!\in \hull\muL$.  Apply the hyperplane separation theorem
\cite[Theorem~II.1.6]{barvinok2002} to the two convex cones that are
$\hull\muL$ and the ray generated by~$-V$ to produce a hyperplane
$H_V$ in~$\RR^m$ that separates $\hull\muL$ from~$-V$.  Note that
$\hull\muL$ is contained in the half-space in~$\RR^m$ determined
by~$H_V$ and containing~$V$.  Because $\muL$ has Fr\'echet mean~$0$ by
Definition~\ref{d:collapse}, the support of~$\muL$ is contained
in~$H_V$, which is a convex subset of~$\RR^m$.  However, this
contradicts $V \in \hull\muL$.

Given that $\hull\muL = \RR^\ell$, the equality $\hull\muL =
\hull\LL(\supp\hmu)$ is a simple general observation, recorded in
Lemma~\ref{l:hull=subspace}, about how to generate vector spaces as
cones, since the support of the pushforward~$\muL$ is the closure of
the image of~$\supp\hmu$.
\end{proof}

\begin{lemma}\label{l:hull=subspace}
Let $\cS \subseteq \RR^\ell$ be a subset with closure~$\cS'\!$.
If\/~$\hull\cS' = \RR^\ell$ then $\hull\cS = \RR^\ell$.
\end{lemma}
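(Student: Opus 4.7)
The plan is to argue by contrapositive: assume $C = \hull\cS$ is a proper convex subcone of~$\RR^\ell$ and deduce $\hull\cS' \neq \RR^\ell$, contradicting the hypothesis.

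First I would show that $\hull\cS' \subseteq \ol C$. The closure $\ol C$ of any convex cone in~$\RR^\ell$ is again a convex cone, since convex combinations and nonnegative scalar multiplications are continuous: if $x_n \to x$ and $y_n \to y$ with $x_n, y_n \in C$, then $tx_n + (1-t)y_n \in C$ converges to $tx + (1-t)y$ for $t \in [0,1]$, and likewise $\lambda x_n \to \lambda x$ for $\lambda \geq 0$. Thus $\ol C$ is a closed convex cone containing~$\cS$ and therefore containing~$\cS'$, and minimality of the convex conical hull forces $\hull\cS' \subseteq \ol C$.

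Next I would deduce $\ol C \neq \RR^\ell$ from $C \neq \RR^\ell$. This invokes the classical convex-analysis fact that for any convex subset $C \subseteq \RR^\ell$ the relative interior of~$C$ coincides with that of~$\ol C$. If $\ol C = \RR^\ell$ then the affine hull of~$C$ is all of~$\RR^\ell$, so relative interior equals ordinary interior, yielding $\mathrm{int}(C) = \mathrm{int}(\ol C) = \RR^\ell$; since the interior of a convex set lies inside the set, this would force $C = \RR^\ell$, a contradiction.

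Combining the two steps yields the lemma: if $C = \hull\cS \neq \RR^\ell$ then $\ol C \neq \RR^\ell$, and then $\hull\cS' \subseteq \ol C \neq \RR^\ell$, contradicting the hypothesis $\hull\cS' = \RR^\ell$. The main (and essentially only) non-routine ingredient is the relative-interior-versus-closure equality for convex sets. An alternative route that sidesteps that fact would be to select $\ell+1$ vectors in~$\cS'$ whose positive combinations cover~$\RR^\ell$ (possible by a Carath\'eodory-type argument, since $\hull\cS' = \RR^\ell$ implies the origin lies in the interior of a simplex with vertices in~$\cS'$), then approximate each by a sequence in~$\cS$, and use that positive spanning of~$\RR^\ell$ is an open condition on tuples of vectors to conclude that sufficiently close approximations also positively span~$\RR^\ell$.
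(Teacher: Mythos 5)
Your main argument is correct but follows a genuinely different route from the paper. The paper's proof is exactly the ``alternative route'' you sketch at the end: it fixes $\ell+1$ affinely independent vectors $\ee_0,\dots,\ee_\ell$ with $\hull(\ee_0,\dots,\ee_\ell)=\RR^\ell$, writes each $\ee_i$ as a finite conical combination of vectors in~$\cS'$, approximates each of those by elements of~$\cS$, and concludes via openness of the positive-spanning condition. Your primary contrapositive argument instead passes through general convex analysis: $\hull\cS' \subseteq \ol{\hull\cS}$ by minimality (since the closure of a convex cone is a closed convex cone containing~$\cS'$), and then a proper convex subset of~$\RR^\ell$ cannot be dense, via the standard fact $\mathrm{relint}(C)=\mathrm{relint}(\ol C)$. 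Both are valid. What your route buys is brevity and no explicit approximation bookkeeping, at the cost of citing the relative-interior theorem (e.g.\ Rockafellar); the paper's route is more self-contained and elementary, resting only on the observation that positive spanning by a finite tuple is an open condition on that tuple. One cosmetic point: since $\hull$ here denotes the smallest convex \emph{cone}, the ``convex combinations'' in your closure argument should really be conical combinations, but closedness of $\ol C$ under those follows by the same continuity reasoning, so nothing breaks.
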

\begin{proof}
Fix~$\ell$ affinely independent vectors $\ee_0,\dots,\ee_\ell$
that generate $\RR^\ell$ as a cone, so $\hull(\ee_0,\dots,\ee_\ell) =
\RR^\ell$.
By hypothesis, $\ee_i \in \hull\cS'$ for all~$i$, so each~$\ee_i$ is a
convex combination of (finitely many) vectors in~$\cS'$.  Each of the
vectors in each of the $\ell + 1$ convex combinations is a limit of
elements of~$\cS$.  The conclusion therefore holds because the
condition $\hull(\ee_0,\dots,\ee_\ell) = \RR^\ell$ is open on
$\ee_0,\dots,\ee_\ell$, meaning that any perturbation of these vectors
still yields $\hull = \RR^\ell$.
\end{proof}




\end{document}